\theoremstyle{plain}
\newtheorem{Thm}[equation]{Theorem}
\newtheorem{Cor}[equation]{Corollary}
\newtheorem{Prop}[equation]{Proposition}
\newtheorem{Lem}[equation]{Lemma}
\numberwithin{equation}{section}
\newcommand{\OO}{\operatorname{O}}
\newcommand{\GO}{\operatorname{GO}}
\newcommand{\GSO}{\operatorname{GSO}}
\newcommand{\SO}{\operatorname{SO}}
\newcommand{\PGSp}{\operatorname{PGSp}}
\newcommand{\GSp}{\operatorname{GSp}}
\newcommand{\GSpin}{\operatorname{GSpin}}
\newcommand{\Sp}{\operatorname{Sp}}
\newcommand{\Ind}{\operatorname{Ind}}
\newcommand{\Hom}{\operatorname{Hom}}
\newcommand{\GL}{\operatorname{GL}}
\newcommand{\SL}{\operatorname{SL}}
\newcommand{\C}{\mathbb C}
\newcommand{\A}{\mathbb{A}}
\newcommand{\Z}{\mathbb{Z}}
\newcommand{\R}{\mathbb{R}}
\newcommand{\bm}{\begin{multline*}}
\newcommand{\tu}{\end  {multline*}}
\newcommand{\simi}{\operatorname{sim}}
\title{The Local Langlands Conjecture for $\GSp(4)$}
\author{Wee Teck Gan and Shuichiro Takeda}
\address{Mathematics Department, University of California, San Diego, 9500 Gilman Drive, La Jolla,
92093}
\email{wgan@math.ucsd.edu}
\email{shtakeda@math.ucsd.edu}
\subjclass[2000]{11F27, 11F70,  22E50}
\begin{document}
\maketitle

\begin{abstract}
 We prove the local Langlands conjecture for $\GSp_4(F)$ where $F$ is a non-archimedean local
 field of characteristic zero.
 \end{abstract}

\section{\bf Introduction}

Let $F$ be a non-archimedean local field of characteristic $0$ and residue characteristic $p$. Let
$W_F$ be the Weil group of $F$ and let $WD_F = W_F \times \SL_2(\C)$ be the Weil-Deligne group. It was shown by Harris-Taylor \cite{HT} and Henniart \cite{He1} that there is a natural bijection between the set of equivalence classes of irreducible smooth representations of $\GL_n(F)$ and the set of conjugacy classes of $L$-parameters for $\GL_n$, i.e. admissible homomorphisms
\[  \phi: WD_F \longrightarrow \GL_n(\C). \]
This bijection satisfies a number of natural conditions which determine it uniquely.
\vskip 5pt

For a general connected reductive group $G$ over $F$, which we assume to be split for simplicity, Langlands  conjectures that there is a surjective finite-to-one map from the set $\Pi(G)$ of (equivalence classes of) irreducible smooth representations of $G(F)$ to the set $\Phi(G)$ of (equivalence classes of)  admissible homomorphisms
\[  WD_F \longrightarrow  G^{\vee} \]
where $G^{\vee}$ is the Langlands dual group of $G$ and the homomorphisms are taken up to
$G^{\vee}$-conjugacy. This leads to a partition of the set of equivalence classes of irreducible representations of $G(F)$ into a disjoint union of finite subsets, which are the fibers of the map and are called
 $L$-packets. Again, this map is supposed to preserve natural invariants which one can attach to both sides. These natural invariants are the $\gamma$-factors, $L$-factors and $\epsilon$-factors. Unfortunately, on the representation theoretic side,  one only has a general theory of these invariants for generic representations of $G(F)$.

\vskip 10pt

The purpose of this paper is to prove the local Langlands conjecture for $G = \GSp_4$.
\vskip 5pt

\noindent{\bf \underline{Main Theorem}}
\vskip 5pt

\noindent There is a {\em surjective} finite-to-one map
\[   L: \Pi(\GSp_4) \longrightarrow \Phi(\GSp_4) \]
with the following properties:

 \vskip 5pt
\noindent (i) $\pi$ is a (essentially) discrete series representation of $\GSp_4(F)$ if and only if its $L$-parameter $\phi_{\pi}:= L(\pi)$  does not factor through any proper Levi subgroup of $\GSp_4(\C)$.

\vskip 5pt

\noindent (ii) For an $L$-parameter $\phi$, its fiber $L_{\phi}$ can be naturally parametrized by the set of irreducible characters of the component group
\[  A_{\phi} = \pi_0(Z(Im(\phi)) / Z_{\GSp_4}). \]
This component group is either trivial or equal to $\Z/2\Z$. When $A_{\phi} =\Z/2\Z$, exactly one of the two representations in $L_{\phi}$ is generic and it is the one indexed by the trivial character of $A_{\phi}$.
\vskip 5pt

\noindent (iii)  The similitude character $\simi(\phi_{\pi})$ of $\phi_{\pi}$ is equal to the central character $\omega_{\pi}$ of $\pi$ (via local class field theory). Here, $\simi: \GSp_4(\C) \longrightarrow \C^{\times}$ is the similitude character of $\GSp_4(\C)$.
\vskip 10pt

\noindent (iv)   The $L$-parameter of $\pi \otimes (\chi \circ \lambda)$ is equal to
$\phi_{\pi} \otimes \chi$. Here, $\lambda : \GSp_4(F) \longrightarrow F^{\times}$ is the similitude character of $\GSp_4(F)$, and we have regarded $\chi$ as both a character of $F^{\times}$ and a character of $W_F$ by local class field theory.
\vskip 10pt

\noindent (v) Suppose that $\pi$ is a generic representation or a non-supercuspidal representation. Then for any irreducible representation $\sigma$ of $\GL_r(F)$, with $L$-parameter $\phi_{\sigma}$, we have:
\[  \begin{cases}
\gamma(s, \pi \times \sigma, \psi) = \gamma(s, \phi_{\pi} \otimes \phi_{\sigma}, \psi) \\
L(s, \pi \times \sigma) = L(s, \phi_{\pi} \otimes \phi_{\sigma}) \\
\epsilon(s, \pi \times \sigma, \psi) = \epsilon(s, \phi_{\pi} \otimes \phi_{\sigma}, \psi).\end{cases}  \]
Here the functions on the RHS are the local factors of Artin type associated to the relevant representations of $WD_F$, whereas those on the LHS are the local factors of Shahidi, as defined in \S \ref{S:certain}.
\vskip 10pt

\noindent (vi) Suppose that $\pi$ is a non-generic supercuspidal representation. For any irreducible supercuspidal representation $\sigma$ of $\GL_r(F)$ with $L$-parameter $\phi_{\sigma}$, let $\mu(s, \pi \boxtimes \sigma,\psi)$ denote the Plancherel measure associated to the family of induced representations
$I_P(\pi \boxtimes \sigma,s)$ on $\GSpin_{2r+5}(F)$, where we have regarded $\pi \boxtimes \sigma$ as a representation of the Levi subgroup $\GSpin_5(F) \times \GL_r(F)\cong \GSp_4(F) \times \GL_r(F)$. Then $\mu(s, \pi \boxtimes \sigma,\psi)$ is equal to
\[  \gamma(s,  \phi_{\pi}^{\vee} \otimes \phi_{\sigma}, \psi) \cdot
\gamma(-s, \phi_{\pi} \otimes \phi_{\sigma}^{\vee}, \overline{\psi}) \cdot
 \gamma(2s, Sym^2\phi_{\sigma}  \otimes \simi \phi_{\pi}^{-1},\psi) \cdot
\gamma(-2s, Sym^2 \phi_{\sigma}^{\vee} \otimes  \simi \phi_{\pi}, \overline{\psi}). \]
 \vskip 10pt

\noindent (vii) An $L$-packet $L_{\phi}$ contains a generic representation if and only if the adjoint $L$-factor $L(s , Ad \circ \phi)$ is holomorphic at $s = 1$. Here, $Ad$ denotes the adjoint representation of $\GSp_4(\C)$ on the complex Lie algebra $\mathfrak{sp}_4$. Moreover, $L_\phi$ contains an
essentially tempered generic representation if and only if $\phi$ is an essentially tempered $L$-parameter, i.e. $\phi|_{W_F}$ has bounded image in $\PGSp_4(\C)$. In this case, the generic representation in the packet is unique and is indexed by the trivial character of $A_{\phi}$.
\vskip 10pt

\noindent (viii) The map $L$ is uniquely determined by the properties (i), (iii), (v) and (vi), with $r \leq 2$  in (v) and (vi).
\hfill$\blacksquare$

\vskip 10pt

To the best of our knowledge, for non-generic supercuspidal representations, the theory of local $\gamma$-factors, $L$-factors and $\epsilon$-factors of pairs has not been fully developed and so at this point, it is not possible for us to say anything regarding these in part (v) of the theorem. However, if one assumes the existence of a theory of $\gamma$-factors satisfying the usual properties (such as those listed as the ``Ten Commandments" in \cite{LR}), then we can show that (v) holds for all representations, in which case the map $L$  will be uniquely characterized by (iii) and (v) (with $r \leq 2$ in (v)).
\vskip 10pt

In any case, we substitute the (as yet non-existent)  theory of $\gamma$-factors of pairs by the Plancherel measure.
The Plancherel measure in (vi) is a coarser invariant than the $\gamma$-factor, but has the advantage that it is defined for all representations. For generic representations, the identity in (vi) follows from results of Shahidi \cite{Sh} and Henniart \cite{He2}. Thus, (vi) shows that the Plancherel measure is an invariant  of a supercuspidal $L$-packet. Moreover, it turns out that this coarser invariant
is sufficient to distinguish between the non-generic supercuspidal representations of $\GSp_4(F)$, and this gives the characterization of $L$ by (i), (iii), (v) and (vi).

\vskip 10pt

The proof of the theorem relies on the local Langlands correspondence for $\GL_2$ and $\GL_4$ and a consideration of the following two towers of dual pairs:

\[
\xymatrix@R=2pt{
&&\GSO_{3,3}\\
&\GSp_4 \ar@{-}[ru]\ar@{-}[ld]\ar@{-}[rd]&\\
\GSO_{4,0}&&\GSO_{2,2}
}
\]

\vskip 5pt

\noindent Thus, it relies on a study of the local theta correspondence arising between $\GSp_4$ and the orthogonal similitude groups associated to quadratic spaces of rank $4$ or $6$ with trivial discriminant and the accidental isomorphisms:

\[  \begin{cases}
\GSO_{2,2}  \cong (\GL_2 \times \GL_2)/\{ (z,z^{-1}): z \in \GL_1 \} \\
\GSO_{3,3} \cong (\GL_4 \times \GL_1)/ \{ (z,z^{-2}): z \in \GL_1 \} \end{cases} \]
as well as the analogs for the inner forms.
\vskip 10pt

The first attempt to define $L$-packets for $\GSp_4$ is the paper \cite{V} of Vigneras, who considered the case $p \ne 2$.  She defined her $L$-packets via theta lifts from various forms of $\GO_4$ (including those whose discriminant  is not trivial). However, she did not check that her $L$-packets exhaust  $\Pi(\GSp_4)$.
In a more recent paper \cite{Ro2}, B. Roberts has given a more detailed treatment of Vigneras' construction of local $L$-packets (including the case $p=2$).

\vskip 10pt

Our construction of the local $L$-packets, on the other hand, relies only on the theta lifting from 
$\GSp_4$ to
$\GSO_{2,2} $, $\GSO_{4,0}$ and $\GSO_{3,3}$. It works for any $p$ and also
enables us to show that our packets exhaust all representations. We shall also show that, starting from a given $L$-parameter,  the $L$-packets constructed by Vigneras and Roberts agree with ours. As a consequence, we show that when $p \ne 2$, their construction exhausts all irreducible representations of $\GSp_4(F)$.   We also mention that, considering  only those representations of $\GSp_4(F)$ whose central character is a square, a section of our map $L$ was constructed by Jiang-Soudry \cite{JSo}. More precisely, for the split group $G =\SO_{2n+1}$, they constructed an injective (but definitely not surjective unless $n=1$) map from $\Phi(G) \longrightarrow \Pi(G)$ verifying  the conditions (i), (iv) and (v) of our main theorem.

\vskip 10pt

Let us mention the various key ingredients used in the proof of the theorem. The first is a paper of Mui\'{c}-Savin \cite{MS} which studies the theta lifting of generic discrete series representations for isometry groups and relates the non-vanishing of these theta lifts to the local $L$-functions of Shahidi. The second ingredient is  a paper of Kudla-Rallis \cite{KR} which proves the conservation conjecture for the first occurrences of supercuspidal representations of symplectic groups in orthogonal towers. In particular, their results imply that every non-generic supercuspidal representations of  $\GSp_4(F)$ can be obtained as a theta lift from the anisotropic $\GSO_{4,0}$. The third is a recent result of Henniart \cite{He2} which shows that the local Langlands correspondence for $\GL_n$ respects the twisted exterior square $L$-factor. Finally, we have a companion paper \cite{GT4} in which we determine completely the three local theta correspondences mentioned above. The detailed study of these theta correspondences is necessary to supplement the results of \cite{KR} and \cite{MS}, particularly for the non-discrete series representations.

 \vskip 10pt

The crucial results of \cite{KR}, \cite{MS} and \cite{He2} are reviewed in Section \ref{S:KRMS}, after we introduce some basic facts about the theta correspondence for similitudes in Sections \ref{S:theta} and \ref{S:dual}
and recall Shahidi's construction of certain $L$-functions in Section \ref{S:certain}.
Our construction of the $L$-packets and the proof of exhaustion are given in Section \ref{S:packets}, where we also verify the relation of genericity and the adjoint $L$-factor.  The preservation of local factors and Plancherel measures is demonstrated in Sections \ref{S:factors} and \ref{S:Plan} respectively,  and the characterization of $L$ is given in Section \ref{S:char}.
In Section \ref{S:vigneras}, we reconcile our construction with that given by Vigneras and Roberts. Finally, in Section \ref{S:global}, we give a global application: using the results of this paper, we establish the strong lifting of generic cuspidal representations from $\GSp_4$ to $\GL_4$.

 \vskip 10pt

We conclude this introduction with a number of subsequent developments:
 \vskip 5pt

\begin{itemize}
\item[(i)] In a sequel \cite{GT2} to this paper, we deduce the local Langlands correspondence for $\Sp_4$ from the results of this paper;
\vskip 5pt

\item[(ii)] In another sequel \cite{GTW} to this paper, by the first author and W. Tantono, the local Langlands correspondence is extended to the unique inner form of $\GSp_4$;
\vskip 5pt

\item[(iii)]  An $L$-packet is supposed to be ``stable" and  should satisfy some character identities relative to (twisted) endoscopic transfers.
Our method, unfortunately, does not shed any light on these harmonic analytic issues. However, another  sequel \cite{CG} to this paper, by P. S. Chan and the first author, establishes these properties of the $L$-packets constructed here using the Arthur-Selberg trace formula;
\vskip 5pt

\item[(iv)] In a recent paper \cite{DR}, Debacker and Reeder have given a construction of $L$-packets associated to certain tamely ramified $L$-parameters of an arbitrary reductive group $G$. The elements in their $L$-packets are all depth zero supercuspidal representations. One can ask whether their packets agree with ours in the case $G = \GSp_4$.  This is shown to be the case in the UCSD PhD thesis of J. Lust;

\item[(v)] A theory of $L$- and $\epsilon$-factors for $\GSp_4 \times  \GL_r$ (for $r = 1$ or $2$)  which is valid for all representations, including the non-generic ones, is being developed by N. Townsend in his UCSD PhD thesis.
 \end{itemize}

  \vskip 5pt
\noindent{\bf Acknowledgments:} We thank Dipendra Prasad, Brooks Roberts, Gordan Savin  and Marie-France Vigneras for their interests, suggestions and encouragements. We also thank Wilhelm Zink for sending us a copy of his letter \cite{Z} to Vigneras. W. T. Gan's research is partially supported by NSF grant DMS-0500781.

\vskip 15pt

\section{\bf Similitude Theta Correspondences}  \label{S:theta}

In this section, we shall describe some basic properties of the theta correspondence for similitude groups. The definitive reference for this subject matter is the paper [Ro1] of B. Roberts.
However, the results of [Ro1] are not sufficient for our purposes and need to be somewhat extended.

\vskip 10pt
Consider the dual pair $\OO(V) \times \Sp(W)$; for simplicity, we assume that $\dim V$ is even.
For each non-trivial additive character $\psi$, let $\omega_{\psi}$ be the Weil representation for $\OO(V) \times \Sp(W)$, which can be described as follows. Fix a Witt decomposition
$W = X \oplus Y$ and let $P(Y) = \GL(Y) \cdot N(Y)$ be the parabolic subgroup stabilizing the maximal isotropic subspace $Y$. Then
\[  N(Y) = \{ b \in Hom(X,Y) : b^t  = b \}, \]
where $b^t \in Hom(Y^*, X^*) \cong Hom(X,Y)$.
The Weil representation $\omega_{\psi}$  can be realized on the Schwartz space $S(X \otimes V)$ and the action of $P(Y) \times \OO(V)$ is
given by the usual formulas:
\vskip 5pt
\[  \begin{cases}
\omega_{\psi}(h)\phi(x) = \phi(h^{-1} x), \quad \text{for $h \in \OO(V)$;} \\
\omega_{\psi}(a)\phi(x) =  \chi_V(\det_Y(a)) \cdot |\det_Y(a)|^{\frac{1}{2} \dim V} \cdot \phi(a^{-1} \cdot x), \quad \text{for $a \in \GL(Y)$;} \\
\omega_{\psi}(b) \phi(x) = \psi( \langle bx, x \rangle) \cdot \phi(x), \quad \text{for $b \in N(Y)$,} \end{cases}
\]
\vskip 5pt
\noindent where  $\chi_V$ is the quadratic character associated to $\operatorname{disc} V \in F^{\times}/ F^{\times 2}$ and $\langle -, -\rangle$ is the natural symplectic form on $W \otimes V$.
To describe the full action of $\Sp(W)$, one needs to specify the action of a Weyl group element, which acts by a Fourier transform.

\vskip 10pt

If $\pi$ is an irreducible representation of $\OO(V)$ (resp. $\Sp(W)$), the maximal $\pi$-isotypic quotient has the form
\[  \pi \boxtimes  \Theta_{\psi}(\pi) \]
for some smooth representation $ \Theta_{\psi}(\pi)$ of $\Sp(W)$ (resp. $\OO(V)$). We call $\Theta_{\psi}(\pi)$ the big theta lift of $\pi$. It is known that $\Theta_{\psi}(\pi)$ is of finite length and hence is admissible. Let
$\theta_{\psi}(\pi)$ be the maximal semisimple quotient of $\Theta_{\psi}(\pi)$; we call it the small theta lift of $\pi$.  Then it was a conjecture of Howe that
\vskip 5pt

\begin{itemize}
\item $\theta_{\psi}(\pi)$ is irreducible whenever $\Theta_{\psi}(\pi)$ is non-zero.
\vskip 5pt
\item the map $\pi \mapsto \theta_{\psi}(\pi)$ is injective on its domain.
\end{itemize}
\vskip 5pt

\noindent This has been proved by Waldspurger when the residual characteristic $p$ of $F$ is not $2$ and can be checked in many low-rank cases, regardless of the residual characteristic of $F$. If the Howe conjecture is true in general, our treatment in the rest of the paper can be somewhat simplified. However, because we would like to include the case $p =2$ in our discussion, we shall refrain from assuming Howe's conjecture in this paper.
\vskip 10pt

With this in mind, we take note of the following result which was shown by Kudla \cite{K} for any residual characteristic $p$:
\vskip 5pt

\begin{Prop} \label{P:Kudla}
\noindent (i) If $\pi$ is supercuspidal, $\Theta_{\psi}(\pi) = \theta_{\psi}(\pi)$ is irreducible or zero.
\vskip 5pt

\noindent (ii) If $\theta_{\psi}(\pi_1) = \theta_{\psi}(\pi_2) \ne 0$ for two supercuspidal representations
$\pi_1$ and $\pi_2$, then $\pi_1 = \pi_2$.
\end{Prop}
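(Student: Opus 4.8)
The plan is to follow Kudla's argument \cite{K}, whose two essential ingredients are: (a) that a supercuspidal representation of the relevant isometry group (which has compact centre; recall $\dim V$ is assumed even, so that $\Sp(W)$ itself, not merely its metaplectic cover, acts) is both a projective and an injective object in the category of smooth representations, so that its block in the Bernstein decomposition is semisimple; and (b) Kudla's explicit filtration of the Jacquet module of $\omega_{\psi}$ along the unipotent radical of a maximal parabolic of either member of the dual pair, whose graded pieces are built out of Weil representations of smaller dual pairs together with representations parabolically induced within that member. Write $H$ for the group on which $\pi$ lives and $H'$ for the other one.

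For (i): by (a), the $\pi$-isotypic part of $\omega_{\psi}$ splits off as an $H \times H'$-stable direct summand, necessarily of the form $\pi \boxtimes \Theta_{\psi}(\pi)$ with $\Theta_{\psi}(\pi) = \Hom_H(\pi,\omega_{\psi})$; hence the irreducible subquotients of $\Theta_{\psi}(\pi)$, with multiplicities, are exactly the $\tau$ for which $\pi\boxtimes\tau$ occurs in $\omega_{\psi}$. Fix a maximal parabolic $P = MU$ of $H'$ with $M \cong \GL_a \times H'_0$. Since $\Hom_H(\pi,-)$ and the Jacquet functor along $U$ are both exact, $\operatorname{Jac}_U(\Theta_{\psi}(\pi)) = \Hom_H(\pi,\operatorname{Jac}_U(\omega_{\psi}))$ inherits from Kudla's filtration a filtration whose graded pieces are $\Hom_H(\pi,-)$ of the pieces of the latter; supercuspidality of $\pi$ annihilates every piece that is, as an $H$-module, induced from a proper parabolic of $H$ (every piece of positive ``rank''), leaving only the rank-zero term, of the shape $\chi\,|\det|^{s}\boxtimes\Theta_{\psi}^{(0)}(\pi)$ with $\Theta_{\psi}^{(0)}$ the theta lift to the strictly smaller $H'_0$. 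Iterating downwards pins down a single cuspidal support shared by all irreducible subquotients of $\Theta_{\psi}(\pi)$, and an induction on the ranks (of $H'$, in parallel with one on the rank of $H$ to cover the reverse direction) shows that each $\operatorname{Jac}_U(\Theta_{\psi}(\pi))$ is irreducible or zero. If that common support is non-cuspidal, two distinct irreducible subquotients — or one of multiplicity $\ge 2$ — would, by exactness of the Jacquet functor, force $\operatorname{Jac}_U(\Theta_{\psi}(\pi))$ to have length $\ge 2$ for a maximal parabolic $P$ whose Levi carries that support, a contradiction. If the common support is supercuspidal, then $\Theta_{\psi}(\pi)$ is a finite direct sum of supercuspidal representations, and that this sum has a single term of multiplicity one is precisely the multiplicity-one statement for supercuspidals in $\omega_{\psi}$ extracted from the doubling method (equivalently, Rallis's analysis of first occurrences). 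In either case $\Theta_{\psi}(\pi)$ is irreducible or zero, and in particular equals $\theta_{\psi}(\pi)$.

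For (ii): if $\theta_{\psi}(\pi_1) = \theta_{\psi}(\pi_2) = \sigma \ne 0$ then, by (i), $\sigma$ is irreducible and $\Theta_{\psi}(\pi_i) = \sigma$. Since $\pi_i \boxtimes \sigma$ is a quotient of $\omega_{\psi}$, the surjection factors through the maximal $\sigma$-isotypic quotient $\sigma \boxtimes \Theta_{\psi}(\sigma)$, yielding $\Theta_{\psi}(\sigma) \twoheadrightarrow \pi_i$, which splits by projectivity of $\pi_i$; thus each $\pi_i$ is a direct summand of the lift $\Theta_{\psi}(\sigma)$ to $H$. Running the analysis of (i) for $\sigma$ and comparing cuspidal supports (together with a first-occurrence argument) forces $\Theta_{\psi}(\sigma)$ to have a unique supercuspidal summand, whence $\pi_1 \cong \pi_2$; alternatively one argues directly with the doubling seesaw for $(H\times H,\, H')$, in which the hypothesis $\Theta_{\psi}(\pi_1) \cong \Theta_{\psi}(\pi_2)$ forces a nonzero $H$-invariant pairing between $\pi_1$ and $\pi_2$, so again $\pi_1 \cong \pi_2$.

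The main obstacle is exactly what I have glossed over: establishing the precise form of Kudla's filtration of $\operatorname{Jac}_U(\omega_{\psi})$, checking that supercuspidality of $\pi$ really kills all its graded pieces but one, and organizing the induction on the ranks correctly; and, at the bottom of that induction, the case where $\Theta_{\psi}(\pi)$ is itself supercuspidal, where the soft Jacquet-module machinery says nothing and one must import the multiplicity-one statement from the doubling method (equivalently, from Rallis's analysis of first occurrences) — this is where the theorem has its real content. Everything else, namely the projectivity and injectivity of supercuspidals, the exactness of the Jacquet functors, and the finiteness of the set of irreducibles with a fixed cuspidal support, is standard.
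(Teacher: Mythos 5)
The paper itself does not prove this proposition; it is recorded as a known result of Kudla and the reader is referred to \cite{K}, so there is no in-paper argument to compare against. What can be assessed is whether your sketch is a complete reconstruction of Kudla's proof.

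The outer structure you describe --- Kudla's filtration of the Jacquet modules of $\omega_\psi$ along maximal parabolics of either member of the pair, the fact that supercuspidality of $\pi$ kills every graded piece except the rank-zero one, the resulting inductive control on the Jacquet modules and cuspidal support of $\Theta_\psi(\pi)$, and the reduction of (i) to the case where $\Theta_\psi(\pi)$ is itself supercuspidal --- is faithful to \cite{K}. But the proposal does not actually close that bottom case, and your own last paragraph concedes this is where the content lies. The statement you invoke, ``multiplicity one for supercuspidals in $\omega_\psi$'' in the sense $\dim\Hom_{H\times H'}(\omega_\psi,\pi\boxtimes\sigma)\le 1$, is insufficient: in a putative decomposition $\Theta_\psi(\pi)=\bigoplus_i m_i\,\sigma_i$ into supercuspidals it bounds each $m_i$, but does not force the index set to be a singleton. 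What irreducibility requires is the stronger assertion that $\operatorname{End}_{H'}(\Theta_\psi(\pi))$ is at most one-dimensional. A doubling see-saw does deliver that, but only after importing the multiplicity-one structure of the degenerate principal series on the doubled group --- a further nontrivial input which you would need to cite and check in the required generality --- and the parenthetical ``equivalently, Rallis's analysis of first occurrences'' conflates two different things, since the tower property on its own gives no multiplicity bound at all. So the framework and the reduction are right, but the crux of (i) is pointed at rather than proved; your main route to (ii) then inherits the same gap, though the alternative see-saw argument you sketch for (ii) is sound in outline given that same degenerate-principal-series input.
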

\vskip 5pt

\noindent One of the main purposes of this section is to extend this result of Kudla to the case of similitude groups.

\vskip 5pt

Let $\lambda_V$ and $\lambda_W$ be the similitude factors of $\GO(V)$ and $\GSp(W)$ respectively.
We shall consider the group
\[  R = \GO(V) \times \GSp(W)^+ \]
where $\GSp(W)^+$ is the subgroup of $\GSp(W)$ consisting of elements $g$ such that $\lambda_W(g)$ is in the image of $\lambda_V$. In fact, for the cases of interest in this paper (see the next section),  $\lambda_V$ is surjective, in which case $\GSp(W)^+ = \GSp(W)$.
\vskip 5pt

The group $R$ contains the subgroup
\[  R_0 = \{ (h,g) \in R: \lambda_V(h) \cdot \lambda_W(g) = 1\}. \]
 The Weil representation $\omega_{\psi}$ extends naturally to the group $R_0$ via
 \[  \omega_{\psi}(g,h)\phi = |\lambda_V(h)|^{- \frac{1}{8}\dim V \cdot \dim W} \omega(g_1, 1)(\phi \circ h^{-1}) \]
where
\[  g_1 = g \left(  \begin{array}{cc}
\lambda(g)^{-1} & 0 \\
0 & 1  \end{array} \right) \in \Sp(W). \]
Note that this differs from the normalization used in \cite{Ro1}.
Observe in particular that the central elements $(t,t^{-1}) \in R_0$ act by the quadratic character
$\chi_V(t)^{\frac{\dim W}{2}}$.
 \vskip 5pt

Now consider the (compactly) induced representation
\[  \Omega = ind_{R_0}^R \omega_{\psi}. \]
As a representation of $R$, $\Omega$ depends only on the orbit of $\psi$ under the evident action of  $\operatorname{Im} \lambda_V \subset F^{\times}$. For example, if $\lambda_V$ is surjective, then $\Omega$ is independent of $\psi$. For any irreducible representation
$\pi$ of $\GO(V)$ (resp. $\GSp(W)^+$),   the maximal $\pi$-isotypic quotient of $\Omega$ has the form
\[    \pi \otimes  \Theta(\pi) \]
where $\Theta(\pi)$ is some smooth representation of $\GSp(W)^+$ (resp. $\GO(V)$). Further, we let
$\theta(\pi)$ be the maximal semisimple quotient of $\Theta(\pi)$. Note that though $\Theta(\pi)$
may be reducible, it has a central character $\omega_{\Theta(\pi)}$ given by
\[  \omega_{\Theta(\pi)} = \chi_V^{\frac{\dim W}{2}} \cdot \omega_{\pi}. \]
The extended Howe conjecture for similitudes says that $\theta(\pi)$ is irreducible whenever
$\Theta(\pi)$ is non-zero, and the map $\pi \mapsto \theta(\pi)$ is injective on its domain.
It was shown by Roberts \cite{Ro1} that this follows from the Howe conjecture for isometry groups, and thus holds if $p \ne 2$.  In any case, we have the following lemma which relates the theta correspondence for isometries and similitudes:
\vskip 5pt

\begin{Lem} \label{L:similitude1}

(i) Suppose that $\pi$ is an irreducible representation of a similitude group and $\tau$ is a constituent of the restriction of $\pi$ to the isometry group. Then $\theta_{\psi}(\tau) \ne 0$ implies that $\theta(\pi) \ne 0$.

\vskip 10pt

\noindent (ii) Suppose that
\[  \Hom_R(\Omega, \pi_1 \boxtimes \pi_2) \ne 0. \]
Suppose further that for each constituent $\tau_1$ in the restriction of $\pi_1$ to $\OO(V)$,
$\theta_{\psi}(\tau_1)$ is irreducible and
the map $\tau_1 \mapsto \theta_{\psi}(\tau_1)$ is injective on the set of irreducible constituents of
$\pi_1|_{\OO(V)}$. Then there is a uniquely determined bijection
\vskip 5pt
\[ f:  \{ \text{irreducible summands of $\pi_1|_{\OO(V)}$} \}  \longrightarrow
 \{ \text{irreducible summands of $\pi_2|_{\Sp(W)}$} \} \]
\vskip 10pt
\noindent such that for any irreducible summand $\tau_i$ in the restriction of $\pi_i$ to the relevant isometry group,
\vskip 3pt
\[  \tau _2 = f(\tau_1)  \Longleftrightarrow
\Hom_{\OO(V) \times \Sp(W)} (\omega_{\psi}, \tau_1  \boxtimes \tau_2) \ne 0 . \]
\vskip 5pt
\noindent One has the  analogous statement with the roles of $\OO(V)$ and $\Sp(W)$ exchanged.
 \vskip 10pt

\noindent (iii)  If $\pi$ is a representation of $\GO(V)$  (resp. $\GSp(W)^+$) and the restriction of $\pi$ to the relevant isometry group is $\oplus_i  \tau_i$, then as representations of $\Sp(W)$ (resp. $\OO(V)$),
\[  \Theta(\pi)  \cong  \bigoplus_i \Theta_{\psi}(\tau_i). \]
In particular, $\Theta(\pi)$ is admissible of finite length. Moreover,
if $\Theta_{\psi}(\tau_i) = \theta_{\psi}(\tau_i)$ for each  $i$, then
\[  \Theta(\pi) = \theta(\pi). \]
\end{Lem}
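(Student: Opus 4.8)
The plan is to reduce all three assertions to the corresponding facts about the isometry Weil representation $\omega_\psi$ of $\OO(V)\times\Sp(W)$ --- which is the normal subgroup of $R$ lying inside the intermediate group $R_0$ --- and to combine these with the Clifford theory governing the restriction of an irreducible representation of a similitude group to its isometry subgroup. Recall that for irreducible smooth $\pi$, the restriction $\pi|_{\OO(V)}$ (resp. $\pi|_{\Sp(W)}$) is a finite direct sum of irreducibles which the ambient similitude group permutes transitively; this is the Clifford theory of \cite{Ro1}, following Gelbart--Knapp. The basic mechanism is a Mackey-theoretic analysis of $\Omega = \ind_{R_0}^R\omega_\psi$ along the chain $\OO(V)\times\Sp(W) \subset R_0 \subset R$: since $\OO(V)\times\Sp(W)$ is normal in $R$ and already contained in $R_0$, the relevant double cosets are indexed by $R_0\backslash R \cong \operatorname{Im}\lambda_V$, and conjugation by an element of $R$ with similitude factor $a$ carries $\omega_\psi$ to $\omega_{\psi_a}$, while $\Omega$ itself depends only on the $\operatorname{Im}\lambda_V$-orbit of $\psi$. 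After this bookkeeping one obtains, for irreducible $\pi_1,\pi_2$ of the two similitude groups with isometry restrictions $\pi_k|_{\mathrm{isom}} = \bigoplus_i \tau_{k,i}$, a way of reading off $\Hom_R(\Omega, \pi_1\boxtimes\pi_2)$ from the collection of isometry Hom-spaces $\Hom_{\OO(V)\times\Sp(W)}(\omega_\psi, \tau_{1,i}\boxtimes\tau_{2,j})$ together with the compatible transitive actions of $\GO(V)$ and $\GSp(W)^+$ on these data. This is the type of computation carried out in \cite{Ro1}, adapted to the normalization of $\omega_\psi$ on $R_0$ used here; I expect this reduction to be the main obstacle, the rest being essentially formal.

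Granting it, part (iii) comes first and drives the rest. Applying the reduction with $\pi$ an irreducible of $\GO(V)$ (resp. $\GSp(W)^+$), the $\pi$-isotypic quotient of $\Omega$, restricted to the isometry group on the opposite side, is assembled from the $\tau_i$-isotypic quotients of $\omega_\psi$, which are $\tau_i\boxtimes\Theta_\psi(\tau_i)$; summing over the finitely many $\tau_i$ occurring in $\pi|_{\mathrm{isom}}$ gives $\Theta(\pi)|_{\mathrm{isom}} \cong \bigoplus_i \Theta_\psi(\tau_i)$. Finite length and admissibility of $\Theta(\pi)$ follow immediately, each $\Theta_\psi(\tau_i)$ being admissible of finite length. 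For the final sentence, if $\Theta_\psi(\tau_i) = \theta_\psi(\tau_i)$ for every $i$ then $\Theta(\pi)|_{\mathrm{isom}} \cong \bigoplus_i \theta_\psi(\tau_i)$ is semisimple; since $\Theta(\pi)$ has finite length, admits a central character, and restricts semisimply to the normal subgroup $\Sp(W)$ (resp. $\OO(V)$), and since the quotient of the relevant similitude group by the subgroup generated by its center and the isometry subgroup is finite, a standard Clifford-theoretic argument (as in \cite{Ro1}) forces $\Theta(\pi)$ to be semisimple, i.e. $\Theta(\pi) = \theta(\pi)$.

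For part (ii), suppose $\Hom_R(\Omega, \pi_1\boxtimes\pi_2) \ne 0$. By the reduction above this produces a nonzero element of $\bigoplus_{i,j}\Hom_{\OO(V)\times\Sp(W)}(\omega_\psi, \tau_{1,i}\boxtimes\tau_{2,j})$, so some component $\Hom_{\OO(V)\times\Sp(W)}(\omega_\psi, \tau_{1,i}\boxtimes\tau_{2,j})$ is nonzero. The hypotheses that each $\theta_\psi(\tau_{1,i})$ is irreducible and that $\tau_{1,i}\mapsto\theta_\psi(\tau_{1,i})$ is injective on the constituents of $\pi_1|_{\OO(V)}$ force such a component to be nonzero exactly when $\tau_{2,j}\cong\theta_\psi(\tau_{1,i})$, with a unique such $j$ for each $i$; the compatible transitivity of the $\GO(V)$- and $\GSp(W)^+$-actions then forces every $\tau_{1,i}$, and symmetrically every $\tau_{2,j}$, to be matched. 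Setting $f(\tau_{1,i})$ equal to the corresponding $\tau_{2,j}$ defines the asserted bijection; the equivalence $\tau_2 = f(\tau_1) \Longleftrightarrow \Hom_{\OO(V)\times\Sp(W)}(\omega_\psi,\tau_1\boxtimes\tau_2)\ne 0$ holds by construction and characterizes $f$ uniquely, and the version with $\OO(V)$ and $\Sp(W)$ exchanged is identical. Finally, part (i) is the reverse, ``going up'' direction: given $\theta_\psi(\tau)\ne 0$, pick an irreducible quotient $\tau'$ of $\Theta_\psi(\tau)$, so that $\Hom_{\OO(V)\times\Sp(W)}(\omega_\psi,\tau\boxtimes\tau')\ne 0$; extending $\tau\boxtimes\tau'$ across its stabilizer in $R_0$ and inducing up to $R$ yields an irreducible constituent of $\Omega$ occurring in $\pi\boxtimes\pi_2$ for a suitable irreducible $\pi_2$ of the other similitude group, whence $\Theta(\pi)\ne 0$.
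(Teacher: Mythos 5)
Your overall strategy — reduce to the isometry theta correspondence and combine with Clifford theory of restriction from similitude to isometry groups — is the right one, and the paper proceeds in the same spirit. But your Mackey analysis restricts $\Omega$ all the way down to $\OO(V)\times\Sp(W)$ and then tries to climb back up; the paper's key simplification, which you should isolate, is to restrict $\Omega$ only to $\GSp(W)^+$. Since $R_0\cdot(\{1\}\times\GSp(W)^+)=R$ and $R_0\cap(\{1\}\times\GSp(W)^+)=\{1\}\times\Sp(W)$, one gets in one stroke that $\Omega|_{\GSp(W)^+}\cong \ind_{\Sp(W)}^{\GSp(W)^+}\omega_\psi$. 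This single identity makes parts (i) and (iii) immediate: for (i), Frobenius reciprocity converts $\theta_\psi(\tau)\ne 0$ into $\Hom_{\Sp(W)}(\omega_\psi,\pi|_{\Sp(W)})\ne 0$ (using that $\tau$ is a direct \emph{summand} of $\pi|_{\Sp(W)}$) and hence $\Hom_{\GSp(W)^+}(\Omega,\pi)\ne 0$; for (iii), the same identity gives a chain of $\OO(V)$-equivariant isomorphisms $\Theta(\pi)^{*}\cong\Hom_{\GSp(W)^+}(\Omega,\pi)\cong\Hom_{\Sp(W)}(\omega_\psi,\pi|_{\Sp(W)})\cong\bigoplus_i\Theta_\psi(\tau_i)^{*}$, and one finishes by passing to smooth duals (noting that $\Theta(\pi)$ has a central character, so its $\OO(V)$- and $\GO(V)$-contragredients coincide).

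Your proposed argument for (i) has a genuine gap: given a quotient $\omega_\psi\twoheadrightarrow\tau\boxtimes\tau'$ of $\OO(V)\times\Sp(W)$-modules, you cannot simply "extend $\tau\boxtimes\tau'$ across its stabilizer in $R_0$ and induce up to $R$" and conclude that the result is a quotient of $\Omega=\ind_{R_0}^R\omega_\psi$. The quotient map $\omega_\psi\to\tau\boxtimes\tau'$ is only $\OO(V)\times\Sp(W)$-equivariant; it does not descend to an $R_0$-equivariant map unless you first verify compatibility with the extra $R_0$-action, and even then the inducing-up step needs to be matched carefully against the compact induction defining $\Omega$. The paper's Frobenius-reciprocity route sidesteps all of this.

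For (ii), your "compatible transitivity" sketch is morally right, but the technical input you are tacitly using is the multiplicity-one theorem of Adler--Prasad \cite{AP}: that $\pi|_{\OO(V)}$ and $\pi|_{\Sp(W)}$ are multiplicity-free. This is exactly what the paper invokes (together with \cite[Lemma 4.2]{Ro1}) and is essential to pin down the bijection $f$ unambiguously; without citing it, your claim that each $\tau_{1,i}$ matches a unique $\tau_{2,j}$ is not justified.
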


\begin{proof}
(i) Without loss of generality, suppose that $\pi$ is a representation of $\GSp(W)^+$.  As a representation of $\GSp(W)^+$,
\[  \Omega = ind_{\Sp(W)}^{\GSp(W)^+} \omega_{\psi}. \]
Hence the result follows by Frobenius reciprocity.

\vskip 5pt
(ii) This is [Ro1, Lemma 4.2], taking note of  the results of [AP], where it was shown that restrictions of irreducible representations from similitude groups to isometry groups are multiplicity-free.

 \vskip 5pt

(iii) By symmetry, let us  suppose that $\pi$ is a representation of $\GSp(W)^+$. Then we have the following sequence of $\OO(V)$-equivariant isomorphisms:
\begin{align}
\Theta(\pi)^* &\cong  \Hom_{\GSp(W)^+}(\Omega, \pi) \notag \\
&\cong \Hom_{\Sp(W)}(\omega_{\psi}, \pi|_{\Sp(W)}) \quad \text{(by Frobenius reciprocity)} \notag \\
&\cong \bigoplus_i \Hom_{\Sp(W)}(\omega_{\psi}, \tau_i) \notag \\
&\cong \bigoplus_i \Theta_{\psi}(\tau_i)^*, \notag \end{align}
where $\Theta(\pi)^*$ denotes the full linear dual of $\Theta(\pi)$.
 Thus, we have an $\OO(V)$-equivariant isomorphism of $\OO(V)$-smooth vectors
\[  \Theta(\pi)^{\vee} \cong \bigoplus_i \Theta_{\psi}(\tau_i)^{\vee}. \]
Note that since $\Theta(\pi)$ has a central character, the subspace of $\GO(V)$-smooth vectors in $\Theta(\pi)^*$ is the same as its subspace of $\OO(V)$-smooth vectors. In other words, the contragredient of $\Theta(\pi)$ as a representation of $\GO(V)$ is the same as  its contragredient as a representation of $\OO(V)$.  Using the fact that the
$\Theta_{\psi}(\tau_i)$'s are admissible of finite length, the above identity implies that $\Theta(\pi)$ is admissible of finite length.
The desired result then follows by taking contragredient. Moreover, if $\Theta_{\psi}(\tau_i)$ is semisimple  for each $i$,  then it is clear from the above that $\Theta(\pi)$ is semisimple as well.
\end{proof}
\vskip 10pt

\begin{Prop} \label{P:super}
Suppose that $\pi$ is a supercuspidal representation of $\GO(V)$ (resp. $\GSp(W)^+$). Then we have:
\vskip 5pt

\noindent (i) $\Theta(\pi)$ is either zero or is an irreducible representation of $\GSp(W)^+$ (resp. $\GO(V)$).
\vskip 5pt

\noindent (ii) If $\pi'$ is another supercuspidal representation such that $\Theta(\pi') = \Theta(\pi) \ne 0$, then $\pi' = \pi$.
\end{Prop}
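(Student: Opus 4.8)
The plan is to deduce both parts from Kudla's Proposition~\ref{P:Kudla} for isometry groups together with Lemma~\ref{L:similitude1}(iii), so that the only real work is the bookkeeping needed to pass between isometry and similitude groups. By symmetry assume $\pi$ is supercuspidal on $\GSp(W)^+$. First I would record that $\pi|_{\Sp(W)} = \bigoplus_i \tau_i$ is a multiplicity-free sum (by the restriction result recalled in the proof of Lemma~\ref{L:similitude1}) of irreducible supercuspidal representations of $\Sp(W)$ --- supercuspidality by a Jacquet-module argument, since the unipotent radicals of parabolics of $\Sp(W)$ are those of $\GSp(W)$ --- and that by Clifford theory the $\tau_i$ form a single $\GSp(W)^+$-conjugacy orbit. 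Since each $\tau_i$ is supercuspidal, Proposition~\ref{P:Kudla} gives that $\Theta_\psi(\tau_i) = \theta_\psi(\tau_i)$ is irreducible or zero and that the nonzero ones are pairwise non-isomorphic; Lemma~\ref{L:similitude1}(iii) then gives $\Theta(\pi) = \theta(\pi) = \bigoplus_i \theta_\psi(\tau_i)$ as an $\OO(V)$-representation, so $\Theta(\pi)|_{\OO(V)}$ is semisimple and multiplicity-free. If all $\theta_\psi(\tau_i)$ vanish then $\Theta(\pi) = 0$; otherwise fix $i_0$ with $\theta_\psi(\tau_{i_0}) \ne 0$ and put $I = \{\, i : \theta_\psi(\tau_i) \ne 0 \,\}$.

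For (i), write $\Theta(\pi) = \bigoplus_{j=1}^m \sigma_j$ with the $\sigma_j$ irreducible; multiplicity-freeness of $\Theta(\pi)|_{\OO(V)}$ forces the $\sigma_j|_{\OO(V)}$ to be nonzero and pairwise disjoint, so $\sigma_j|_{\OO(V)} = \bigoplus_{i \in S_j} \theta_\psi(\tau_i)$ for the blocks $S_j$ of a partition of $I$. Since $\sigma_j$ is a quotient of $\Theta(\pi)$ we have $\Hom_R(\Omega, \sigma_j \boxtimes \pi) \ne 0$, and because the same $\Omega$ computes the big theta lift in either direction this says $\pi$ is a quotient of $\Theta(\sigma_j)$. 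Restricting to $\Sp(W)$ and using Lemma~\ref{L:similitude1}(iii) for $\sigma_j$, the supercuspidal representation $\tau_{i_0}$ is a quotient of $\Theta(\sigma_j)|_{\Sp(W)} = \bigoplus_{i \in S_j} \Theta_\psi(\theta_\psi(\tau_i))$, hence (being irreducible) a quotient of $\Theta_\psi(\theta_\psi(\tau_i))$ for some $i \in S_j$. Unwinding the theta correspondence (and using that $\tau_{i_0}$ is supercuspidal) then gives
\[ 0 \ne \Hom_{\Sp(W)}\big( \Theta_\psi(\theta_\psi(\tau_i)),\, \tau_{i_0} \big) \cong \Hom_{\OO(V)}\big( \theta_\psi(\tau_{i_0}),\, \theta_\psi(\tau_i) \big), \]
whence $\theta_\psi(\tau_{i_0}) \cong \theta_\psi(\tau_i)$ and $i = i_0$ by Proposition~\ref{P:Kudla}(ii). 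Thus $i_0 \in S_j$ for every $j$, and since the $S_j$ are disjoint we get $m = 1$, i.e. $\Theta(\pi)$ is irreducible.

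For (ii), let $\pi'$ be supercuspidal with $\sigma := \Theta(\pi') = \Theta(\pi) \ne 0$; this is irreducible by (i). As above, both $\pi$ and $\pi'$ are quotients of $\Theta(\sigma)$, which has finite length by Lemma~\ref{L:similitude1}(iii); since supercuspidal representations are projective and injective, $\pi$ and $\pi'$ are direct summands of $\Theta(\sigma)$. Now $\Theta(\sigma)|_{\Sp(W)} = \bigoplus_{i \in I} \Theta_\psi(\theta_\psi(\tau_i))$, and the displayed $\Hom$-isomorphism, together with Schur's lemma and Proposition~\ref{P:Kudla}, shows that the supercuspidal direct summands of this representation are exactly the $\tau_i$ with $i \in I$, each occurring with multiplicity one. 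In particular every constituent of $\pi'|_{\Sp(W)}$ lies in $\{\tau_i : i \in I\}$, so $\pi|_{\Sp(W)}$ and $\pi'|_{\Sp(W)}$ share a constituent and therefore coincide by Clifford theory. If $\pi \not\cong \pi'$, then $\pi \oplus \pi'$ is a direct summand of $\Theta(\sigma)$, forcing some $\tau_i$ to occur as a direct summand of $\Theta(\sigma)|_{\Sp(W)}$ with multiplicity at least two --- contradicting the previous sentence. Hence $\pi \cong \pi'$.

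The step I expect to be the main obstacle is this last one. The argument up to ``$\pi|_{\Sp(W)}$ and $\pi'|_{\Sp(W)}$ share a constituent'' only shows, via Clifford theory, that $\pi' \cong \pi \otimes (\chi \circ \lambda_W)$ for some character $\chi$ of $F^\times$; comparing central characters (using $\omega_{\Theta(\pi)} = \chi_V^{\dim W/2}\,\omega_\pi$) forces $\chi$ to be quadratic, but not trivial. Ruling out a nontrivial quadratic twist is exactly what the multiplicity-one count for the double theta lifts $\Theta_\psi(\theta_\psi(\tau_i))$ accomplishes; a clumsier alternative would be to invoke the $\GO(V) \times \GSp(W)^+$-equivariance of $\Omega$ to match the character-twisting stabilizers of $\pi$ and of $\Theta(\pi)$ and thereby conclude directly that the twist is trivial.
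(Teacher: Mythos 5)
Your proof is correct but takes a genuinely different route from the paper's, most visibly in part (ii). For (i), the paper applies Lemma~\ref{L:similitude1}(ii) directly: for any irreducible constituent $\Pi$ of $\theta(\pi)$, that lemma produces a bijection between the $\OO(V)$-summands of $\pi$ and the $\Sp(W)$-summands of $\Pi$, which forces $\Pi|_{\Sp(W)} = \bigoplus_i \theta_{\psi}(\tau_i) = \Theta(\pi)|_{\Sp(W)}$ and hence $\Pi = \Theta(\pi)$. You instead re-derive that bijectivity by hand, passing through the double lift $\Theta(\sigma_j)$ and the $\Hom$-isomorphism $\Hom_{\Sp(W)}(\Theta_{\psi}(\theta_{\psi}(\tau_i)),\tau_{i_0}) \cong \Hom_{\OO(V)}(\theta_{\psi}(\tau_{i_0}),\theta_{\psi}(\tau_i))$; this is more explicit but longer. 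For (ii), the paper reduces matters to showing that the group $I(\pi)$ of quadratic twists fixing $\pi$ equals $I(\Theta(\pi))$, and proves this by matching constituent counts on the two sides via Lemma~\ref{L:similitude1}(ii) again; you instead establish that each $\tau_i$ occurs with multiplicity exactly one in $\Theta(\sigma)|_{\Sp(W)}$ (again via the double lift and Schur's lemma) and derive a contradiction from the existence of two non-isomorphic supercuspidal direct summands $\pi$ and $\pi'$. The alternative you mention in your closing paragraph and dismiss as ``clumsier'' --- matching twist stabilizers of $\pi$ and $\Theta(\pi)$ --- is in fact essentially the paper's argument, and with Lemma~\ref{L:similitude1}(ii) already available it is the shorter of the two. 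Both proofs are sound; yours is more self-contained in that it leans less on Lemma~\ref{L:similitude1}(ii) and carries out the multiplicity bookkeeping from the Weil representation directly, while the paper's exploits that lemma to shortcut precisely this bookkeeping.
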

\vskip 5pt

\begin{proof}
(i) Without loss of generality, suppose that $\pi$ is a supercuspidal representation of $\GO(V)$ and $\Theta(\pi)$ is nonzero. By Lemma \ref{L:similitude1}(iii), if $\pi|_{\OO(V)} = \bigoplus_i \tau_i$, we have:
\[ \Theta(\pi) = \theta(\pi) = \bigoplus_i \theta_{\psi}(\tau_i).\]
By Lemma \ref{L:similitude1}(ii) and Prop. \ref{P:Kudla},  we see that any irreducible constituent $\Pi$
of $\theta(\pi)$ satisfies:
\[  \Pi|_{\Sp(W)} = \bigoplus_i \theta_{\psi}(\tau_i).\]
Thus we see that $\Theta(\pi)$ is irreducible. This proves (i).
\vskip 5pt

(ii) Prop. \ref{P:Kudla}(ii) implies that if $\Theta(\pi') = \Theta(\pi) \ne 0$, then
$\pi'|_{\OO(V)}  \cong  \pi|_{\OO(V)}$. Since $\pi$ and $\pi'$ must have the same central character, we see that  $\pi' = \pi \otimes (\chi \circ \lambda_V)$ for some quadratic character $\chi$. Moreover, it is easy to see that
\[  \Theta(\pi \otimes (\chi\circ \lambda_V)) = \Theta(\pi) \otimes (\chi \circ \lambda_W). \]
Hence we would be done if we can show that for any quadratic character $\chi$,
\[  \pi \otimes \chi = \pi \Longleftrightarrow  \Theta(\pi) \otimes \chi = \Theta(\pi). \]
Of course, the implication $(\Longrightarrow)$ is clear from the above. To show the converse, let us set
\[  I(\pi) = \{ \text{quadratic characters $\chi$: $\pi \otimes \chi = \pi$} \}, \]
and let $I(\Theta(\pi))$ be the analogous group of quadratic characters. As we noted above,
\[ I(\pi) \subset I(\Theta(\pi)), \]
and we need to show the reverse inclusion.
Now the size of the group $I(\pi)$ is equal to the number of irreducible constituents in
$\pi|_{\OO(V)}$. By Lemma \ref{L:similitude1}(ii), however, the number of irreducible constituents in $\pi|_{\OO(V)}$ and $\Theta(\pi)|_{\Sp(W)}$ are equal. Hence
\[  I(\pi) = I(\Theta(\pi)), \]
as desired.

\end{proof}

\vskip 15pt

\section{\bf Theta Correspondences for $\GSp_4$} \label{S:dual}

In this section,  we specialize to the cases of interest in this paper.
Let $D$ be a (possibly split) quaternion algebra over $F$ and let $\mathbb{N}_D$ be its reduced norm.
Then $(D, \mathbb{N}_D)$ is a rank 4 quadratic space.
We have an isomorphism
\[  \GSO(D, \mathbb{N}_D) \cong (D^{\times} \times D^{\times})/ \{(z,z^{-1}): z \in \GL_1\} \]
via the action of the latter on $D$ given by
\[  (\alpha, \beta) \mapsto  \alpha x \overline{\beta}. \]
Moreover, an element of $\GO(D,\mathbb{N}_D)$ of determinant $-1$ is given by the conjugation action $c: x \mapsto \overline{x}$ on $D$.  An irreducible  representation of $\GSO(D)$ is thus of the form
$\tau_1 \boxtimes \tau_2$ where $\tau_i$ are representations of $D^{\times}$ with equal central character. Moreover, the action of $c$ on representations of $\GSO(D)$ is given by $\tau_1 \boxtimes \tau_2 \mapsto \tau_2 \boxtimes \tau_1$. One of the dual pairs which will be of interest to us is $\GSp_4 \times \GO(D)$. In the companion paper \cite{GT4}, we determine the associated theta correspondence completely. Some of the results are summarized in Theorem \ref{T:summary} below.

\vskip 10pt

Now consider the rank 6 quadratic space:
\[  (V_D, q_D)  = (D , \mathbb{N}_D) \oplus \mathbb{H} \]
where $\mathbb{H}$ is the hyperbolic plane. Then one has an isomorphism
\[  \GSO(V_D) \cong (\GL_2(D)  \times \GL_1)/ \{ (z \cdot \operatorname{Id},\; z^{-2}): z \in \GL_1\}. \]
To see this, note that the quadratic space $V_D$ can also be described as the space of
$2 \times 2$-Hermitian matrices with entries in $D$, so that a typical element has the form
\[  (a,d; x) = \left( \begin{array}{cc}
a & x \\
\overline{x} & d \end{array} \right),  \qquad \text{$a, d \in F$ and $x \in D$},  \]
equipped with the quadratic form $- \det (a,d;x) =  -ad + \mathbb{N}_D(x)$.
The action of $\GL_2(D)  \times \GL_1$ on this space is given by
\[  (g,z)(X) = z \cdot g \cdot X \cdot \overline{g}^t. \]
\noindent
Observe that an irreducible representation of $\GSO(V_D)$ is of the form $\pi \boxtimes \mu$ where $\pi$ is a representation of $\GL_2(D)$ and $\mu$ is a square root of the central character of $\pi$.  The similitude factor of $\GSO(V_D)$ is given by $\lambda_{D}(g,z) = N(g) \cdot z^2$,
where $N$ is the reduced norm on the central simple algebra $\text{M}_2(D)$. Thus,
 \[  \SO(V_D) = \{ (g,z) \in \GSO(V_D): N(g) \cdot z^2 = 1\}. \]

 \vskip 10pt
 We can now consider the theta correspondence in this case.
Since we only need to consider $V_D$ when $D$ is split,  we shall suppress $D$ from the notations.
Thus we specialize the results of the previous section to the case when $\dim W = 4$ and
$V$ is the split quadratic space of dimension $6$, so that $\lambda_V$ is surjective and
 the induced Weil representation $\Omega$ is a representation of $R = \GSp(W) \times \GO(V)$.
In fact, we shall only consider the
theta correspondence for $\GSp(W) \times \GSO(V)$. There is no significant loss in restricting to
$\GSO(V)$ because of the following lemma:
\vskip 5pt

\begin{Lem}
Let $\pi$ (resp. $\tau$) be an irreducible representation of $\GSp(W)$ (resp. $\GO(V)$) and
suppose that
\[  \Hom_{\GSp(W) \times \GO(V)}(\Omega, \pi \otimes \tau) \ne 0. \]
Then the restriction of $\tau$ to $\GSO(V)$ is irreducible. If $\nu_0 = \lambda_V^{-3}  \cdot \det$ is the unique non-trivial quadratic character of $\GO(V)/\GSO(V)$, then $\tau \otimes \nu_0$ does not participate in the theta correspondence with $\GSp(W)$.
\end{Lem}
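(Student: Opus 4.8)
The plan is to pin down the irreducible $\GSO(V)$-constituent of $\tau|_{\GSO(V)}$ in terms of its $\GL_4$-component and to exploit the single structural input that a representation occurring in the $\GSp_4\times\GSO_6$ theta correspondence has $\GL_4$-component which is self-dual up to a twist; the special numerology $(\dim W,\dim V)=(4,6)$ is what makes this input true. Concretely, by the multiplicity-freeness of restriction from similitude to isometry groups and since $\GO(V)/\GSO(V)\cong\Z/2\Z$ is generated by the conjugation $c$, either $\tau|_{\GSO(V)}$ is irreducible or it equals $\sigma\oplus\sigma^c$ with $\sigma$ irreducible and $\sigma^c\not\cong\sigma$. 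Using the isomorphism $\GSO(V)\cong(\GL_4(F)\times\GL_1(F))/\{(z\cdot\mathrm{Id},z^{-2})\}$, write $\sigma=\rho\boxtimes\mu$ with $\rho$ an irreducible representation of $\GL_4(F)$ and $\mu$ a character of $F^\times$ satisfying $\omega_\rho=\mu^2$. A direct computation with the explicit action of $c$ on $\GSO(V)$ gives $\sigma^c\cong(\rho^\vee\otimes(\mu\circ\det))\boxtimes\mu$, so that $\sigma^c\cong\sigma$ if and only if $\rho^\vee\cong\rho\otimes(\mu^{-1}\circ\det)$, i.e.\ exactly when $\rho$ is self-dual up to the twist by $\mu^{-1}$.

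The key step is then to show that the hypothesis $\Hom_{\GSp(W)\times\GO(V)}(\Omega,\pi\otimes\tau)\ne0$ forces $\rho^\vee\cong\rho\otimes(\mu^{-1}\circ\det)$. Restricting the $\GO(V)$-action to $\GSO(V)$ shows that $\sigma$ occurs in the theta correspondence for $\GSp(W)\times\GSO(V)$ with $\pi$; by the basic structure of this correspondence --- which, via the accidental isomorphism above, realizes the functorial lift $\GSp_4\to\GL_4$ attached to the $4$-dimensional spin representation of $\GSp_4(\C)=\GSpin_5(\C)$, and which is worked out completely in \cite{GT4} (cf.\ Theorem \ref{T:summary}) --- the $\GL_4$-factor $\rho$ of $\sigma$ is self-dual up to twist by $\mu^{-1}$, and $\mu$ agrees with the central character $\omega_\pi$ of $\pi$. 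Hence $\sigma^c\cong\sigma$, and $\tau|_{\GSO(V)}=\sigma$ is irreducible. A more self-contained variant of this step is to descend to the isometry pair: by Frobenius reciprocity (cf.\ the proof of Lemma \ref{L:similitude1}) there are constituents $\pi_0\subseteq\pi|_{\Sp(W)}$ and $\tau_0\subseteq\tau|_{\OO(V)}$ with $\Hom_{\Sp(W)\times\OO(V)}(\omega_\psi,\pi_0\boxtimes\tau_0)\ne0$; via the embedding $\Spin_5\hookrightarrow\Spin_6=\SL_4$ the $\SO(V)$-parameter of $\tau_0$ is the exterior square of the $4$-dimensional symplectic parameter of $\pi_0$, which is self-dual, and self-duality of this parameter forces $\tau_0|_{\SO(V)}$ to be irreducible; one then transports the conclusion back up to $\GO(V)$.

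For the assertion that $\tau\otimes\nu_0$ does not participate: by the previous step $\tau|_{\GSO(V)}=\sigma$ is irreducible, so $\tau$ and $\tau\otimes\nu_0$ are the two distinct extensions of $\sigma$ to $\GO(V)$, and $\nu_0$ restricts to the determinant character on $\OO(V)$. If $\tau\otimes\nu_0$ also participated, then --- descending to the isometry pair as above --- both $\tau_0$ and $\tau_0\otimes\det$ would admit non-zero theta lifts to $\Sp(W)$ with $\dim W=4$; since $\dim V=6$, this is incompatible with the conservation relation of Kudla--Rallis for the first occurrence of representations of $\OO(V)$ in the symplectic Witt tower (\cite{KR}), and in any case it is immediate from the explicit determination of the $\GSp_4\times\GSO_6$ correspondence in \cite{GT4}, which identifies exactly which of the two extensions of $\sigma$ occurs.

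The hard part is the key step: that a representation occurring in the $\GSp_4\times\GO_6$ theta correspondence has $\GL_4$-component which is self-dual up to twist. This is precisely where the numerology $(\dim W,\dim V)=(4,6)$ is used --- the analogous statement for $\GO_4$ is false, since there $c$ acts on $\GSO_4\cong(\GL_2\times\GL_2)/\{(z,z^{-1})\}$ by the swap rather than as a duality on a single $\GL_4$ --- and it is where the detailed input of \cite{GT4} (or an explicit see-saw/doubling argument with $\GSp_4\times\GSp_4$) is genuinely needed; everything else is formal manipulation with restriction, Frobenius reciprocity, and the isometry--similitude comparison of Lemma \ref{L:similitude1}.
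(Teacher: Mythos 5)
Your explicit $\GL_4$-bookkeeping is a good way to understand the \emph{content} of the lemma, but the argument as written is circular at the key step and is not the paper's route. To deduce the self-duality $\rho \cong \rho^\vee \otimes (\mu \circ \det)$ you invoke either the explicit determination of the $\GSp_4 \times \GSO(V)$ correspondence from \cite{GT4}/Theorem \ref{T:summary}, or the Langlands parameters of participating isometry constituents. Both are out of order: Theorem \ref{T:summary} comes later and rests on \cite{GT4}, which works with $\GSO(V)$ rather than $\GO(V)$ because of precisely this lemma; and appealing to parameters of representations of $\Sp(W)$ or $\SO(V)$ is circular in a paper whose purpose is to construct the local Langlands correspondence. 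You yourself note that the key step ``genuinely needs'' the input of \cite{GT4}, which is exactly what the lemma cannot assume at this stage.

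The paper's actual proof is a citation. Rallis \cite[Appendix]{R} (see also \cite[\S 5, Pg. 282]{Pr1}) proves, for the isometry pair $\OO(V) \times \Sp(W)$ with $\dim V > \dim W$, that an irreducible representation of $\OO(V)$ participating in the theta correspondence restricts irreducibly to $\SO(V)$ and its $\det$-twist does not participate; this is a soft structural fact about the Weil representation (mixed models and Jacquet-module filtrations), needing no explicit determination of the correspondence and no parameters. One then passes from isometries to similitudes by the formal restriction-and-induction comparison of Lemma \ref{L:similitude1}. Two smaller points: the dichotomy ``$\tau|_{\GSO(V)}$ irreducible or $\sigma \oplus \sigma^c$'' is ordinary Clifford theory for the index-two inclusion $\GSO(V) \subset \GO(V)$, not the similitude-to-isometry multiplicity-one theorem of \cite{AP}; and while your conservation argument for the non-participation of $\tau \otimes \nu_0$ is in the right spirit, \cite{KR} as quoted in this paper concerns first occurrences of $\Sp$-representations in orthogonal towers --- the dichotomy you need in the $\OO \to \Sp$ direction with $\dim V > \dim W$ is again Rallis's appendix, which is the reference the paper actually uses.
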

 \vskip 5pt

 \begin{proof}
The analogous result for isometry groups is a well-known result of Rallis \cite[Appendix]{R}
 (see also \cite[\S 5, Pg. 282]{Pr1}). The lemma follows easily from this and we omit the details.
 \end{proof}

\vskip 10pt

 We now collect some results concerning the theta correspondence for  $\GSp(W) \times \GSO(V)$.
 Firstly,  we have:

 \begin{Thm}  \label{T:Howe}
 Let $\pi$ be an irreducible representation of $\GSp(W)$.  Then $\theta(\pi)$ is irreducible or zero as a representation of $\GSO(V)$. Moreover, if $\theta(\pi)  = \theta(\pi') \ne 0$, then $\pi = \pi'$.
  \end{Thm}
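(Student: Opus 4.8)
The plan is to reduce the assertion to the Howe duality property for the \emph{isometry} dual pair $\Sp(W)\times\OO(V)=\Sp_4\times\OO_6$ and then to run the argument of Proposition~\ref{P:super} essentially verbatim. The input I need from the isometry side is that, for every irreducible representation $\tau$ of $\OO(V)$ (resp.\ $\Sp(W)$), the big lift $\Theta_\psi(\tau)$ is irreducible or zero and the partially defined map $\tau\mapsto\theta_\psi(\tau)$ is injective. For $p\neq 2$ this is Waldspurger's theorem. For $p=2$ it has to be proved by hand for this comparatively low-rank pair, and this is the step I expect to be the main obstacle: one would use the Kudla filtration of the Jacquet modules of the Weil representation together with the compatibility of $\Theta_\psi$ with normalized parabolic induction to reduce the statement for a non-supercuspidal $\tau$ to the supercuspidal case --- which is Proposition~\ref{P:Kudla}, valid for all $p$ --- together with Howe duality for the strictly smaller pairs $(\Sp_2,\OO_2)$, $(\Sp_2,\OO_4)$, $(\Sp_4,\OO_2)$, $(\Sp_4,\OO_4)$ occurring as Levi components; the delicate point is keeping track of the non-tempered constituents and of the points of reducibility of the relevant induced representations. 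Alternatively, one may bypass this difficulty by quoting the complete description of the theta correspondence for $\GSp_4\times\GSO_6$ from the companion paper~\cite{GT4}, from which Theorem~\ref{T:Howe} can be read off directly.

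\medskip

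\noindent\textbf{The deduction, granting the isometry input.}
First, the preceding lemma identifies the $\GSp(W)\times\GSO(V)$ correspondence with the $\GSp(W)\times\GO(V)$ correspondence: a participating irreducible of $\GO(V)$ restricts irreducibly to $\GSO(V)$, and of the two characters of $\GO(V)/\GSO(V)$ only one twist participates. So I may work throughout with $\GO(V)$ and $\OO(V)$, where Lemma~\ref{L:similitude1} applies, and the isometry analogue of that lemma (Rallis) likewise lets me pass freely between $\OO(V)$ and $\SO(V)$. Given an irreducible $\pi$ of $\GSp(W)$ with $\Theta(\pi)\neq 0$, I would write $\pi|_{\Sp(W)}=\bigoplus_{i=1}^{k}\tau_i$, which is multiplicity-free by [AP], with $k\in\{1,2,4\}$. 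By Lemma~\ref{L:similitude1}(iii), $\Theta(\pi)$ is admissible of finite length and, since each $\Theta_\psi(\tau_i)=\theta_\psi(\tau_i)$ is irreducible, $\Theta(\pi)|_{\OO(V)}\cong\bigoplus_i\theta_\psi(\tau_i)$, with the $\theta_\psi(\tau_i)$ pairwise non-isomorphic by isometry injectivity. For any irreducible quotient $T$ of $\Theta(\pi)$ as a $\GO(V)$-representation one has $\Hom_R(\Omega,\pi\boxtimes T)\neq 0$ and $T|_{\OO(V)}=\bigoplus_{i\in S}\theta_\psi(\tau_i)$ for some $S\subseteq\{1,\dots,k\}$; applying Lemma~\ref{L:similitude1}(ii) with $\pi_1=T$, $\pi_2=\pi$ and the isometry Howe duality produces a bijection between $\{\theta_\psi(\tau_i):i\in S\}$ and $\{\tau_1,\dots,\tau_k\}$, which forces $S=\{1,\dots,k\}$ and hence $T|_{\OO(V)}\cong\Theta(\pi)|_{\OO(V)}$. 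Since $\Theta(\pi)$ has finite length this forces $T=\Theta(\pi)$, so $\Theta(\pi)$ is irreducible and $\Theta(\pi)=\theta(\pi)$; by the preceding lemma it remains irreducible on restriction to $\GSO(V)$.

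\medskip

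\noindent\textbf{Injectivity.}
For the second assertion I would copy the proof of Proposition~\ref{P:super}(ii). If $\theta(\pi')=\theta(\pi)\neq 0$ for irreducible $\pi,\pi'$ of $\GSp(W)$, then restricting to $\Sp(W)$ and using isometry injectivity gives $\pi'|_{\Sp(W)}\cong\pi|_{\Sp(W)}$; since $\theta(\pi')=\theta(\pi)$ also forces $\omega_{\pi'}=\omega_\pi$ (via $\omega_{\Theta(\pi)}=\chi_V^{\dim W/2}\cdot\omega_\pi$), we get $\pi'=\pi\otimes(\chi\circ\lambda_W)$ for a quadratic character $\chi$. Writing $I(\pi)=\{\chi\ \text{quadratic}:\pi\otimes(\chi\circ\lambda_W)\cong\pi\}$ and defining $I(\theta(\pi))$ analogously, the inclusion $I(\pi)\subseteq I(\theta(\pi))$ follows from $\Theta(\pi\otimes(\chi\circ\lambda_W))\cong\Theta(\pi)\otimes(\chi\circ\lambda_V)$, while $|I(\pi)|$ equals the number of irreducible constituents of $\pi|_{\Sp(W)}$, which by Lemma~\ref{L:similitude1}(ii) equals the number of constituents of $\theta(\pi)|_{\SO(V)}$, i.e.\ $|I(\theta(\pi))|$. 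Hence $I(\pi)=I(\theta(\pi))$, so $\chi\in I(\pi)$ and $\pi'=\pi$. Apart from the isometry Howe duality for $p=2$ flagged above, every step is a formal consequence of Lemma~\ref{L:similitude1}, the two Rallis-type lemmas, [AP], and Clifford theory for the abelian quotients $\GSp(W)/\Sp(W)\cong F^\times$ and $\GSO(V)/\SO(V)\cong F^\times$.
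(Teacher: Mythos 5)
Your proposal correctly identifies the structure of the argument and the one genuine obstacle, and your fallback option agrees with what the paper actually does. The paper's own proof is a two-line citation: for supercuspidal $\pi$ it invokes Proposition~\ref{P:super} (whose input is Kudla's Proposition~\ref{P:Kudla}, valid for \emph{all} residual characteristics but only for supercuspidal representations) together with the lemma on $\GO(V)/\GSO(V)$; for non-supercuspidal $\pi$ it simply cites the explicit determination of the correspondence in the companion paper \cite{GT4}. Your ``alternative'' route --- quote \cite{GT4} and read off Theorem~\ref{T:Howe} --- is therefore exactly the paper's route for the non-supercuspidal case, and your Clifford-theoretic deduction from the isometry input is a faithful rerun of the argument in Proposition~\ref{P:super}.

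Where your proposal diverges is in its primary route, and you are candid that this is where the difficulty lies. To run the argument of Proposition~\ref{P:super} for \emph{all} $\pi$ (not just supercuspidal), you need the full isometry Howe conjecture for $\Sp_4\times\OO_6$ at $p=2$, precisely the thing the paper is careful to avoid assuming. Your sketch of how one would prove it --- Kudla filtration, reduction to the supercuspidal case plus Howe duality for strictly smaller pairs, bookkeeping of non-tempered constituents and reducibility points --- is exactly the content of the explicit computation in \cite{GT4}; it is not an independent shortcut, and as written it is a plan rather than a proof. So ``granting the isometry input'' is granting something that at $p=2$ is only available as a consequence of the very computation the theorem is being compared against. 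The paper sidesteps this by splitting: for supercuspidals, Kudla's result supplies the isometry input unconditionally in all residual characteristics, so Proposition~\ref{P:super} applies; for everything else, one has the explicit tables of \cite{GT4}. This split is the one thing your write-up does not quite make, though your fallback effectively subsumes it.

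Two small inaccuracies worth flagging. First, the claim that $\pi|_{\Sp(W)}$ has $k\in\{1,2,4\}$ constituents is a $p\neq 2$ statement: the orbit size divides $[F^\times:(F^\times)^2]$, which is $8$ or larger when $p=2$. This does not affect the argument, which only uses finiteness. Second, in the sentence ``each $\Theta_\psi(\tau_i)=\theta_\psi(\tau_i)$ is irreducible'' you should say ``irreducible or zero'' and then observe (via Lemma~\ref{L:similitude1}(iii) and the fact that $\Theta(\pi)$ has a central character, so its $\OO(V)$-constituents are permuted transitively) that nonvanishing of one $\Theta_\psi(\tau_i)$ forces nonvanishing of all of them; this is implicit in the paper's Proposition~\ref{P:super} as well, but a careful write-up should make it explicit.
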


\begin{proof}
For supercuspidal representations, the result follows by  Prop. \ref{P:super} and the previous lemma. For non-supercuspidal representations, the result follows by the explicit determination of the theta correspondence for $\GSp(W) \times \GSO(V)$ given in the companion paper 
\cite[Thm.8.3]{GT4}.
\end{proof}
\vskip 5pt

Suppose now that $U$ (resp. $U_0$) is the unipotent radical of a Borel subgroup of $\GSp(W)$ (resp. $\GSO(V)$) and $\chi$ (resp. $\chi_0$) is a generic character of $U$ (resp. $U_0$). One may compute the twisted Jacquet module $\Omega_{U_0,\chi_0}$. The following lemma (see \cite[Prop. 7.4]{GT1} and \cite[Prop. 4.1]{MS}) describes the result:
\vskip 5pt

\begin{Lem} \label{L:Whit}
As a representation of $\GSp(W)$,
\[  \Omega_{U_0,\chi_0} = ind_U^{\GSp(W)} \chi. \]
In particular, if $\pi$ is an irreducible  generic representation of $\GSp(W)$, then $\Theta(\pi)$ is nonzero.
\end{Lem}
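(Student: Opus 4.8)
\noindent\emph{Proof strategy.} The displayed identity is the substantive point; given it, the ``in particular'' clause follows formally by Frobenius reciprocity. For the identity one may adapt the arguments of \cite[Prop.~7.4]{GT1} and \cite[Prop.~4.1]{MS}, as follows.

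I would work with $\Omega$ in a model adapted to $\GO(V)$. In the Schr\"odinger model, $\omega_\psi$ is realized on $\mathcal S(X\otimes V)$ with all of $\OO(V)$ acting geometrically, $\phi\mapsto\phi\circ h^{-1}$; since $U_0$ is unipotent it lies in $\SO(V)$, so this geometric action persists on $\Omega=\ind_{R_0}^R\omega_\psi$, which can be realized as a Schwartz space over $F^\times\times(X\otimes V)$ on which $U_0$ acts only through the $X\otimes V$ variable. I would then apply the twisted Jacquet functor $(-)_{U_0,\chi_0}$ and compute it, following the Bernstein--Zelevinsky method, from the stratification of $X\otimes V$ by $U_0$-orbits. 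Using the presentation $\GSO(V_D)\cong(\GL_4\times\GL_1)/\{(z,z^{-2})\}$ — under which $U_0$ is essentially the upper triangular unipotent subgroup of $\GL_4$ and $\chi_0$ a standard generic character — and the dimension count $\dim(X\otimes V)+1-\dim U_0=13-6=7=\dim\bigl(U\backslash\GSp(W)\bigr)$, one expects a single relevant $\GSp(W)$-stable stratum, isomorphic as a $\GSp(W)$-space to $U\backslash\GSp(W)$ with the character $\chi$ built in. The practical way to carry this out — and to avoid confronting the Fourier-transform (Weyl-element) part of the $\GSp(W)$-action — is to integrate out the root subgroups of $U_0$ one at a time, interleaving each step with a compatible partial Fourier transform on $\mathcal S(X\otimes V)$ (the ``exchange of roots''): at each stage the group elements that still act do so geometrically, so the partial Jacquet module remains a Schwartz space, and in the end one lands on $\mathcal S$ of a single $\GSp(W)$-orbit. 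A concluding reckoning of the similitude characters present — $\chi_V$, $\lambda_V$, $\lambda_W$, and the normalizing factor $|\lambda_V(h)|^{-\frac18\dim V\dim W}$ — confirms that the surviving twist is exactly the generic character $\chi$ of $U$ and that no residual similitude character remains, so that the answer is $\ind_U^{\GSp(W)}\chi$ and not a twist of it.

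Granting the identity, the ``in particular'' clause follows at once. The canonical projection $\Omega\twoheadrightarrow\Omega_{U_0,\chi_0}$ is $\GSp(W)$-equivariant, since $\GSp(W)$ commutes with $U_0\subset\GO(V)$. On the other hand, an irreducible generic representation $\pi$ of $\GSp(W)$ occurs as a quotient of $\ind_U^{\GSp(W)}\chi$: this is the contragredient form of genericity, obtained from $(\ind_U^{\GSp(W)}\chi)^\vee\cong\Ind_U^{\GSp(W)}\chi^{-1}$ (the relevant modulus characters being trivial since $U$ is unipotent) together with ordinary Frobenius reciprocity, using that $\pi^\vee$ is again generic. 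Composing the two surjections, $\pi$ is a quotient of $\Omega$, so $\Hom_{\GSp(W)}(\Omega,\pi)\neq0$ and hence $\Theta(\pi)\neq0$.

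The crux is the orbit analysis: one must verify that exactly one stratum survives $(-)_{U_0,\chi_0}$ and — the more delicate half — that the residual $\GSp(W)$-module structure on it is the \emph{geometric} action on $U\backslash\GSp(W)$ twisted by $\chi$, despite the fact that $\GSp(W)$ a priori acts on $\Omega$ through Fourier transforms. The staged exchange-of-roots argument keeps this under control, but confirming that no spurious similitude twist slips in requires the normalization bookkeeping mentioned above.
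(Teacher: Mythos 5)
Your proposal is correct and follows essentially the same approach as the paper. The paper supplies no proof of its own for this lemma, citing instead \cite[Prop.~7.4]{GT1} and \cite[Prop.~4.1]{MS}; your sketch — computing $\Omega_{U_0,\chi_0}$ via Bernstein--Zelevinsky stratification of $X\otimes V$ with the exchange-of-roots/partial-Fourier-transform technique to keep the $\GSp(W)$-action geometric, together with the dual-Frobenius-reciprocity deduction of the ``in particular'' clause — is precisely the method used in those references.
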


\vskip 10pt

Finally we describe the functoriality of the above
theta correspondence for unramified representations.
The $L$-group of $\GSp(W)$ is $\GSp_4(\C)$ and so an unramified representation of $\GSp(W)$ corresponds to a semisimple class in $\GSp_4(\C)$.
On the other hand, the $L$-group of $\GSO(V)$ is the subgroup of $\GL_4(\C) \times \GL_1(\C)$ given by
\[  {^L}\GSO(V) = \{(g,z) \in \GL_4(\C) \times \GL_1(\C): \det(g) = z^2 \}, \]
which is isomorphic to the group $\GSpin_6(\C)$. There is a natural map
\[  \iota:  {^L}\GSp_4  \longrightarrow  {^L}\GSO(V) \]
given by
\[  g \mapsto (g, \simi(g)) \]
where $\simi: \GSp_4(\C) \rightarrow \C^{\times}$ is the similitude factor. The following is shown in the companion paper \cite[Cor. 12.3]{GT4}:

 \begin{Prop}  \label{P:unram}
 Let $\pi = \pi(s)$ be an unramified representation of $\GSp(W)$ corresponding to the semisimple class $s \in \GSp_4(\C)$. Then $\theta(\pi(s))$ is the unramified representation of $\GSO(V)$ corresponding to the semisimple class $\iota(s) \in \GL_4(\C) \times \GL_1(\C)$.
\end{Prop}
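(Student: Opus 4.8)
The plan is to reduce the Proposition to two assertions: that $\theta(\pi(s))$ is a nonzero \emph{unramified} representation of $\GSO(V)$, and that its Satake parameter equals $\iota(s)$. Both will be read off from the explicit lattice model of the Weil representation, together with the standard unramified computations — the Jacquet functors of the Weil representation along maximal parabolics (or, alternatively, the unramified doubling zeta integral) for the identification of the parameter, and either of these for nonvanishing. Throughout, since $V$ is the split quadratic space of dimension $6$ we have $\lambda_V$ surjective, so $\Omega$ is genuinely a representation of $R = \GSp(W) \times \GO(V)$ and there is no $\GSp(W)^+$-versus-$\GSp(W)$ bookkeeping to do.

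First I would fix $\psi$ of conductor $\mathcal{O}_F$ and realize $\omega_\psi$ on $S(X \otimes V)$, choosing a self-dual $\mathcal{O}_F$-lattice $L \subset X \otimes V$ adapted to hyperspecial maximal compacts $K \subset \GSp(W)$ and $K' \subset \GO(V)$. Then $\phi_0 = \mathbf 1_L$ spans the line of $(K \times K')$-fixed vectors in $\omega_\psi$, so $\Omega = \ind_{R_0}^{R}\omega_\psi$ carries a canonical $(K \times K')$-fixed vector. For unramified $\pi(s)$, any nonzero $R$-equivariant map $\Omega \to \pi(s) \otimes \Theta(\pi(s))$ sends this vector to a $K'$-fixed vector of $\Theta(\pi(s))$ lying in the eigenline for the spherical Hecke character of $\pi(s)$; hence as soon as $\Theta(\pi(s)) \ne 0$, the lift $\theta(\pi(s))$ is unramified, and by Theorem~\ref{T:Howe} it is then irreducible and nonzero. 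The nonvanishing $\Theta(\pi(s)) \ne 0$ is Lemma~\ref{L:Whit} when $\pi(s)$ is generic, and in general it drops out of the Hecke computation in the next paragraph (it may also be deduced from the tower results of \cite{KR} together with Lemma~\ref{L:similitude1}(i)).

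It remains to compute the Satake parameter. I would reformulate the unramified correspondence as a statement about the spherical Hecke bimodule $\Omega^{K \times K'}$ over $\mathcal{H}_G = \mathcal{H}(\GSp(W),K)$ and $\mathcal{H}_H = \mathcal{H}(\GSO(V),K')$: via the Satake isomorphisms its support is a set of pairs $(s,t)$ with $s \in \GSp_4(\C)$ and $t \in \GSpin_6(\C) = {^L}\GSO(V)$, and the Proposition is exactly the assertion that this support is the graph $\{(s,\iota(s))\}$, with one-dimensional eigenspaces. To pin this down I would restrict $\Omega$ along the Siegel parabolic (and the Klingen parabolic) of $\GSp(W)$, using Kudla's formulas for the Jacquet modules of the Weil representation to rewrite these restrictions in terms of the Weil representations for the smaller dual pairs sitting inside the Levi subgroups; iterating down to the Borel reduces everything to the theta correspondence for split tori, which is transparent, while the accidental isomorphism $\GSO(V) \cong (\GL_4 \times \GL_1)/\{(z \cdot \mathrm{Id},\ z^{-2})\}$ is what lets one name the corresponding parabolic of $\GSO(V)$ and read off the relevant torus character. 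Matching the resulting unramified characters yields $t = \iota(s)$. Because $s \mapsto \iota(s)$ is a morphism of algebraic groups and the correspondence is already a well-defined injection with irreducible values (Theorem~\ref{T:Howe}), it suffices to verify the identification on the Zariski-dense set of $s$ with $\Ind_B^{\GSp(W)}\chi$ irreducible, the remaining reducible (and possibly non-generic) cases following by continuity of the Satake parameter along the unramified family.

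The step I expect to be the main obstacle is this last Hecke-module computation: it is the one place where the specific map $g \mapsto (g,\simi(g))$ is actually produced, and carrying it out requires faithfully tracking the identification of the split rank-six orthogonal tower with the $\GL_4 \times \GL_1$ picture through the Jacquet functors, the normalizations of the Weil representations on the Levi subgroups, and the twists by $\chi_V$ and by powers of $\lambda_V$ appearing in Sections~\ref{S:theta}--\ref{S:dual}. An equivalent route avoiding parabolic restriction is to compute the unramified local doubling zeta integral directly: on spherical data it evaluates to a nonzero ratio of unramified $L$-factors, which simultaneously re-proves $\Theta(\pi(s)) \ne 0$ and, via the Rallis inner product formula in a global model, identifies the standard $L$-function — hence the Satake parameter — of $\theta(\pi(s))$ with that of $\iota(s)$.
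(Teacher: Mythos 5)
The paper defers this entirely to the companion paper \cite{GT4} (Cor.~12.3); there the argument is exactly the one you single out in your third paragraph: compute the Jacquet modules of $\Omega$ along the parabolics of $\GSp(W)$ and $\GSO(V)$ via Kudla's filtration, identify the resulting induced modules on the $\GSO(V)\cong(\GL_4\times\GL_1)/\Delta$ side, and read off that the unramified subquotient $\pi(s)$ of $I_B(\chi_1,\chi_2;\chi)$ has $\theta(\pi(s))$ equal to the unramified constituent of $\chi\cdot I_{B_0}(1,\chi_2,\chi_1,\chi_1\chi_2)\boxtimes\chi^2\chi_1\chi_2$, which is $\iota(s)$. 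So you have located the right computation. Two of your preparatory steps, however, are not in place.

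First, the existence of a $(K\times K')$-fixed vector $\phi_0\in\Omega$ does not by itself show that $\theta(\pi(s))$ is unramified once it is nonzero: the projection $\Omega\twoheadrightarrow\pi(s)\boxtimes\Theta(\pi(s))$ could annihilate $\phi_0$, and you have not ruled this out (doing so is itself a spherical matrix-coefficient or doubling computation). Second, the reduction to the Zariski-dense set of $s$ with $I_B(\chi)$ irreducible ``by continuity of the Satake parameter'' is not justified as stated: $\theta$ is a pointwise construction (maximal isotypic quotient), and no algebraic family has been produced; one would need finite generation of the Hecke bimodule $\Omega^{K\times K'}$ with closed support, or the doubling $L$-function identity you mention in passing, to know the parameter of $\theta(\pi(s))$ moves algebraically with $s$. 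The companion paper sidesteps both points by running the Jacquet-module argument for arbitrary unramified $\chi$, reducible or not: Frobenius reciprocity on the $\GSp(W)$ side shows $\Theta(\pi)$ is a quotient of an explicit unramified standard module of $\GSO(V)$ with unique irreducible quotient $\Pi$, Frobenius reciprocity on the $\GSO(V)$ side symmetrically gives $\theta(\Pi)\subseteq\pi$, and together with nonvanishing this forces $\theta(\pi)=\Pi$ with no irreducibility or deformation assumption, yielding unramifiedness and the identity of Satake parameters simultaneously. If you want to keep your organization, the doubling/Rallis inner-product route you name at the end is the cleanest way to supply both missing steps, but it should then lead the argument rather than appear as an aside.
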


\vskip 15pt

\section{\bf On Certain $L$-functions} \label{S:certain}

In this section, we introduce certain $L$-functions, $\epsilon$-factors and $\gamma$-factors which we will need. These local factors were defined by Shahidi \cite{Sh}. To specify them more precisely, we need to consider certain representations of the relevant $L$-groups.
\vskip 10pt

Recall that we have an inclusion of $L$-groups:
\[  \iota:  {^L}\GSp_4  \longrightarrow  {^L}\GSO(V) \cong \GSpin_6(\C) \subset \GL_4(\C) \times \GL_1(\C). \]
The projection of $\GL_4(\C) \times \GL_1(\C)$ onto the first factor thus defines a natural 4-dimensional representation of ${^L}\GSO(V)$ (one of the half-spin representations of $\GSpin_6(\C)$) whose restriction to ${^L}\GSp_4 \cong \GSp_4(\C)$ is the natural 4-dimensional representation of $\GSp_4(\C)$. Following a terminology common in the literature, we call this representation the Spin representation of ${^L}\GSp_4$ and ${^L}\GSO(V)$.
\vskip 5pt

Now, corresponding to the inclusion  $\SO(V) \hookrightarrow \GSO(V)$, one has a map of $L$-groups
\[  std: {^L}\GSO(V) \longrightarrow {^L}\SO(V) = \SO_6 (\C). \]
Indeed, one has the map
\[  \GL_4(\C) \times \GL_1(\C) \longrightarrow \GSO_6 (\C) \]
given by
\[  (g,z) \mapsto  z^{-1} \cdot \wedge^2 g, \]
and the map $std$ is simply the restriction of this map to the subgroup ${^L}\GSO(V) \subset \GL_4(\C) \times \GL_1(\C)$.
Similarly, corresponding to the inclusion $Sp(W) \hookrightarrow \GSp(W)$, we have a map of $L$-groups
\[  std: \GSp_4(\C) \longrightarrow \SO_5(\C) \cong \PGSp_4(\C) \]
and a commutative diagram
\[  \begin{CD}
\GSp_4(\C) @>\iota>> {^L}\GSO(V) @>>> \GL_4(\C) \times \GL_1(\C) \\
@VstdVV      @VVstdV    @VVstdV\\
\SO_5(\C)  @>\iota_0>>  \SO_6 (\C) @>>>\qquad \GSO_6 (\C)\qquad .\end{CD} \]
We regard $std$ as a 5-dimensional (resp. 6-dimensional) representation of ${^L}\GSp_4$ (resp. ${^L}\GSO(V))$ and call it the standard representation.
\vskip 5pt

Thus, for a representation $\pi$ of $\GSp_4(F)$, one expects to  be able to define a
standard degree 5 $L$-function $L(s, \pi, std)$ and a degree 4 Spin $L$-function $L(s, \pi, spin)$.
More generally, for representations $\pi$ of $\GSp_4(F)$ and $\sigma$ of $\GL_r(F)$, one expects to have the $L$-functions
\[  L(s, \pi \times \sigma, std \boxtimes std) \quad \text{and} \quad L(s, \pi \times \sigma, spin \boxtimes std), \]
which are associated to the representations $std \boxtimes std$ and $spin \boxtimes std$ respectively.
\vskip 5pt

Similarly, given a representation $\Sigma$ of $\GSO(V)$, one expects to have the degree 6 $L$-function $L(s, \Sigma, std)$ and the degree 4 $L$-function $L(s,  \Sigma, spin)$. More generally, for representations $\Sigma$ of $\GSO(V)$ and $\sigma$ of $\GL_r(F)$, one expects to have the $L$-functions
\[ L(s, \Sigma \times \sigma , std \boxtimes std) \quad \text{and} \quad  L(s, \Sigma \times \sigma, spin \boxtimes std). \]
Moreover, if we regard $\Sigma$ as a representation $\Pi \boxtimes \mu$ of $\GL_4(F) \times \GL_1(F)$, then $L(s,\Sigma \times \sigma,spin \boxtimes std)$ should be nothing but the Rankin-Selberg $L$-function $L(s, \Pi \times \sigma)$ of the representation $\Pi \boxtimes \sigma$ of $\GL_4(F) \times \GL_r(F)$. Further, one expects that
\[ L(s, \Sigma, std) =  L(s, \Pi, {\bigwedge}^2 \otimes \mu^{-1}), \]
where the $L$-function on the RHS is the twisted exterior square $L$-function.
\vskip 5pt

In the important paper \cite{Sh}, the above $L$-functions and their associated $\epsilon$-factors were defined by Shahidi when the representation $\pi \boxtimes \sigma$ or $\Sigma$ is generic. More precisely, suppose that
\vskip 5pt
\begin{itemize}
\item $M \subset G$ is the Levi subgroup of a maximal parabolic subgroup $P = M \cdot N$;
\vskip 5pt

\item $\tau$ is an irreducible generic representation of $M(F)$;
\vskip 5pt

\item the adjoint action of the dual group $M^{\vee}$ on $\mathfrak{n}^{\vee} = Lie(N^{\vee})$ decomposes as $r_1 \oplus r_2 \oplus...\oplus r_k$, where each $r_i$ is a maximal  isotypic component for the action of the central torus in $M^{\vee}$.
\end{itemize}
\vskip 5pt

\noindent Then Shahidi defined the local factors $\gamma(s, \tau, r_i, \psi)$, $L(s, \tau, r_i)$, $\epsilon(s, \tau, r_i,\psi)$ which satisfy
\[  \gamma(s, \tau,r_i,\psi) = \epsilon(s, \tau, r_i,\psi) \cdot \frac{L(1-s, \tau^{\vee}, r_i)}{L(s,\tau, r_i)}. \]
\vskip 5pt

In Table \ref{table}, we list the various $L$-functions that we will use, and the data $(M,G, \tau)$ which are used in their definition via the Shahidi machinery.

\begin{table}
\centering \caption{On Certain $L$-functions}
\label{table} \vspace{-3ex}
$$
\renewcommand{\arraystretch}{1.5}
 \begin{array}{|c|c|c|c|c|c|}
  \hline
&  \mbox{$L$-function} & \mbox{M}
   & \mbox{G} & \mbox{$r_i$} & \tau  \\\hline\hline
  \mbox{a} & L(s, \pi \times \sigma, spin\boxtimes std) &  &
   & r_1 = spin^{\vee} \boxtimes std &  \\  \cline{1-2} \cline{5-5}
\mbox{b} & L(s, \sigma, Sym^2 \otimes \omega_{\pi}) & \raisebox{2.5ex}[-2.5ex]{\mbox{$\GSpin_5 \times \GL_r$}} &  \raisebox{2.5ex}[-2.5ex]{\mbox{$\GSpin_{2r+5}$}} & r_2 = sim^{-1} \otimes (Sym^2 std) &
  \raisebox{2.5ex}[-2.5ex]{\mbox{$\pi^{\vee} \boxtimes \sigma$}} \\ \hline
\mbox{c} &   L(s, \pi \times \sigma, std \boxtimes std) &   \GSp_4 \times \GL_r   & \GSp_{2r+4}
   &    r_1 = std^{\vee} \boxtimes std & \pi^{\vee} \boxtimes \sigma  \\ \hline
\mbox{d} & L(s, \Sigma , std)  & \GSO_6  \times \GL_1 & \GSO_8 & r_1 = std^{\vee} \boxtimes std &
\Sigma^{\vee}  \boxtimes {\bf 1}  \\ \hline
\mbox{e}  & L(s, \Pi \times \sigma)  & \GL_4 \times \GL_r & \GL_{r+4}
   &  r_1 = std \boxtimes std^{\vee} & \Pi \boxtimes \sigma^{\vee} \\ \hline
\mbox{f}  & L(s, \Pi, \bigwedge^2 \otimes \mu^{-1}) & \GL_4 \times \GL_1 & \GSpin_8
   &  r_2 = (\bigwedge^2 std) \boxtimes std^{\vee} & \Pi \boxtimes \mu \\ \hline
  \end{array}
$$
\end{table}

\vskip 10pt

Though Shahidi defined the $L$-functions in Table \ref{table} only for generic representations, the definition can be extended to non-generic non-supercuspidal representations of the simple factors of the groups $M$ which occur in the table.  This uses the Langlands classification, which says that every  irreducible admissible representation can be expressed as the unique quotient of a standard module, i.e. one induced from a non-negative twist of a discrete series representation of a Levi subgroup.
For the groups $M$ occurring in Table \ref{table}, their simple factors have proper Levi subgroups which  are essentially products of $\GL_k$'s, so that a discrete series  representation of such a proper Levi subgroup is generic. Moreover, the restriction of each representation $r_i$ to a proper Levi subgroup of $M$ decomposes into the sum of irreducible constituents, all of which appears in the setup of Shahidi's theory. Thus, one may extend the definition of  the local factors to all non-generic non-supercuspidal representations of each simple factor of $M$ by multiplicativity (with respect to the standard module under consideration). Thus, the local factors given in Table \ref{table} are defined except when the representation $\pi$ of $\GSp_4(F) \cong \GSpin_5(F)$ is non-generic supercuspidal.
\vskip 10pt

The $L$-function $L(s, \pi \times \sigma, spin\boxtimes std)$ in (a) of Table \ref{table} is the main one which intervenes in our main theorem. Hence,  we shall simplify notations by writing it as $L(s, \pi \times \sigma)$, suppressing the mention of $spin \boxtimes std$. The same comment applies to the
$\epsilon$- and $\gamma$-factors.
\vskip 10pt

Finally, we note the following two lemmas:
\vskip 5pt

\begin{Lem} \label{L:4.1}
Let $\Sigma$ be an irreducible generic representation of $\GSO(V)$ which we may identify with a representation $\Pi \boxtimes \mu$ of $\GL_4(F) \times \GL_1(F)$ via the isomorphism
\[ \GSO(V) \cong \GL_4(F) \times \GL_1(F)/ \{(t,t^{-2}): t \in F^{\times}\}. \]
 Then we have:
\[  L(s, \Sigma, std) = L(s, \Pi, {\bigwedge}^2 \otimes \mu^{-1}), \]
where the $L$-function on the LHS (resp. RHS) is that in (d) (resp. (f)) of the above table. Moreover, one has the analogous identity for the $\epsilon$- and $\gamma$-factors.
\end{Lem}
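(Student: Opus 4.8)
The plan is to exhibit the two Langlands--Shahidi situations recorded in rows (d) and (f) of Table~\ref{table} as pullbacks of one another along an isogeny, so that the local factors produced by Shahidi's machinery are literally the same object. First I would unwind the two recipes on the Galois-parameter level as a sanity check and to fix the bookkeeping: by the accidental isomorphism $\GSO(V) = \GSO_6 \cong (\GL_4 \times \GL_1)/\{(t,t^{-2}): t \in \GL_1\}$ of Section~\ref{S:dual} a generic $\Sigma$ is $\Pi \boxtimes \mu$ with the compatibility $\omega_\Pi = \mu^2$, and dually ${}^L\GSO(V) = \{(g,z) \in \GL_4(\C)\times\GL_1(\C) : \det g = z^2\}$ with the $6$-dimensional representation $std$ being the restriction of $(g,z)\mapsto z^{-1}\cdot\wedge^2 g$. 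Composing with the parameter of $\Sigma$, the datum $(\Sigma^\vee\boxtimes{\bf 1},\,std^\vee\boxtimes std)$ of row (d) and the datum $(\Pi\boxtimes\mu,\,(\wedge^2 std)\boxtimes std^\vee)$ of row (f) have the same ``shadow'' $\wedge^2\phi_\Pi\otimes\mu^{-1}$; the point of the lemma is that the two \emph{analytic-side} recipes which are supposed to compute it in fact agree (note we are \emph{not} invoking any known instance of Langlands--Shahidi $=$ Artin here --- that comparison, via \cite{He2}, is used elsewhere).

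Next I would match the group-theoretic data. The standard isogeny $\GSpin_8 \to \GSO_8$ carries the maximal parabolic with Levi $\GL_4 \times \GL_1$ onto the maximal parabolic of $\GSO_8$ with Levi $\GSO_6 \times \GL_1$: the derived group $\SL_4 = \Spin_6$ of the $\GL_4$-factor maps onto $\SO_6 = \SL_4/\mu_2 \subset \SO_8$, while the $\GL_1$-factors (and the two similitude characters) correspond, which is exactly the content of the quotient presentation of $\GSO_6$ above. Under this identification one checks directly that the representation $r_1 = std^\vee\boxtimes std$ of ${}^L(\GSO_6\times\GL_1)$ appearing in row (d) pulls back to $r_2 = (\wedge^2 std)\boxtimes std^\vee$ of ${}^L(\GL_4\times\GL_1)$ appearing in row (f); the only thing to watch is the $\GL_1$-twisting, where the $\wedge^2$-normalization on $\GSpin_6(\C)$ and the $std$ versus $std^\vee$ discrepancy on the $\GL_1$-factor are absorbed into the passage $\mu \leftrightarrow \mu^{-1}$ (this is forced once the Galois shadows are seen to agree, as in the first paragraph). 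Since $\GSO_8$ is the quotient of $\GSpin_8$ by the central $\mu_2$, the generic representation $\Sigma^\vee\boxtimes{\bf 1}$ of $\GSO_6\times\GL_1$ pulls back to $\Pi^\vee\boxtimes{\bf 1}$ (equivalently $\Sigma^\vee$ viewed upstairs), a generic representation of $\GL_4\times\GL_1$, and the whole Langlands--Shahidi configuration --- intertwining operators on the induced representations, their normalizations, Whittaker functionals --- pulls back compatibly.

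It then follows from the compatibility of Shahidi's local factors with isogenies (equivalently, from their multiplicativity and the characterization of $L(s,\tau,r_i)$ in terms of the simple factors of the Levi and the representations $r_i$, as in \cite{Sh}) that $L(s,\Sigma,std)$ and $L(s,\Pi,\wedge^2\otimes\mu^{-1})$ coincide, and likewise for the $\epsilon$- and $\gamma$-factors. The main obstacle is the second step: pinning down precisely the isogeny $\GSpin_8 \to \GSO_8$ together with its effect on the chosen maximal parabolics and on the decompositions of $\mathfrak{n}^\vee$, and then tracking the resulting $\GL_1$-twists, the dualities $\Sigma$ versus $\Sigma^\vee$, and $std$ versus $std^\vee$ on the $\GL_1$-factors so that $r_1$ and $r_2$ really are identified. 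Once this bookkeeping is in place, the conclusion is a formal consequence of the foundational properties of the Langlands--Shahidi construction.
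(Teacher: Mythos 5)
Your proposal takes a genuinely different route from the paper, and it contains a gap at its pivotal step. The paper proves Lemma~\ref{L:4.1} in one line, by appealing to the uniqueness/characterization theorem for Shahidi's local coefficients in \cite[Thm.~3.5]{Sh}: the $\gamma$-factors $\gamma(s,\tau,r_i,\psi)$ form the unique system satisfying multiplicativity, the crude global functional equation, and compatibility with Artin factors at archimedean and unramified places. Both rows (d) and (f) of Table~\ref{table} produce families whose Artin-side incarnation is the same object $\wedge^2\phi_\Pi\otimes\mu^{-1}$, so the uniqueness statement forces them to agree, and the $L$- and $\epsilon$-factors follow. The entire burden of comparing the two Levi configurations is carried by that ready-made theorem, with no group-theoretic unraveling required.

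The step you flag as ``the main obstacle'' --- the assertion that the central isogeny $\GSpin_8 \to \GSO_8$ carries the maximal parabolic of $\GSpin_8$ with Levi $\GL_4\times\GL_1$ onto the maximal parabolic of $\GSO_8$ with Levi $\GSO_6\times\GL_1$ --- is not merely difficult bookkeeping; as stated it is false. A central isogeny between connected reductive groups induces a type-preserving bijection of conjugacy classes of parabolic subgroups, so the spinor-type (Siegel) parabolic of $\GSpin_8$ with Levi $\GL_4\times\GL_1$ maps onto a spinor-type parabolic of $\GSO_8$, whose Levi is again a $\GL_4$-type group, and \emph{not} onto the vector parabolic with Levi $\GSO_6\times\GL_1$. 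The latter is the stabilizer of an isotropic line; the two $A_3$-type configurations in rows (d) and (f) are exchanged by triality, which is an outer automorphism of $\Spin_8$ that does not descend along $\GSpin_8\to\GSO_8$. Tracking this correctly would require invoking triality, reconciling the mismatched labelings ($r_1$ in row (d) versus $r_2$ in row (f), which index different levels of the respective nilradical filtrations), and matching the $\Sigma$-versus-$\Sigma^{\vee}$ and $std$-versus-$std^{\vee}$ twists --- at which point one is essentially reproving the special case of the characterization theorem needed here. The closing remark that the $\GL_1$-twists are ``forced once the Galois shadows are seen to agree'' is also circular in this setting: the agreement of the Galois-side parameters is precisely the input that the uniqueness theorem converts into agreement of the analytic-side Shahidi factors, so it cannot be used as an intermediate step in a direct comparison. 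The clean path here is the one the paper takes.
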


\vskip 10pt

\begin{Lem} \label{L:4.2}
Suppose that $\Sigma$ is an irreducible generic representation of a similitude group $\GSp(W)$ or $\GSO(V)$ and $\Sigma_0$ is
an irreducible constituent of the restriction of $\Sigma$ to the isometry group $\Sp(W)$ or $\SO(V)$.  Then one has:
\[  L(s, \Sigma, std) = L(s, \Sigma_0, std)\quad \text{and} \quad
\epsilon(s,\Sigma, std, \psi) = \epsilon(s,\Sigma_0, std, \psi). \]
 \end{Lem}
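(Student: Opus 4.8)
The plan is to reduce the identity of similitude $L$-factors to the identity of isometry $L$-factors by inspecting the inductive definition of the Shahidi local factors through the relevant parabolic induction, exploiting that the standard $L$-group map factors compatibly through the isometry–similitude inclusion. The key observation is that the standard representation of ${}^L\GSp(W)$ (resp. ${}^L\GSO(V)$) is, by its very construction, the pullback under $std\colon {}^L\GSp(W)\to {}^L\Sp(W)=\SO_5(\C)$ (resp. $std\colon {}^L\GSO(V)\to {}^L\SO(V)=\SO_6(\C)$) of the standard representation of the isometry dual group; this is precisely the content of the commutative diagram in \S\ref{S:certain}. Thus, at the level of $L$-parameters, if $\phi$ is any parameter for $\Sp(W)$ or $\SO(V)$ obtained by composing a parameter $\tilde\phi$ for the similitude group with $std$, then $std\circ\tilde\phi = \phi$ as representations of $WD_F$, so the Artin-type factors agree on the nose. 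The work is therefore to match the Shahidi factors — defined representation-theoretically, not through parameters — for $\Sigma$ and for a constituent $\Sigma_0$.

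First I would recall the setup behind rows (c) and (d) of Table \ref{table}: $L(s,\Sigma,std)$ for a similitude group is defined via the family of induced representations $I_P(\Sigma,s)$ on the ambient group (e.g. $\GSp_{2r+4}$ or $\GSO_8$), using the Langlands–Shahidi method on the maximal parabolic whose Levi has $\Sp(W)$ or $\SO(V)$ (enhanced to similitudes) as a factor. The isometry factor $L(s,\Sigma_0,std)$ arises from the strictly analogous construction on the isometry side. I would then invoke the standard compatibility of the Langlands–Shahidi local coefficient with restriction from a similitude group to its derived/isometry subgroup: the intertwining operators for $I_P(\Sigma,s)$ restrict to those for $I_{P_0}(\Sigma_0,s)$, the Whittaker functionals are compatible (genericity of $\Sigma$ forces genericity of $\Sigma_0$, and the generic constituent is well-defined since restrictions are multiplicity-free by \cite{AP}, as used in Lemma \ref{L:similitude1}), and hence the local coefficients — and therefore the $\gamma$-factors — coincide. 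Since the $\gamma$-factor determines the pair $(L,\epsilon)$ once one pins down the $L$-factor as the denominator with the correct arithmetic normalization (which again is read off from the isometry parameter), the equality of $L$- and $\epsilon$-factors follows. Concretely, one may also argue via the explicit dictionary of \S\ref{S:certain}: writing $\Sigma$ on the $\GSO(V)$ side as $\Pi\boxtimes\mu$, Lemma \ref{L:4.1} already identifies $L(s,\Sigma,std)$ with a twisted exterior square $L$-factor of $\GL_4$, and the restriction to $\SO(V)$ simply trivializes the similitude twist $\mu$ in a manner matching the map $(g,z)\mapsto z^{-1}\wedge^2 g$, so the two sides visibly agree.

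The main obstacle I anticipate is the bookkeeping of normalizations: the similitude Langlands–Shahidi construction carries an extra central torus (the similitude line), and one must check that the $s$-variable and the additive character $\psi$ enter identically on the two sides so that no spurious shift or unramified twist creeps in — this is exactly the kind of normalization discrepancy flagged in \S\ref{S:theta} (``Note that this differs from the normalization used in \cite{Ro1}''). I would handle this by pinning everything down on unramified principal series first, where Proposition \ref{P:unram} and the explicit diagram of $L$-group maps force the match, and then extending by the multiplicativity of $\gamma$-factors and the density/deformation arguments standard in the Langlands–Shahidi theory. Granting those formal properties — which are part of the ``Ten Commandments'' package referenced in the introduction — the proof is essentially a diagram chase, and I would omit the routine verifications, much as the authors do for the analogous Rallis result in the proof of the preceding lemma.
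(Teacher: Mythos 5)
Your proposal is correct and lands on the same underlying mechanism as the paper, which disposes of Lemma \ref{L:4.2} (together with Lemma \ref{L:4.1}) in one line: both ``follow from the characterization of Shahidi's local factors given in [Sh, Thm.\ 3.5].'' The uniqueness/characterization theorem you gesture at via the ``Ten Commandments'' remark, and the compatibility of the local coefficient under restriction from the similitude group to the isometry group, are exactly the content of that citation; the paper simply invokes it rather than unwinding it. Your more explicit discussion of intertwining operators, Whittaker functionals, the multiplicity-one input from [AP], and the unramified-first-then-multiplicativity deformation argument is all correct and constitutes what a careful reader would supply to justify the citation, but none of it is a departure from the paper's route. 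One small point worth tightening: the $L$- and $\epsilon$-factors on the isometry side are only defined for a generic constituent $\Sigma_0$, so the clause ``the generic constituent is well-defined'' is doing real work and should be stated up front (genericity of $\Sigma$ guarantees some constituent is generic with respect to some Whittaker datum, and Shahidi's factors are independent of that choice); with that caveat your argument is sound.
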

\vskip 5pt

\noindent Both of these lemmas follow from the characterization of Shahidi's local factors given in \cite[Thm. 3.5]{Sh}.

 \vskip 15pt

\section{\bf The Results of Kudla-Rallis, Mui\'{c}-Savin and Henniart}  \label{S:KRMS}\
In this section, we review some crucial general results of Kudla-Rallis \cite{KR}, Mui\'{c}-Savin \cite{MS} and Henniart \cite{He2} before specializing them to the cases of interest in this paper.
\vskip 5pt

Let $W_n$ be the 2n-dimensional symplectic vector space with associated
symplectic group $\Sp(W_n)$ and consider the two towers of orthogonal groups attached to the quadratic spaces with trivial discriminant. More precisely, let
\[  V_m = \mathbb{H}^m \quad \text{and} \quad V^{\#}_m = D \oplus \mathbb{H}^{m-2} \]
and denote the orthogonal groups by $\OO(V_m)$ and $\OO(V^{\#}_{m})$ respectively.
For an irreducible representation $\pi$ of $\Sp(W_n)$, one may consider the theta lifts
$\theta_m(\pi)$ and $\theta^{\#}_m(\pi)$ to $\OO(V_m)$ and $\OO(V^{\#}_{m})$ respectively (with respect to
a fixed non-trivial additive character $\psi$). Set
\[  \begin{cases}
m(\pi) = \inf \{ m: \theta_m(\pi) \ne 0 \}; \\
m^{\#}(\pi) = \inf \{m: \theta^{\#}_m(\pi) \ne 0 \}.
\end{cases} \]
\vskip 5pt

\noindent Then Kudla and Rallis  \cite[Thms. 3.8 \& 3.9]{KR} showed:
\vskip 5pt

\begin{Thm}  \label{T:KR}
(i)  For any irreducible representation $\pi$ of $\Sp(W_n)$,
\[  m(\pi) + m^{\#}(\pi) \geq 2n  +2. \]
\vskip 5pt

\noindent (ii) If $\pi$ is a supercuspidal representation of $\Sp(W_n)$, then
\[  m(\pi) + m^{\#}(\pi) = 2n  +2. \]
\end{Thm}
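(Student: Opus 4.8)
The plan is to convert the non-vanishing of the theta lifts $\theta_m(\pi)$ and $\theta^{\#}_m(\pi)$ into the internal structure of a single family of degenerate principal series, via the doubling method of Piatetski-Shapiro and Rallis in Kudla's formulation. Let $\mathbb{W} = W_n \oplus W_n^-$ (dimension $4n$, with $W_n^-$ denoting $W_n$ with the form negated), let $P = M_P N_P$ be the Siegel parabolic of $\Sp(\mathbb{W})$ with $M_P \cong \GL_{2n}$, and for a half-integer $s$ put $I_n(s) = \Ind_P^{\Sp(\mathbb{W})} |\det|^s$ (normalized induction; the inducing character is trivial since every quadratic space here has trivial discriminant). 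For an even-dimensional quadratic space $V$ with trivial discriminant, realize $\omega_{\psi,V}$ for $\Sp(\mathbb{W})$ on $S(V^{2n})$; then $\phi \mapsto (g \mapsto (\omega_{\psi,V}(g)\phi)(0))$ is a $\Sp(\mathbb{W})$-map into $I_n(s_V)$, $s_V = \tfrac12(\dim V - 2n - 1)$, with image a submodule $R_n(V) \subseteq I_n(s_V)$. Since the restriction of $\omega_{\psi,V}$ to $\Sp(W_n)\times\Sp(W_n) \subset \Sp(\mathbb{W})$ is $\omega_{\psi,V,W_n}\boxtimes\omega_{\psi,V,W_n}^{\vee}$, and since for irreducible $\pi$ the representation $\pi\boxtimes\pi^{\vee}$ of $\Sp(W_n)\times\Sp(W_n)$ is irreducible, the see-saw identity together with the uniqueness of the doubling zeta integral yields the dictionary
\[
\theta^V(\pi)\ne 0 \ \Longleftrightarrow\ \Hom_{\Sp(W_n)\times\Sp(W_n)}\big(R_n(V),\,\pi\boxtimes\pi^{\vee}\big)\ne 0,
\]
and the right-hand space is evaluated, on sections coming from $R_n(V)$, by the local doubling zeta integral $Z(s,\cdot,\pi)$ at $s=s_V$. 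Thus the whole problem becomes: for which $V$ — equivalently, which point $s_V$ and which Witt tower — does $\pi\boxtimes\pi^{\vee}$ occur as a quotient of the submodule $R_n(V)\subseteq I_n(s_V)$?

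The second ingredient is the functional equation $Z(-s,M(s)f,\pi)\doteq\gamma(s,\pi)^{-1}Z(s,f,\pi)$, where $M(s)\colon I_n(s)\to I_n(-s)$ is the standard intertwining operator (with $M(-s)M(s)=c(s)^{-1}\operatorname{id}$ for an explicit rank-one product $c(s)$) and $\gamma(s,\pi)$ is essentially the standard $\gamma$-factor of $\pi$; note that $-s_V$ is the parameter of the complementary space of dimension $4n+2-\dim V$, so $M(s)$ matches up the two ranges $\dim V \le 2n$ and $\dim V \ge 2n+2$. The structural facts one then needs are: (a) the composition series and submodule lattice of $I_n(s_0)$ at each half-integer $s_0$; (b) the identification, inside $I_n(s_m)$, of the submodules $R_n(V_m)$ and $R_n(V_m^{\#})$ attached to the two quadratic spaces of dimension $2m$ (one per tower), with in particular $R_n(V_m)+R_n(V_m^{\#})=I_n(s_m)$ for $m$ large and the dual statement on the small side; and (c) the identification of $\ker M(s_0)$ and $\operatorname{im} M(s_0)$ with appropriate $R_n(V)$'s. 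Establishing (a)--(c) uniformly in $n$ is the main obstacle; in \cite{KR} it is carried out by induction on $n$ using Kudla's filtration of the Weil representation along $N_P$ (whose successive subquotients involve the Weil representations attached to smaller quadratic spaces) together with the known reducibility points of the $I_n(s)$.

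Granting (a)--(c), part (i) follows by tracking where $Z(s,\cdot,\pi)$ can be non-vanishing simultaneously in the two towers. The tower property (Rallis, Kudla) makes $m(\pi)$ and $m^{\#}(\pi)$ well defined, and the dictionary reads $\theta_m(\pi)\ne0$ as the non-vanishing of $Z(s_m,\cdot,\pi)$ on sections from $R_n(V_m)$. Feeding this into the functional equation and fact (c): at the reflected point, non-vanishing on $R_n(V_m)$ forces a definite behavior on $\operatorname{im} M(\pm s_m)$, which by (b) is pinned down inside $I_n(-s_m)$; comparing this with the constraint coming from the other tower and with the submodule lattice (a), one finds that simultaneous occurrence at sizes $m_1,m_2$ with $m_1+m_2\le 2n+1$ cannot happen unless $\gamma(s,\pi)$ has a zero at the offending point to absorb the discrepancy. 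This gives $m(\pi)+m^{\#}(\pi)\ge 2n+2$ for every irreducible $\pi$.

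For (ii), when $\pi$ is supercuspidal there is no discrepancy available: $L(s,\pi,\mathrm{std})\equiv 1$, so $\gamma(s,\pi)$ is a unit in the relevant half-plane, and $\pi\boxtimes\pi^{\vee}$ is itself supercuspidal as a representation of $\Sp(W_n)\times\Sp(W_n)$, hence can only occur as a quotient of those constituents of $I_n(s_0)$ in (a) that are of non-residual type. Rerunning the analysis of (i) with $\gamma(s,\pi)$ a unit forces the inequality to be sharp — the first occurrences in the two towers become exactly complementary across $s\leftrightarrow -s$ (up to the unit shift that produces $2n+2$ rather than $2n+1$) — whence $m(\pi)+m^{\#}(\pi)=2n+2$. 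In all of this, the one genuinely hard step is the structural input (a)--(c); once that is in hand, both parts of the theorem are bookkeeping with the functional equation and the tower property.
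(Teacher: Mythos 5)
The paper itself does not prove this theorem: it is quoted verbatim from Kudla--Rallis \cite{KR} (Theorems 3.8 and 3.9, the local conservation relation) and used as an external input, so there is no in-paper proof to compare against. Your sketch must therefore be judged as an account of the argument in \cite{KR}.

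As such an account it has the right skeleton --- the doubled group $\Sp(\mathbb{W})$, the submodules $R_n(V)\subseteq I_n(s_V)$ coming from the Weil representation, the see-saw translation of $\theta^{V}(\pi)\neq 0$ into $\pi\boxtimes\pi^{\vee}$ occurring as a quotient of $R_n(V)$, Kudla's filtration of $\omega_\psi$ along the Siegel unipotent, and the intertwining operator $M(s)\colon I_n(s)\to I_n(-s)$ pairing the two Witt towers across $s\leftrightarrow -s$. But you explicitly defer the actual mathematical content --- your items (a)--(c): the submodule lattice of $I_n(s)$ at the relevant half-integers, the identification of the $R_n(V)$'s and of $\ker M(s)$, $\operatorname{im}M(s)$ among the constituents --- to \cite{KR} itself, calling it ``the main obstacle.'' That structural input \emph{is} the proof; once granted, both parts are, as you say, bookkeeping, and the way supercuspidality enters in (ii) is as you describe (doubling zeta integrals of a supercuspidal are Laurent polynomials, hence entire, so $\pi\boxtimes\pi^{\vee}$ is a quotient of $I_n(s)$ for every $s$, and the known structure of $R_n(V_m)+R_n(V^{\#}_{m'})$ inside $I_n(s)$ then forces complementary first occurrences). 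In short, this is a correct high-level paraphrase of the Kudla--Rallis strategy rather than a proof; nothing in it conflicts with what \cite{KR} do, but all the genuine work is outsourced to them.
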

\vskip 5pt

If we specialize this result to the case $\dim W_n = 4$ and take into account the results of the companion paper \cite{GT4}, we obtain:
 \vskip 5pt

\begin{Thm} \label{T:dichotomy}
Let $\pi$ be an irreducible representation of $\GSp_4(F)$. Then one has the following two mutually exclusive possibilities:
\vskip 5pt

\noindent (A) $\pi$ participates in the theta correspondence with $\GSO(D) =\GSO_{4,0}(F)$, where $D$ is non-split;
\vskip 5pt

\noindent (B) $\pi$ participates in the theta correspondence with $\GSO(V) = \GSO_{3,3}(F)$.
\vskip 5pt

Another way of describing this result is that one of the following two possibilities holds:
\vskip 5pt

\noindent (I) $\pi$ participates in the theta correspondence with either $\GSO(D)$ or $\GSO(V_2)=\GSO_{2,2}(F)  $ (but necessarily not both);
\vskip 5pt

\noindent (II) $\pi$ does not participate in the theta correspondence with $\GSO(D)$ or $\GSO(V_2) $, in which case it must participate in the theta correspondence with $\GSO(V)$.
 \end{Thm}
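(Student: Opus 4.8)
The plan is to transfer the question to the isometry groups, where Theorem~\ref{T:KR} is available, and then to supply the one missing ingredient --- the conservation \emph{equality} in the non-supercuspidal case --- from the companion paper \cite{GT4}. Fix an irreducible constituent $\tau$ of $\pi|_{\Sp_4(F)}$; here $\Sp_4 = \Sp(W_n)$ with $n = 2$, so the conservation bound of Theorem~\ref{T:KR} reads $m(\tau) + m^{\#}(\tau) \geq 2n+2 = 6$. The three orthogonal similitude groups in play are $\GO(D)$ with $D$ non-split (attached to the rank-$4$ space $V_2^{\#}$), $\GO(V_2)$ (the split rank-$4$ space), and $\GO(V_3) = \GO(V)$ (the split rank-$6$ space). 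Their similitude factors are all surjective (for $\GO(D)$ this uses surjectivity of the reduced norm of a local quaternion algebra), so by \S\ref{S:theta} the induced Weil representation $\Omega$ is independent of $\psi$, and Lemma~\ref{L:similitude1}(iii) gives $\Theta(\pi) \cong \bigoplus_i \Theta_{\psi}(\tau_i)$, where $\pi|_{\Sp_4(F)} = \bigoplus_i \tau_i$. Hence $\pi$ participates in the theta correspondence with one of these similitude groups if and only if some constituent $\tau_i$ participates with the corresponding isometry group; and since the $\tau_i$ form a single $\GSp_4(F)$-conjugacy class, while first occurrence in either tower is unchanged both by $\GSp_4(F)$-conjugation and by rescaling $\psi$ (rescaling the quadratic form by a scalar in $F^{\times}$ does not alter the isomorphism class of $V_m$ or of $V_m^{\#}$, again by surjectivity of the reduced norm), the choice of $i$ is immaterial. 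Writing $m(\pi)$, $m^{\#}(\pi)$ for the resulting first-occurrence indices --- with $m^{\#}(\pi) \geq 2$, since the non-split tower begins in dimension $4$ --- the claim reduces to showing: $\pi$ participates with $\GSO_{4,0}$ iff $m^{\#}(\pi) = 2$, $\pi$ participates with $\GSO_{3,3}$ iff $m(\pi) \leq 3$, and exactly one of these two conditions holds.

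The two equivalences are immediate from the definition of first occurrence together with the persistence of theta lifts up a Witt tower, so the content is the dichotomy itself. For the ``at most one'' half: if both conditions held, then $m^{\#}(\pi) = 2$ and $m(\pi) \leq 3$, whence $m(\pi) + m^{\#}(\pi) \leq 5 < 6$, contradicting Theorem~\ref{T:KR}(i); the same inequality rules out simultaneous participation with the two rank-$4$ groups $\GSO_{4,0}$ and $\GSO_{2,2}$ (which would force $m^{\#}(\pi) = 2$ and $m(\pi) \leq 2$), which is the ``but not both'' clause of formulation (I). For the ``at least one'' half, suppose $\pi$ does not participate with $\GSO_{4,0}$, i.e.\ $m^{\#}(\pi) \geq 3$; I must deduce $m(\pi) \leq 3$. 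If $\pi$ is supercuspidal, then so is $\tau$ (restriction to $\Sp_4(F)$ preserves the vanishing of all proper Jacquet modules), and Theorem~\ref{T:KR}(ii) gives the \emph{equality} $m(\pi) + m^{\#}(\pi) = 6$, so $m(\pi) = 6 - m^{\#}(\pi) \leq 3$. If $\pi$ is not supercuspidal, Kudla--Rallis yields only $m(\pi) + m^{\#}(\pi) \geq 6$, which on its own does not bound $m(\pi)$; here I would instead quote the explicit determination of the theta lifts of $\GSp_4(F)$ to $\GSO(D)$ and to $\GSO(V)$ obtained in \cite{GT4} (partially recorded in Theorem~\ref{T:summary} below), which produces a nonzero lift to $\GSO_{3,3}$ for every non-supercuspidal $\pi$ that does not lift to $\GSO_{4,0}$. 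This proves the (A)/(B) dichotomy, and the (I)/(II) reformulation follows by the same bookkeeping, sorting cases according to whether the first occurrence in one of the towers already occurs in dimension $\leq 4$ (that is, whether $m(\pi) \leq 2$ or $m^{\#}(\pi) = 2$).

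The main obstacle is precisely the non-supercuspidal case of the ``at least one'' half: the clean conservation equality of Kudla--Rallis is available only for supercuspidal representations, so for the remaining representations one genuinely needs the explicit, case-by-case analysis of the three theta correspondences from the companion paper, rather than a soft argument from conservation alone.
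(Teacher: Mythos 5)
Your proposal is essentially correct and follows the same overall strategy as the paper: Theorem \ref{T:KR}(i) for mutual exclusivity, Theorem \ref{T:KR}(ii) for the supercuspidal existence statement, and the companion paper \cite{GT4} to fill in what conservation alone does not give. The one substantive difference is that you treat \emph{every} non-supercuspidal $\pi$ by appealing to the explicit tables of \cite{GT4}, whereas the paper first invokes Lemma \ref{L:Whit} --- the computation of the twisted Jacquet module $\Omega_{U_0,\chi_0}$ of the Weil representation --- to conclude directly that any \emph{generic} $\pi$ (and in particular any non-supercuspidal essentially discrete series $\pi$, since these are generic) has a nonzero theta lift to $\GSO(V)$. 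That shortcut shrinks the class of representations for which one must appeal to the case-by-case determination of \cite{GT4} to just the non-generic non-supercuspidal ones, and is worth knowing about: it replaces a black-box citation by a soft argument for a large and structurally important subclass. Your more careful reduction to isometry groups (tracking $m(\pi)$, $m^{\#}(\pi)$ and justifying why the choice of constituent $\tau_i$ and of $\psi$ is immaterial) is more explicit than the paper, which leaves this implicit; the point about rescaling $\psi$ versus rescaling the quadratic form is correctly handled. Both proofs are valid, but you should be aware of Lemma \ref{L:Whit} as the intended mechanism for the generic case.
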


\vskip 10pt
\begin{proof}
Theorem \ref{T:KR}(i) implies that any representation $\pi$ participates in the theta correspondence with at most one of $\GSO(D)$ or $\GSO(V)$. Hence it remains to show that any $\pi$ does participate in the theta correspondence with $\GSO(D)$ or $\GSO(V)$.
If $\pi$ is supercuspidal, this is an immediate consequence of Theorem \ref{T:KR}(ii).
For generic representations, it follows by Lemma \ref{L:Whit} that $\pi$ has nonzero theta lift to
$\GSO(V)$; in particular, this implies the theorem for essentially discrete series representations which are not supercuspidal, since these are generic.
For the remaining non-generic representations, the result follows by an explicit determination of theta correspondences for $\GSp_4$ given in the companion paper \cite{GT4}, especially \cite[Thms. 8.1 and 8.3]{GT4}.
\end{proof}

\vskip 5pt

Now we come to the results of Mui\'{c}-Savin \cite{MS}. In the setting above, they considered a discrete series representation of $\Sp(W_n)$ which is generic with respect to a character $\chi$ and determine the value of $m(\pi)$. Similarly, starting with a discrete series representation $\tau$ of $\SO(V_m)$, one may  define
$n(\tau)$ analogously. Here is the result of Mui\'{c}-Savin:

\vskip 5pt

\begin{Thm}  \label{T:MS}
(i) Suppose that $\pi$ is a discrete series representation of $\Sp(W_n)$ which is generic with respect to a character $\chi$.

\begin{itemize}
\item[(a)]  If the standard $L$-factor $L(s, \pi, std)$ has a pole at $s = 0$, then $m(\pi) = n$ and all the irreducible constituents (as $\SO(V_{m(\pi)})$-modules) of $\theta_{m(\pi)}(\pi)$ are discrete series representations which are generic with respect to $\chi$.

\item[(b)] If the standard $L$-factor $L(s, \pi, std)$ does not have a pole at $s = 0$, then  $m(\pi) = n+1$ and $\theta_{m(\pi)}(\pi)$ has a unique $\chi$-generic constituent.  Moreover, this $\chi$-generic constituent is a discrete series representation.
\end{itemize}
\vskip 5pt

(ii) Suppose that $\tau$ is a discrete series representation of $\SO(V_m)$ which is generic with respect to a character $\chi$.

\begin{itemize}
\item[(a)]  If the standard $L$-factor $L(s, \tau, std)$ has  a pole at $s = 0$, then $n(\tau) = m-1$ and  all the irreducible constituents of $\theta_{n(\tau)}(\tau)$ are discrete series representations which are generic with respect to $\chi$.

\item[(b)]  If the standard $L$-factor $L(s, \tau, std)$ does not have a pole at $s = 0$ , then $n(\tau) = m$ and  $\theta_{n(\tau)}(\tau)$ has a unique $\chi$-generic $\SO(V_{m(\pi)})$-constituent. Moreover, this $\chi$-generic constituent is a discrete series representation.
\end{itemize}
\end{Thm}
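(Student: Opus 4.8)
This is a theorem of Mui\'c and Savin \cite{MS}; we indicate the architecture of a proof. Fix a discrete series representation $\pi$ of $\Sp(W_n)$ which is $\chi$-generic, and treat part (i); part (ii) is its mirror image, with the symplectic and orthogonal towers --- and ``going up'' versus ``going down'' --- interchanged.

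\emph{Locating $m(\pi)$ up to $\pm 1$.} One first computes the twisted Jacquet module of the Weil representation $\omega_m$ for the pair $\Sp(W_n)\times\OO(V_m)$ along a generic character $\chi_0$ of the unipotent radical of a Borel subgroup of $\OO(V_m)$; this is the isometry analog of Lemma \ref{L:Whit}. For $m=n+1$ the result is, as an $\Sp(W_n)$-module, the full Whittaker module $\ind_U^{\Sp(W_n)}\chi$, and for smaller $m$ it is a more degenerate variant. By Frobenius reciprocity, $\chi$-genericity of $\pi$ forces $\theta_{n+1}(\pi)\ne 0$, so $m(\pi)\le n+1$; and since a discrete series of $\Sp(W_n)$ cannot be a nonzero theta lift from a smaller orthogonal group, $m(\pi)\ge n$. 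Hence $m(\pi)\in\{n,n+1\}$, and the rest of the proof decides between the two and analyzes the lift.

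\emph{The $L$-function dichotomy.} The crux is the assertion that $m(\pi)=n$ --- the ``boundary'' lift, where $\dim V_n=\dim W_n$ --- holds if and only if $L(s,\pi,std)$ has a pole at $s=0$. This is proved by the doubling method: restricting the Weil representation for $(\Sp(W_n)\times\Sp(W_n))\times\OO(V_n)$ along the doubled symplectic group produces an intertwining map into a degenerate principal series of $\Sp(W_{2n})$ at the special point governing the first-term range of the regularized Siegel--Weil formula of Kudla--Rallis. Non-vanishing of $\theta_n(\pi)$ thereby becomes equivalent to non-vanishing of the image of the Piatetski-Shapiro--Rallis doubling zeta integral $Z(s_0,\pi\otimes\pi^\vee)$ at that point, which their computation shows is governed precisely by a pole of the doubling $L$-function; and this doubling $L$-function coincides with Shahidi's $L(s,\pi,std)$ by the comparison of the two definitions. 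This step is the main obstacle: essentially all the analytic depth of the theorem lies here, and it requires the full regularized Siegel--Weil formula (including the second-term identity), the submodule structure of the relevant degenerate principal series, and the identification of doubling and Langlands--Shahidi local factors. It is also the point at which the case $p=2$ must be treated directly, since Howe's conjecture is unavailable.

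\emph{Structure of the lift.} Finally one analyzes $\theta_{m(\pi)}(\pi)$ using the Jacquet-module computation above together with the persistence of first occurrence. If $m(\pi)=n+1$ (no pole): since the $\chi_0$-twisted Jacquet module of $\omega_{n+1}$ is the full Whittaker module, $\Theta_{n+1}(\pi)$ carries a one-dimensional space of $\chi_0$-Whittaker functionals, which is nonzero because $\pi$ is $\chi$-generic; this produces a \emph{unique} $\chi$-generic constituent $\tau$. Were $\tau$ not a discrete series, it would be the Langlands quotient of $\Ind(\delta\otimes|\det|^{s})$ with $\delta$ tempered and $s>0$, and the compatibility of theta lifting with parabolic induction and Jacquet functors would force $\pi$ to occur already in the lift of $\delta$ from a smaller orthogonal group, contradicting $m(\pi)=n+1$ (or the discreteness of $\pi$). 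If $m(\pi)=n$ (pole): at the boundary the lift is ``as tempered as its source'', so persistence of first occurrence forces every constituent of $\theta_n(\pi)$ to be a discrete series; moreover $\Theta_n(\pi)=\theta_n(\pi)$ is irreducible as an $\OO(V_n)$-module (boundary irreducibility, as in Kudla--Rallis and Mui\'c), and each irreducible $\SO(V_n)$-summand of its restriction remains $\chi$-generic, the $\SO$-Whittaker functional being inherited from the $\OO$-one. The identical three-step architecture --- Jacquet-module input, doubling/$L$-function dichotomy, first-occurrence rigidity --- gives part (ii) with the two towers exchanged.
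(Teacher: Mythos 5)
The paper does not prove Theorem \ref{T:MS}; it is quoted verbatim as the main result of Mui\'c--Savin \cite{MS}. So the relevant comparison is with the argument actually given in \cite{MS}, not with anything in this paper.

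Your three-stage outline (Whittaker-module nonvanishing to bound the first occurrence; an $L$-function dichotomy to decide between $n$ and $n+1$; structural analysis of $\theta_{m(\pi)}(\pi)$) matches the architecture of \cite{MS}, and the first and third stages are essentially right. But the mechanism you propose for the middle stage is not theirs. Mui\'c and Savin stay entirely inside the Langlands--Shahidi/Jacquet-module framework: Kudla's filtration of Jacquet modules of the Weil representation, the standard-module conjecture for the relevant classical groups (due to Mui\'c), and Shahidi's Plancherel-measure ($\mu$-function) analysis, so that the pole of $L(s,\pi,std)$ at $s=0$ enters by governing reducibility and the position of the Langlands subquotient in the generalized principal series into which the lift embeds. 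They never invoke the Rallis inner product formula, the regularized Siegel--Weil identity, or the doubling integral. Your route --- restricting the doubled Weil representation, detecting nonvanishing of $\theta_n(\pi)$ via $Z(s_0,\pi\otimes\pi^\vee)$, and then appealing to the coincidence of doubling and Shahidi local factors --- is a genuinely different and substantially later line of argument (in the spirit of Yamana and of Gan--Qiu--Takeda). It buys generality: it drops genericity and so proves more. But it rests on inputs (the second-term Siegel--Weil identity, the doubling/Shahidi $L$-factor comparison) that are much heavier than anything used in \cite{MS} and were not available when \cite{MS} was written, so you should not present this as a reconstruction of that proof.

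Two smaller cautions. Your justification of $m(\pi)\ge n$ --- ``a discrete series of $\Sp(W_n)$ cannot be a nonzero theta lift from a smaller orthogonal group'' --- is not a stand-alone standard fact and needs an actual argument; \cite{MS} do not isolate it as a separate lemma but extract the value of $m(\pi)$ from their filtration analysis. And the ``boundary irreducibility'' of $\Theta_n(\pi)$ as an $\OO(V_n)$-module that you invoke at the end is neither asserted nor needed: the statement concerns only the $\SO(V_n)$-constituents of $\theta_n(\pi)$, and for a non-supercuspidal discrete series $\pi$ irreducibility of $\Theta_n(\pi)$ would require a further argument that you have not supplied.
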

\vskip 10pt

\begin{Cor} \label{C:MS}
Let $\Sigma = \Pi \boxtimes \mu$ be a (necessarily generic)  discrete series representation of $\GSO(V)$ and suppose that
the twisted exterior square $L$-function $L(s, \Pi, \bigwedge^2  \otimes \mu^{-1})$ has a pole at $s = 0$.
Then the theta lift of $\Sigma = \Pi \boxtimes \mu$ to $\GSp_4(F)$ is a nonzero generic discrete series representation.
\end{Cor}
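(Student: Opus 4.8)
The plan is to translate the statement into a question about isometry groups, apply Theorem~\ref{T:MS}(ii), and then lift back to similitude groups using the machinery of Sections~\ref{S:theta} and~\ref{S:dual}. First I would observe, via Lemma~\ref{L:4.1}, that the hypothesis ``$L(s, \Pi, \bigwedge^2 \otimes \mu^{-1})$ has a pole at $s=0$'' is equivalent to ``$L(s, \Sigma, std)$ has a pole at $s=0$'', where $std$ is the degree-$6$ standard $L$-function of $\GSO(V) = \GSO_{3,3}$. Next, by Lemma~\ref{L:4.2}, this is in turn equivalent to the statement that $L(s, \Sigma_0, std)$ has a pole at $s=0$, where $\Sigma_0$ is any irreducible constituent of $\Sigma|_{\SO(V)}$; note that $\Sigma_0$ is again a generic discrete series representation (genericity of constituents of the restriction is standard, e.g. via the multiplicity-one result of \cite{AP} used in Lemma~\ref{L:similitude1}(ii), and being discrete series is preserved under restriction to the isometry group).

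With $\Sigma_0$ in hand, I would apply Theorem~\ref{T:MS}(ii)(a) with $m = 3$ (so $\SO(V_m) = \SO_{3,3}$): since $L(s, \Sigma_0, std)$ has a pole at $s=0$, we get $n(\Sigma_0) = m - 1 = 2$, and every irreducible constituent of $\theta_2(\Sigma_0)$ — a representation of $\Sp(W_2) = \Sp_4$ — is a generic discrete series representation. In particular $\theta_{\psi}(\Sigma_0) \neq 0$. Now I would run the similitude-lifting argument: by Lemma~\ref{L:similitude1}(i), the non-vanishing of $\theta_{\psi}(\Sigma_0)$ for a constituent $\Sigma_0$ of $\Sigma|_{\SO(V)}$ forces $\Theta(\Sigma) \neq 0$ as a representation of $\GSp(W) = \GSp_4$. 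By Theorem~\ref{T:Howe} (applied in the $\GSO(V) \to \GSp(W)$ direction, which is legitimate since that theorem is symmetric), $\theta(\Sigma)$ is then irreducible. It remains to check that $\theta(\Sigma)$ is a generic discrete series representation. Genericity follows from Lemma~\ref{L:Whit} (or rather its analogue / the computation of the twisted Jacquet module $\Omega_{U_0,\chi_0}$ in that lemma), together with the fact that $\Sigma$ is $\chi_0$-generic; being a discrete series representation can be read off from the isometry-group side: by Lemma~\ref{L:similitude1}(iii), $\theta(\Sigma)|_{\Sp(W)}$ is a sum of the $\theta_{\psi}(\Sigma_0^{(i)})$ over the constituents $\Sigma_0^{(i)}$ of $\Sigma|_{\SO(V)}$, and each of these is a direct sum of discrete series representations by Theorem~\ref{T:MS}(ii)(a); a representation of $\GSp_4$ whose restriction to $\Sp_4$ is a sum of discrete series is itself (essentially) a discrete series representation.

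The main obstacle I anticipate is the bookkeeping in the passage between isometry and similitude groups — specifically, making sure that ``discrete series'' and ``generic'' are correctly tracked through restriction and through the big-theta construction $\Omega = \ind_{R_0}^R \omega_\psi$. Theorem~\ref{T:MS} is stated for isometry groups and for the small theta lift $\theta_m$, so one must be careful that the constituents one is handed really do assemble, via Lemma~\ref{L:similitude1}(iii) and Theorem~\ref{T:Howe}, into the single irreducible representation $\theta(\Sigma)$ of $\GSp_4$, and that no spurious non-discrete-series or non-generic pieces creep in. A secondary point to verify is that the additive character $\chi$ with respect to which $\Sigma_0$ is generic, and the character $\chi_0$ appearing in Lemma~\ref{L:Whit}, can be matched up so that the genericity transfers correctly; this is routine but should be stated. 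Once these compatibilities are in place, the corollary follows by concatenating Lemma~\ref{L:4.1}, Lemma~\ref{L:4.2}, Theorem~\ref{T:MS}(ii)(a), Lemma~\ref{L:similitude1}, Theorem~\ref{T:Howe}, and Lemma~\ref{L:Whit}.
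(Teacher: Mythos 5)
Your proposal is correct and follows essentially the same route as the paper: reduce via Lemmas~\ref{L:4.1} and~\ref{L:4.2} to the isometry-group standard $L$-function, apply Theorem~\ref{T:MS}(ii)(a), and pass back to similitudes using Lemma~\ref{L:similitude1} and Theorem~\ref{T:Howe}. The one small divergence is that you invoke Lemma~\ref{L:Whit} to get genericity of $\theta(\Sigma)$, whereas the paper reads genericity directly off Theorem~\ref{T:MS}(ii)(a) (the constituents of $\theta_{n(\Sigma_0)}(\Sigma_0)$ are already asserted there to be $\chi$-generic), so your extra appeal to the Jacquet-module computation is sound but not needed.
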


\begin{proof}
Let $\Sigma_0$ be an irreducible constituent of the restriction of $\Sigma = \Pi \boxtimes \mu$ to $\SO(V)$, so that $\Sigma_0$ is $\chi$-generic with respect to some $\chi$.
By Lemmas \ref{L:4.1} and \ref{L:4.2},
the standard $L$-function of $\Sigma_0$ is
\[  L(s, \Sigma_0, std) = L(s, \Pi, \bigwedge^2 \otimes \mu^{-1}) \]
and thus has a pole at $s = 0$. By Thm. \ref{T:MS}(ii)(a), the theta lift $\theta(\Sigma)$ of $\Sigma$ to $\GSp_4(F)$ is nonzero and all its constituents are $\chi$-generic discrete series representations. This shows that
$\theta(\Pi \boxtimes \mu)$ is a nonzero generic discrete series representation, which is irreducible by Thm. \ref{T:Howe}.
\end{proof}
\vskip 10pt

Finally, in our application of Cor. \ref{C:MS} later on, we need the following crucial result of Henniart \cite{He2}:
\vskip 5pt

\begin{Thm} \label{T:henniart}
The local Langlands correspondence for $\GL_{n}$ respects the twisted exterior square $L$-function.
In other words, if $\Pi$ is an irreducible representation of $\GL_{n}(F)$ with $L$-parameter $\phi_{\Pi}$ and $\mu$ is a 1-dimensional character of $F^{\times}$, then
\[  L(s, \Pi, {\bigwedge}^2 \otimes \mu^{-1}) = L(s, {\bigwedge}^2 \phi_{\Pi} \otimes \mu^{-1}), \]
where the $L$-function on the LHS is that of Shahidi.
\end{Thm}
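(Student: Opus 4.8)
The plan is to reduce the assertion to an identity of local $\gamma$-factors and then recover the $L$- and $\epsilon$-identities from it. First I would use multiplicativity: both Shahidi's $\gamma$-factor $\gamma(s,\Pi,\bigwedge^2\otimes\mu^{-1},\psi)$ and the Artin $\gamma$-factor $\gamma(s,\bigwedge^2\phi_{\Pi}\otimes\mu^{-1},\psi)$ are multiplicative along parabolic induction, and the identity $\bigwedge^2(\phi_1\oplus\phi_2)=\bigwedge^2\phi_1\oplus\bigwedge^2\phi_2\oplus(\phi_1\otimes\phi_2)$, together with the already-known compatibility of the LLC for $\GL_n$ with Rankin--Selberg $\gamma$-factors (Harris--Taylor, Henniart), reduces the case of a general $\Pi$ via the Langlands classification to the case of an essentially square-integrable $\Pi$. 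Such a $\Pi$ has the form $St(\rho,m)$ for a supercuspidal $\rho$ of some $\GL_d(F)$, with parameter $\phi_\rho\boxtimes[m]$; since $\bigwedge^2(\phi_\rho\boxtimes[m])=(\bigwedge^2\phi_\rho\otimes Sym^2[m])\oplus(Sym^2\phi_\rho\otimes\bigwedge^2[m])$ and $Sym^2[m]$, $\bigwedge^2[m]$ are sums of irreducibles of $\SL_2(\C)$, a further application of multiplicativity reduces everything to the exterior \emph{and} symmetric square $\gamma$-factors of supercuspidal representations. One therefore proves the two cases simultaneously.

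For a supercuspidal $\rho$ of $\GL_d(F)$, the main tool is a global argument. One globalizes $\rho$ to a cuspidal automorphic representation of $\GL_d$ over a number field $k$ with $k_{v_0}=F$, arranging that at all finite places $v\ne v_0$ the local component $\rho_v$ is unramified or of a type (for instance a suitably ramified twist of an essentially square-integrable representation, or a principal series) for which the local identity is already known. One then compares two global functional equations: that of the Langlands--Shahidi completed exterior (resp.\ symmetric) square $L$-function, obtained by viewing $\GL_d\times\GL_1$ as a Siegel-type Levi subgroup of an even orthogonal (resp.\ symplectic) similitude or spin group so as to accommodate the twist by $\mu$, as in row~(f) of Table~\ref{table}; and that of the corresponding Artin $L$-function of $\bigwedge^2$ (resp.\ $Sym^2$) of the associated global Galois parameter. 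Both functional equations have the shape $L(s)=\epsilon(s)\,L(1-s)^{\vee}$ with $\epsilon(s)=\prod_v\epsilon_v(s,\cdot,\psi_v)$; dividing them and using that all local factors at $v\ne v_0$ already agree yields the desired equality of $\gamma$-factors at $v_0$.

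To upgrade the $\gamma$-identity to the separate $L$- and $\epsilon$-identities, I would use stability under highly ramified twists. For a sufficiently ramified character $\chi$ of $F^{\times}$, the representation $\bigwedge^2\phi_\rho\otimes\chi\mu^{-1}$ of $W_F$ has no unramified subquotient, so $L(s,\bigwedge^2\phi_\rho\otimes\chi\mu^{-1})=1$; the matching vanishing on the Shahidi side, together with the known stability of the Langlands--Shahidi $\gamma$-factors under such twists, forces the two $\epsilon$-factors to agree after the twist by $\chi$. Feeding this back into the $\gamma$-identity recovers the $L$-identity for the original twist, once one also observes that the poles of the exterior (resp.\ symmetric) square $L$-function of a supercuspidal $\GL_d$-representation, on both the automorphic and the Galois side, occur only in a restricted set of self-duality configurations that correspond to one another under the LLC. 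The $\epsilon$-identity then follows formally from the $\gamma$- and $L$-identities.

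The hard part is the stability of the Langlands--Shahidi exterior and symmetric square $\gamma$-factors under highly ramified twists: this is a genuinely harmonic-analytic input, requiring an analysis of the asymptotics of the relevant local coefficients as the twisting character becomes very ramified, and it is exactly what makes the passage from the $\gamma$-identity --- which by itself does not separate $L$ from $\epsilon$ --- to the full statement possible. A secondary obstacle is the globalization step: one must be able to prescribe a supercuspidal local component at $v_0$ while keeping every other ramified component of a type already under control, which calls for a globalization with prescribed local conditions of the kind furnished by a simple form of the trace formula or by Poincar\'e series.
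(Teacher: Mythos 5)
The paper does not actually prove this statement; it is cited as a result of Henniart \cite{He2}, and the paper uses it as a black box. So your task is really to reconstruct Henniart's argument, and while your overall skeleton (multiplicativity reduction to the essentially-square-integrable and then supercuspidal case, followed by a global argument, followed by stability under highly ramified twists) is the right shape, there is a critical gap in the middle.

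In your global step you propose to compare the Langlands--Shahidi global functional equation for $L(s,\Pi,\bigwedge^2\otimes\mu^{-1})$ with ``that of the corresponding Artin $L$-function of $\bigwedge^2$ of the associated global Galois parameter.'' But there is no such object: a cuspidal automorphic representation of $\GL_d$ over a number field has no associated global Galois representation in general, and hence no global Artin functional equation on the other side of the comparison. (This is precisely the obstruction that makes Henniart's theorem nontrivial; if one had a global Galois parameter with a known functional equation, the whole problem would collapse by the elementary argument you describe.) Henniart circumvents this by never comparing with a global Galois side. Roughly, he exploits the decomposition $\gamma(s,\pi\times\pi,\psi)=\gamma(s,\pi,\bigwedge^2,\psi)\,\gamma(s,\pi,Sym^2,\psi)$ together with the known compatibility of the LLC with Rankin--Selberg $\gamma$-factors, and then, to separate the two factors, determines the ``type'' (symplectic vs.\ orthogonal) of a self-dual supercuspidal $\pi$ by a global argument about poles of the relevant automorphic $L$-functions and functorial lifts to classical groups --- not by matching functional equations against a Galois-theoretic $L$-function. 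Secondly, you overclaim: even Henniart's theorem does not yield exact equality of the $\gamma$- and $\epsilon$-factors; it gives the $L$-factor identity exactly and the $\epsilon$-factor identity only up to a root of unity (a fact the present paper itself uses and acknowledges in \S\ref{S:Plan}). So your plan to establish the exact $\gamma$-identity first and then split off $L$ and $\epsilon$ is not achievable; you must argue for the $L$-factor identity more directly, e.g.\ via the pole criterion for $L(s,\pi,\bigwedge^2\otimes\mu^{-1})$ for supercuspidal $\pi$ matched against the occurrence of the symplectic similitude type for $\phi_\pi$.
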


\vskip 10pt

The dichotomy given in Theorem \ref{T:dichotomy} is qualitative in nature, but can be made more concrete by the explicit determination of the three theta correspondences in question. This detailed study of theta correspondences is given in the companion paper \cite{GT4}.  We conclude this section by assembling the above results together with those of \cite{GT4}.
\vskip 5pt

\begin{Thm}  \label{T:summary}
(i) The set of irreducible representations of $\GSp_4(F)$ which are of Type (A) is precisely
 \[ \Pi(\GSp_4)_{ng}^{temp}: =
 \{ \text{non-generic essentially tempered representations of $\GSp_4(F)$} \}. \]
More precisely,  under the theta correspondence for $\GSO(D) \times \GSp_4$,
the map
\[  \tau^D_1 \boxtimes \tau^D_2  \mapsto \theta(\tau^D_1 \boxtimes \tau_2^D) \]
 defines a bijection
 \[  \text{$\Pi(\GSO(D))$ modulo action of $\GO(D)$} \longleftrightarrow \Pi(\GSp_4)_{ng}^{temp}. \]
Moreover, the image of the subset of $\tau^D_1 \boxtimes \tau^D_2$'s,  with $\tau^D_1 \ne \tau^D_2$, is precisely the subset of non-generic supercuspidal representations of $\GSp_4(F)$. The other representations in the image are the non-discrete series representations in \cite[Table 1, NDS(c)]{GT4}.
\vskip 5pt

\noindent (ii) The theta correspondence for $\GSO(V_2) \times \GSp_4$ defines an injection
\[  \text{$\Pi(\GSO(V_2))$ modulo action of $\GO(V_2)$} \longrightarrow \Pi(\GSp_4). \]
The image is disjoint from $\Pi(\GSp_4)_{ng}^{temp}$ and consists of:
\begin{itemize}
 \item[(a)]  the generic discrete series representations (including supercuspidal ones) such that $L(s, \pi, std)$ has a pole at $s = 0$.

\item[(b)] the non-discrete series representations in \cite[Table 1, NDS(b, d,e)]{GT4}.
\end{itemize}
 Moreover, the images of the representations $\tau_1 \boxtimes \tau_2$'s, with $\tau_1 \ne \tau_2$ discrete series representations of $\GL_2(F)$, are precisely the representations in (a).
\vskip 5pt

\noindent (iii) The theta correspondence for $\GSp_4 \times \GSO(V)$ defines an injection
\[  \Pi(\GSp_4) \smallsetminus \Pi(\GSp_4)_{ng}^{temp} \longrightarrow \Pi(\GSO(V)) \subset \Pi(\GL_4) \times \Pi(\GL_1). \]
Moreover, the representations  of $\GSp_4(F)$ which are of Type (II), i.e. those not accounted for by (i) and (ii) above, are
\begin{itemize}
\item[(a)]  the generic discrete series representations $\pi$ whose standard factor $L(s,\pi, std)$ is holomorphic at $s=0$.   The images of these representations  under the above map are precisely the discrete series representations $\Pi \boxtimes \mu$ of
$\GL_4(F) \times  \GL_1(F)$ such that $L(s, {\bigwedge}^2 \phi_{\Pi} \otimes \mu^{-1})$ has a pole at $s = 0$.

\vskip 5pt
\item[(b)] the non-discrete series representations in \cite[Table 1, NDS(a)]{GT4}. The images of these under the above map consists of non-discrete series representations  $\Pi \boxtimes \mu$ such that
\[ \phi_{\Pi} = \rho \oplus \rho \cdot \chi \quad \text{and} \quad \mu = \det \rho \cdot \chi,  \]
for  an irreducible two dimensional $\rho$ and a character $\chi \ne 1$.
\end{itemize}
\vskip 5pt

\noindent (iv) If a representation $\pi$ of $\GSp_4(F)$ with central character $\mu$ participates in the theta correspondence with  $\GSO(V_2)  $, so that
\[  \pi = \theta(\tau_1 \boxtimes \tau_2) = \theta(\tau_2 \boxtimes \tau_1),\]
then $\pi$ has a nonzero theta lift to $\GSO(V)$.
If $\Pi \boxtimes \mu$ is the small theta lift of $\pi$ to $\GSO(V)$, with $\Pi$ a representation of
$\GL_4(F)$, then
\[  \phi_{\Pi} = \phi_{\tau_1} \oplus \phi_{\tau_2} \quad \text{and} \quad  \mu = \det \phi_{\tau_1} = \det \phi_{\tau_2}. \]
  \end{Thm}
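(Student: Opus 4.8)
The plan is to derive all four parts by assembling three ingredients: the qualitative dichotomy of Theorem \ref{T:dichotomy}; the quantitative first-occurrence results of Kudla--Rallis (Theorem \ref{T:KR}) and Mui\'{c}--Savin (Theorem \ref{T:MS} and Corollary \ref{C:MS}) together with Henniart's identity (Theorem \ref{T:henniart}); and the explicit determination of the three theta correspondences in the companion paper \cite{GT4}. The organizing principle is to stratify $\Pi(\GSp_4)$ by the Langlands classification: supercuspidal representations; essentially square-integrable non-supercuspidal representations (twists of the Steinberg-type representations, all of which are generic); and the remaining representations, which are parabolically induced from proper Levi subgroups. For this last stratum there is essentially nothing to prove beyond quoting \cite[Thms.~8.1 and 8.3]{GT4}, since the content of parts (ii)(b), (iii)(b) and the ``NDS'' clauses of (i) is exactly \cite[Table~1]{GT4}. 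The real work lies in the discrete series strata, which \cite{GT4} does not treat directly, and there the general theorems above take over.

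For part (i), I would first settle the supercuspidal case. By Theorem \ref{T:dichotomy} a supercuspidal $\pi$ is of exactly one of Types (A) and (B); by Lemma \ref{L:Whit} every generic representation is of Type (B); and I would then use the Kudla--Rallis conservation relation, packaged in Theorem \ref{T:dichotomy} for $\dim W = 4$, to show that a \emph{non-generic} supercuspidal has vanishing theta lift to $\GSO(V) = \GSO_{3,3}$ and hence nonzero lift to $\GSO(D) = \GSO_{4,0}$, i.e.\ is of Type (A). With $\pi$ of Type (A), Proposition \ref{P:super} supplies irreducibility of $\Theta(\pi)$ on $\GSO(D)$ and injectivity of the lift in both directions; running the correspondence from $\GSO(D)$, a representation $\tau_1^D \boxtimes \tau_2^D$ has \emph{supercuspidal} theta lift exactly when $\tau_1^D \ne \tau_2^D$ (the case $\tau_1^D = \tau_2^D$ producing the non-supercuspidal, non-discrete-series lifts listed as NDS(c) in \cite{GT4}), and no such lift can be generic, since it would then be of Type (B). The essentially square-integrable non-supercuspidal representations are generic, so of Type (B), and hence absent from the image; this yields the asserted bijection and the equality of Type (A) with the set of non-generic essentially tempered representations.

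For parts (ii) and (iii), injectivity of each correspondence is \cite[Lemma~4.2]{Ro1} together with the multiplicity-one restriction theorem of \cite{AP} (or, on supercuspidals, Proposition \ref{P:super}), and I would describe each image stratum by stratum. A generic discrete series $\pi$ of $\GSp_4$ is of Type (B) by Lemma \ref{L:Whit}; applying Theorem \ref{T:MS}(i) (with $n = 2$) to a constituent of $\pi|_{\Sp_4}$ --- exactly as in the proof of Corollary \ref{C:MS} --- shows that $m(\pi) = 2$, so that $\pi$ lifts to $\GSO(V_2) = \GSO_{2,2}$ and is of Type (I), precisely when $L(s,\pi,\mathrm{std})$ has a pole at $s = 0$, and otherwise $m(\pi) = 3$, putting $\pi$ into Type (II); in the latter case Corollary \ref{C:MS} (read in reverse) together with Henniart's Theorem \ref{T:henniart} identifies $\theta(\pi) = \Pi \boxtimes \mu$ as a generic discrete series of $\GSO(V)$ with $L(s, {\bigwedge}^2\phi_\Pi \otimes \mu^{-1})$ having a pole at $s = 0$, the lift being irreducible by Theorem \ref{T:Howe}. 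That the preimages in $\GSO(V_2)$ of the Type (I) generic discrete series are exactly the $\tau_1 \boxtimes \tau_2$ with $\tau_1 \ne \tau_2$ discrete series of $\GL_2$ --- while $\tau_1 = \tau_2$ yields the non-generic, non-tempered lifts --- is then forced by injectivity together with \cite[Thm.~8.3]{GT4}, and the remaining (non-discrete-series) entries of (ii) and (iii), including the shape $\phi_\Pi = \rho \oplus \rho\chi$, $\mu = \det\rho\cdot\chi$ in (iii)(b), are the content of \cite[Thms.~8.1 and 8.3]{GT4}.

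Finally, for part (iv): if $\pi = \theta(\tau_1\boxtimes\tau_2)$ participates in the correspondence with $\GSO(V_2)$, then $m(\pi) \le 2$, so by the tower property $\pi$ has nonzero lift to $\GSO(V) = \GSO_{3,3}$; and the identity $\phi_\Pi = \phi_{\tau_1}\oplus\phi_{\tau_2}$, $\mu = \det\phi_{\tau_1} = \det\phi_{\tau_2}$ for the resulting $\Pi\boxtimes\mu$ I would first check on unramified representations, using Proposition \ref{P:unram} and its analogue for the $\GSO(V_2)$-correspondence --- which realizes the composite $\GSO(V_2)\to\GSp_4\to\GSO(V)$ on parameters as $(\phi_1,\phi_2)\mapsto \phi_1\oplus\phi_2$ --- and then extend it to all representations by quoting the explicit determination in \cite[Thm.~8.3]{GT4}. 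The main obstacle is the supercuspidal case of part (i): for supercuspidal representations no Jacquet-module computation is available, so both the exhaustiveness of the Type (A)/(B) dichotomy and the assignment of each non-generic supercuspidal to Type (A) must be extracted entirely from the Kudla--Rallis conservation relation of Theorem \ref{T:KR}, and it is exactly this input that makes the bijection in (i) surjective onto the full set of non-generic essentially tempered representations.
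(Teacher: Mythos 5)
The paper itself offers no proof of this theorem beyond the single line ``The proof of Theorem \ref{T:summary} is given in [GT4],'' so there is no in-text argument to compare against; what follows assesses your reconstruction on its own terms.

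Your overall stratification (supercuspidal via Kudla--Rallis and Mui\'c--Savin, non-supercuspidal discrete series via Mui\'c--Savin plus Henniart, non-discrete series via the Jacquet-module tables in \cite{GT4}) is the right organization and matches what the companion paper does. Parts (ii)--(iv) are substantively correct, though your appeal to ``Corollary \ref{C:MS} read in reverse'' is really an application of Theorem \ref{T:MS}(i)(b) followed by Theorem \ref{T:MS}(ii)(a) to the resulting lift, together with Lemmas \ref{L:4.1} and \ref{L:4.2} and Theorem \ref{T:henniart} to convert the Shahidi standard $L$-factor of $\GSO(V)$ into the Artin twisted exterior square $L$-factor.

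The genuine gap is the step you yourself flag as the ``main obstacle'': showing that a non-generic supercuspidal $\pi$ is of Type (A). You assert that the Kudla--Rallis conservation relation, as packaged in Theorem \ref{T:dichotomy}, shows that such a $\pi$ has vanishing theta lift to $\GSO_{3,3}$. It does not. Conservation gives $m(\pi) + m^{\#}(\pi) = 6$, which combined with $\dim W = 4$ yields the \emph{mutually exclusive and exhaustive} dichotomy between Types (A) and (B), but it is blind to genericity and cannot by itself tell you which way a given $\pi$ falls. The missing input is the converse of the observation you do use: by Proposition \ref{P:super}, if a supercuspidal $\pi$ lifted to the split tower its first occurrence $\theta_{m(\pi)}(\pi)$ would be an irreducible supercuspidal of $\GSO_{2,2}$ or $\GSO_{3,3}$, hence automatically generic (every supercuspidal of $\GL_2 \times \GL_2$ or of $\GL_4 \times \GL_1$ is generic); and then the twisted Jacquet-module identity $\Omega_{U_0,\chi_0} \cong \mathrm{ind}_U^{\GSp(W)}\chi$ of Lemma \ref{L:Whit}, read in the $\GSO \to \GSp$ direction, forces $\pi$ to be generic as well, a contradiction. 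You invoke Lemma \ref{L:Whit} only in the one direction ``generic on $\GSp_4$ implies Type (B)'' and never state or use this converse chain, so the claimed equality of Type (A) with $\Pi(\GSp_4)_{ng}^{temp}$ --- and hence the surjectivity of the bijection in part (i) --- is not actually established by the argument as written.
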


\vskip 5pt
\noindent The proof of Theorem \ref{T:summary} is given in [GT4], where more complete and explicit information can be found.

\vskip 10pt
\section{\bf Langlands Parameters}  \label{S:parameters}

In this section, we record some facts about the Langlands parameters for $\GSp_4$.
Given such a parameter
\[  \phi: WD_F \longrightarrow \GSp_4(\C), \]
we may consider its composition with the similitude factor $\simi$ to obtain a  1-dimensional character $\simi(\phi)$ of $WD_F$. We call $\simi(\phi)$ the similitude character of $\phi$.
\vskip 5pt

Now consider the composite of $\phi$ with the inclusion
\[  \iota: \GSp_4(\C) \hookrightarrow \GL_4(\C) \times \GL_1(\C) \]
to obtain an $L$-parameter for $\GL_4 \times \GL_1$. We note the following crucial lemma (cf. also [V,  \S 7]), which is the analog of Thm. \ref{T:summary}(iii) for $L$-parameters.
\vskip 5pt

\begin{Lem}  \label{L:par}
The map $\phi \mapsto \iota \circ \phi$ gives an injection
\[  \Phi(\GSp_4) \hookrightarrow \Phi(\GL_4) \times \Phi(\GL_1). \]
 The discrete series $L$-parameters $\phi \times \mu$ of $\GL_4 \times \GL_1$ which are in the image of the map are precisely those such that $L(s, \bigwedge^2 \phi \otimes \mu^{-1})$ has a pole at $s = 0$.
\end{Lem}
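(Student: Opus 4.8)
The plan is to analyze the map $\phi \mapsto \iota\circ\phi$ concretely in terms of the embedding $\GSp_4(\C)\hookrightarrow \GL_4(\C)\times\GL_1(\C)$, and to characterize the image by means of the exterior square. First I would recall that $\GSp_4(\C)$ can be described as the subgroup of $g\in\GL_4(\C)$ preserving a symplectic form $J$ up to the scalar $\simi(g)$, and that under $\iota$ the parameter $\phi$ goes to $\bigl(\phi_0,\simi(\phi)\bigr)$ where $\phi_0\colon WD_F\to\GL_4(\C)$ is the Spin (standard $4$-dimensional) representation composed with $\phi$. The injectivity is the easy half: if $\iota\circ\phi_1$ and $\iota\circ\phi_2$ are $\GL_4(\C)\times\GL_1(\C)$-conjugate, then in particular $\phi_1$ and $\phi_2$ are conjugate in $\GL_4(\C)$, and since $\GSp_4(\C)$ is its own normalizer-modulo-center situation is controlled by the symplectic form, one checks that a $\GL_4$-conjugacy between two maps landing in $\GSp_4(\C)$ with the same similitude character can be adjusted to a $\GSp_4(\C)$-conjugacy; this is a standard argument using that the space of $\phi$-invariant bilinear forms of a given similitude type is at most one-dimensional when $\phi_0$ is, say, multiplicity-free, and in general reduces to the block structure of $\phi_0$.

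The substantive part is identifying the image. The key representation-theoretic input is the decomposition of $\bigwedge^2$ of the standard representation of $\GL_4(\C)$ restricted to $\GSp_4(\C)$: one has $\bigwedge^2(\mathrm{std}_4)\cong \mathrm{std}_5 \oplus (\simi)$ as $\GSp_4(\C)$-representations, where $\mathrm{std}_5$ is the $5$-dimensional orthogonal representation (the map to $\SO_5(\C)$) and $(\simi)$ is the $1$-dimensional similitude character; this is exactly the decomposition reflected in the commutative diagram relating $std\colon\GSp_4(\C)\to\SO_5(\C)$ and the $\wedge^2$ map in Section \ref{S:certain}. Consequently, for $\phi\in\Phi(\GSp_4)$ with image under $\iota$ equal to $\phi_0\boxtimes\mu$ (so $\mu=\simi(\phi)$), we get
\[
{\bigwedge}^2\phi_0 \otimes \mu^{-1} \;\cong\; (\mathrm{std}_5\circ\phi) \;\oplus\; \mathbf 1 ,
\]
so that $L(s,\bigwedge^2\phi_0\otimes\mu^{-1}) = L(s,\mathrm{std}_5\circ\phi)\cdot\zeta_F(s)$ has a pole at $s=0$ coming from the trivial summand. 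This shows every parameter in the image has the stated pole. For the converse, suppose $\phi_0\boxtimes\mu$ is a discrete series $L$-parameter of $\GL_4\times\GL_1$ with $L(s,\bigwedge^2\phi_0\otimes\mu^{-1})$ having a pole at $s=0$; the pole forces $\bigwedge^2\phi_0\otimes\mu^{-1}$ to contain the trivial representation, i.e. there is a nonzero $WD_F$-invariant vector in $\bigwedge^2\phi_0\otimes\mu^{-1}$, which is the same as a nonzero alternating bilinear form $B$ on the $4$-dimensional space with $B(\phi_0(w)x,\phi_0(w)y)=\mu(w)B(x,y)$. I would then argue that because $\phi_0\boxtimes\mu$ is a \emph{discrete series} parameter — in particular $\phi_0$ has no summand appearing with its own twist by $\mu$ in a way that would produce extra invariants — this form $B$ is nondegenerate, so $\phi_0$ factors through $\GSp_4(\C)$ with similitude $\mu$, exhibiting $\phi_0\boxtimes\mu$ as $\iota\circ\phi$ for the resulting $\phi\in\Phi(\GSp_4)$.

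The main obstacle is the nondegeneracy claim in the last step: a priori the pole only guarantees \emph{some} invariant alternating form, which could be degenerate if $\phi_0$ is reducible. Here I would use the discrete-series hypothesis decisively. Writing $\phi_0=\bigoplus_i m_i\rho_i$ with $\rho_i$ irreducible, the dimension of the space of invariant alternating forms of similitude type $\mu$ is computed from the multiplicities and from which $\rho_i$ satisfy $\rho_i^\vee\cong\rho_i\otimes\mu^{-1}$ (self-dual-up-to-$\mu$, of symplectic type) versus pairings $\rho_i^\vee\cong\rho_j^\vee\otimes\mu^{-1}$ for $i\neq j$; one shows that for a \emph{degenerate} such $B$ to exist while $\phi_0\boxtimes\mu$ remains a discrete series parameter is impossible, essentially because discrete series forces the relevant multiplicities to be $1$ and forces the block structure $\rho\oplus(\rho\otimes\mu^{-1})^\vee$ that would give a degenerate form to actually be nondegenerate on the nose, or else pushes $\phi_0$ into a proper Levi (contradicting discreteness) — this is the content matching Thm. \ref{T:summary}(iii). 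Once nondegeneracy is in hand, the rest is bookkeeping with $L$-factors using the displayed decomposition of $\bigwedge^2$.
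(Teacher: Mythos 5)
Your proposal follows the same overall strategy as the paper: reduce to understanding the space of invariant alternating forms of similitude type $\mu$, then characterize the image via a nondegenerate such form. The easy direction is actually done more explicitly than in the paper, via the decomposition $\bigwedge^2 (\text{std}_4)|_{\GSp_4(\C)} \cong \text{std}_5 \oplus \simi$, giving $L(s,\bigwedge^2\phi_0\otimes\mu^{-1}) = L(s,\text{std}_5\circ\phi)\cdot\zeta_F(s)$; the paper leaves this implicit.

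However, two pieces of your argument are substantially weaker than what is needed. For the injectivity, you wave at ``the space of $\phi$-invariant bilinear forms of a given similitude type is at most one-dimensional when $\phi_0$ is multiplicity-free, and in general reduces to the block structure of $\phi_0$.'' But the cases with multiplicities or with isotropic paired blocks $W_j \oplus W_j^\vee\cdot\mu$ are exactly where the space of invariant forms is more than one-dimensional, and those are the cases where something must actually be proved. The paper proves the injectivity in full generality (for all of $\GSp_{2n}$) by decomposing $V = \bigoplus M_i\otimes V_i$ into isotypic pieces, separating the case $V_i^\vee\otimes\mu\cong V_i$ (where the invariant form is controlled by a sign $\epsilon_i$ and reduces to a nondegenerate bilinear form of sign $-\epsilon_i$ on the multiplicity space $M_i$) from the case $V_i^\vee\otimes\mu \cong V_j$ with $i\neq j$ (where the two pieces are forced isotropic and the form reduces to a nondegenerate pairing on $M_j$), and then using that $\GL(M_i)$, resp.\ $\GL(M_j)\times\GL(M_j)$, acts transitively on the relevant forms and centralizes $\phi$. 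Your sketch does not carry out this reduction.

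The second issue is in your converse direction. You set up a case analysis on the decomposition $\phi_0 = \bigoplus_i m_i \rho_i$ and worry about block structures like $\rho\oplus(\rho\otimes\mu^{-1})^\vee$ possibly giving a degenerate invariant form ``while remaining a discrete series parameter.'' This never happens: a discrete series $L$-parameter of $\GL_4$ is, by definition, one that does not factor through any proper Levi of $\GL_4(\C)$, which is equivalent to $\phi_0$ being irreducible as a $4$-dimensional representation of $WD_F$. Any nontrivial decomposition $\phi_0=\bigoplus m_i\rho_i$ is already excluded. Once one observes this, the nondegeneracy is immediate, which is exactly what the paper does: the pole forces a nonzero $WD_F$-invariant alternating form of similitude $\mu$, and its radical is a $WD_F$-stable subspace, hence $0$ or everything by irreducibility, and it cannot be everything. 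Your elaborate alternative route is not wrong in outcome, but it addresses a situation that the discrete-series hypothesis rules out before the argument begins.
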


\begin{proof}
We shall prove the lemma in the context of  $\GSp_{2n}$. For any $n \geq 1$, we shall show that the natural map
\[  \Phi(\GSp_{2n}) \longrightarrow \Phi(\GL_{2n}) \times \Phi(\GL_1) \]
is injective. This injectivity is equivalent to the following assertion: if $\phi: WD_F \longrightarrow \GL(V)$ is an $L$-parameter, with $V$ a $2n$-dimensional vector space over $\C$,
and $B$ and $B'$ are two nondegenerate symplectic forms on $V$ which are preserved by $\phi$ up to a similitude character $\mu$, then $B$ and $B'$ are conjugate by an element of $\GL(V)$ which centralizes $\phi$.
\vskip 5pt

We now  prove the above statement. Since $(\phi,V)$ is semisimple, we may write
\[  V = \bigoplus_i M_i \otimes V_i \]
where $V_i$ is irreducible and $M_i$ is its multiplicity space.  Since $V^{\vee}  \otimes \mu \cong V $, we see that, for each $i$, either $V_i^{\vee}\otimes \mu  \cong V_i $, or else $V_i^{\vee}\otimes \mu  = V_j $ for some $j \ne i$. Corresponding to these two possibilities,  we may decompose $V$ as:
\[  V = \left( \bigoplus_i M_i \otimes V_i \right) \oplus  \left( \bigoplus_j M_j \otimes (W_j \oplus W_j^{\vee} \cdot \mu) \right). \]
Since the nondegenerate forms $B$ and $B'$ remain nondegenerate on each summand above, we are reduced to showing the statement on each summand.
\vskip 5pt

First examine $M_i \otimes V_i$. Since $V_i$ is irreducible, it follows by Schur's lemma that there is a unique (up to scaling) bilinear form on $V_i$ which is $WD_F$-equivariant with similitude character $\mu$. Any such nonzero form is necessarily nondegenerate and has a sign $\epsilon_i$. Thus, giving a nondegenerate $(WD_F ,\mu)$-equivariant symplectic form on $M_i \otimes V_i$ is equivalent to giving a nondegenerate bilinear form on $M_i$ of sign $-\epsilon_i$.  But any two such forms on $M_i$ are conjugate under $\GL(M_i)$, which commutes with the action of $WD_F$. This proves the statement for the summand $M_i \otimes V_i$.
 \vskip 5pt

Now examine the summand $W = M_j \otimes (W_j \oplus W_j^{\vee} \cdot \mu)$. In this case, the subspaces $M_j \otimes W_j$ and $M_j \otimes W_j^{\vee}\cdot \mu$ are necessarily totally isotropic (with respect to any $(WD_F, \mu)$-equivariant symplectic form). Moreover, there is a unique $(WD_F, \mu)$-equivariant pairing on $W_j \times W_j^{\vee} \cdot \mu$, up to scaling. Hence, giving
 a nondegenerate $(WD_F,\mu)$-equivariant symplectic form on  $W$ is equivalent to giving a nondegenerate bilinear form on $M_j$. But any two such are conjugate under the natural action of $GL(M_j) \times GL(M_j)$, which commutes with the action of $WD_F$. This proves the statement for the summand $W$.

 \vskip 5pt
 Finally,  suppose that $\phi \times \mu \in \Phi(\GL_{2n}) \times \Phi(\GL_1)$ is such that
$\phi$ is irreducible as a $2n$-dimensional representation. Then it is not difficult to see that
$L(s,\bigwedge^2 \phi \otimes \mu^{-1})$  has a pole at $s = 0$ if and only if $\left( \bigwedge^2 \phi \right) \otimes \mu^{-1}$ contains the  trivial representation as a summand. In other words, the action of $WD_F$ via $\phi$ preserves a non-zero symplectic form up to scaling by the character $\mu$. This symplectic form is necessarily nondegenerate, so that
$\phi$ factors through $\GSp_{2n}(\C)$ after conjugation, for otherwise its kernel would be a non-trivial submodule.

\end{proof}
\vskip 5pt

The theory of endoscopy shows that $\GSp_4$ has a unique endoscopic group which is isomorphic to $\GSO_{2,2}$. The dual group of $\GSO_{2,2}$ is
\[  \GSpin_4(\C) \cong (\GL_2(\C) \times \GL_2(\C))^0 = \{ (g_1,g_2): \det g_1 = \det g_2\}, \]
so that there is a distinguished conjugacy class of embeddings of dual groups
\[  (\GL_2(\C) \times \GL_2(\C))^0 \hookrightarrow \GSp_4(\C). \]
This gives rise to a natural map
\[  \Phi(\GSO_{2,2}) \longrightarrow \Phi(\GSp_4). \]
We say that an $L$-parameter $\phi  \in \Phi(\GSp_4)$ is  {\bf endoscopic} if it is in the image of this map. More concretely, $\phi$ is endoscopic if
\[  \phi = \phi_1 \oplus \phi_2 \quad \text{with $\dim \phi_i =2$ and $\simi \phi = \det \phi_1= \det \phi_2$.} \]
Note that the $\phi_i$'s are not necessarily distinct and may be reducible.  Observe further that the outer automorphism group $\operatorname{Out}(\SO_4) \cong \Z/2\Z$ of $\SO_4$ acts on $\Phi(\GSO_{2,2})$ via $(\phi_1, \phi_2) \mapsto (\phi_2, \phi_1)$. It is clear that the natural map above descends to give
\[    \text{$\Phi(\GSO_{2,2})$ modulo action of $\operatorname{Out}(\SO_4)$} \longrightarrow \Phi(\GSp_4). \]

 \vskip 5pt
The following lemma is the analog of Thm. \ref{T:summary}(i) and  (ii) for $L$-parameters.

 \begin{Lem} \label{L:endoscopic}
 (i) The map
 \[  \text{$\Phi(\GSO_{2,2})$ modulo action of $\operatorname{Out}(\SO_4)$} \longrightarrow \Phi(\GSp_4) \]
 is injective.
 \vskip 5pt

\noindent  (ii) If $\phi$ is a discrete series parameter of $\GSp_4$, then $\phi$ is either endoscopic or is irreducible as a 4-dimensional representation. If $\phi$ is endoscopic, then $\phi = \phi_1 \oplus \phi_2$ with $\phi_1 \ncong \phi_2$ irreducible, and the component group $A_{\phi}$ is $\Z/2\Z$. Otherwise, $A_{\phi}$ is trivial.
 \vskip 5pt

 (iii) If $\phi$ is a non-discrete series parameter of $\GSp_4$, then $\phi$ is either endoscopic or
 $\phi = \rho \oplus \rho \cdot \chi$ with $\simi \phi = \det \rho \cdot \chi$ and $\chi \ne 1$.
 The component group $A_{\phi}$ is trivial unless $\phi = \phi_1 \oplus \phi_2$ is endoscopic with $\phi_1 \cong \phi_2$ irreducible, in which case $A_{\phi} = \Z/2\Z$.
 \vskip 5pt

 In particular, under the injection of Lemma \ref{L:par}, the non-endoscopic $L$-parameters consists precisely of
 those pairs $(\phi, \mu) \in \Phi(\GL_4) \times \Phi(\GL_1)$ which arise as $(\phi_{\Pi}, \mu)$ in Thm. \ref{T:summary}(iii)(a) and (b).
 \end{Lem}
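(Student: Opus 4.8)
The plan is to reduce everything to linear algebra via Lemma \ref{L:par}. Identifying $\Phi(\GSp_4)$ with its image in $\Phi(\GL_4)\times\Phi(\GL_1)$, an $L$-parameter $\phi$ becomes a semisimple $4$-dimensional representation $(V,\phi)$ of $WD_F$ together with a nondegenerate symplectic form that $WD_F$ preserves up to the similitude character $\mu:=\simi\phi$. Since $\dim V=4$, there are only finitely many possibilities for the ``shape'' of $V$: decomposing $V=\bigoplus_\tau M_\tau\otimes\tau$ into $WD_F$-isotypic pieces and using $V^\vee\otimes\mu\cong V$ to pair each type $\tau$ with $\tau^\vee\otimes\mu$, the multiset of irreducible constituents of $V$ must be one of $\{4\}$, $\{2,2\}$, $\{2,1,1\}$, $\{1,1,1,1\}$, and in each case compatibility with the symplectic form rigidifies the pairing pattern. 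For every shape I would write down the centralizer $Z(\operatorname{Im}\phi)\subset\GSp_4(\C)$: it is the fibre product, over the common similitude, of the groups $\GO(M_\tau)$ or $\GSp(M_\tau)$ attached to the self-$\mu$-dual types $\tau$ (the choice dictated by the sign of the $\mu$-equivariant form on $\tau$) and the groups $\GL(M_\tau)$ attached to the dual pairs $\tau\not\cong\tau^\vee\otimes\mu$. From this one reads off $A_\phi=\pi_0\bigl(Z(\operatorname{Im}\phi)/Z_{\GSp_4}\bigr)$ and whether $Z(\operatorname{Im}\phi)/Z_{\GSp_4}$ is finite, which is the discrete series condition.

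With this dictionary, part (i) amounts to the assertion that a decomposition $V=\phi_1\oplus\phi_2$ into $2$-dimensional summands with $\det\phi_1=\det\phi_2=\mu$, when it exists, is unique up to interchanging $\phi_1$ and $\phi_2$: the composite $\Phi(\GSO_{2,2})\to\Phi(\GSp_4)\hookrightarrow\Phi(\GL_4)\times\Phi(\GL_1)$ is $(\phi_1,\phi_2)\mapsto(\phi_1\oplus\phi_2,\det\phi_1)$, and this composite is injective modulo $\operatorname{Out}(\SO_4)$ precisely when that uniqueness holds. I would verify it by going through the shape list: any $2$-dimensional irreducible constituent of $V$ must lie entirely inside $\phi_1$ or inside $\phi_2$, and when $V$ is a sum of characters the requirement that both pairs have product $\mu$ pins the partition down (a second admissible partition either forces an equality of characters which collapses it to the first, or else it alters $\mu$).

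Parts (ii) and (iii) then emerge from the same case analysis. When $V$ is irreducible, $Z(\operatorname{Im}\phi)=Z_{\GSp_4}$, so $A_\phi$ is trivial, $\phi$ is discrete series, and $\phi$ is visibly not endoscopic. When $V=\phi_1\oplus\phi_2$ with $\phi_1\not\cong\phi_2$ irreducible $2$-dimensional and $\det\phi_i=\mu$, one finds $Z(\operatorname{Im}\phi)\cong\GL_1\times\{\pm1\}$, so $A_\phi=\Z/2\Z$, $\phi$ is discrete series, and $\phi$ is endoscopic. Every remaining shape has $\dim Z(\operatorname{Im}\phi)/Z_{\GSp_4}\ge1$, hence is not a discrete series parameter, and --- by recording which proper Levi of $\GSp_4$ (Siegel, Klingen, or Borel) it factors through --- is seen to be either endoscopic or of the form $\rho\oplus\rho\chi$ with $\chi\ne1$ and $\simi\phi=\det\rho\cdot\chi$ (coming from the parameters that factor through the Siegel Levi). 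The step I expect to require the most care is the computation of the component groups: a nontrivial $\pi_0$ shows up exactly when $V\cong V_1\oplus V_1$ with $V_1$ irreducible $2$-dimensional and $\det V_1=\mu$, because then the multiplicity space $\C^2$ carries an induced \emph{symmetric} form and the corresponding factor of $Z(\operatorname{Im}\phi)$ is $\GO_2(\C)$, which has two connected components; in all other multiplicity-$\ge2$ configurations the induced form on the multiplicity space is symplectic, so the factor is the connected group $\GSp(M_\tau)$. Keeping track of these signs is the only genuinely delicate bookkeeping, and it yields the $A_\phi$-statements of both (ii) and (iii) at once.

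For the final assertion, parts (ii) and (iii) show that the non-endoscopic $L$-parameters are exactly those of shape $\{4\}$ (i.e. $\phi$ irreducible as a $4$-dimensional representation) together with those of the form $\rho\oplus\rho\chi$ with $\rho$ irreducible $2$-dimensional, $\chi\ne1$, and $\simi\phi=\det\rho\cdot\chi$ (for a non-endoscopic parameter of the second type $\rho$ is forced to be irreducible, since with $\rho$ reducible such a parameter would already be endoscopic). In the first family, $\phi=\phi_\Pi$ being irreducible is the same as $\Pi\boxtimes\mu$ being a discrete series representation of $\GL_4\times\GL_1$ that lies in the image of $\Phi(\GSp_4)$, which by Lemma \ref{L:par} is the same as $L(s,\bigwedge^2\phi_\Pi\otimes\mu^{-1})$ having a pole at $s=0$ --- this is precisely the list in Theorem \ref{T:summary}(iii)(a). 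The second family is, verbatim, the list in Theorem \ref{T:summary}(iii)(b). Hence the non-endoscopic parameters are exactly the union of the two lists, as claimed.
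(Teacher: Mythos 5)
Your proposal is correct, and it overlaps with the paper's proof in (i) (both reduce via Lemma \ref{L:par} to checking injectivity into $\Phi(\GL_4)\times\Phi(\GL_1)$, a reduction the paper then leaves as an exercise which you actually carry out). For (ii) and (iii), however, the routes diverge in organization. The paper works \emph{forward} from the defining criterion: a discrete series parameter does not factor through a proper parabolic, hence stabilizes no nonzero isotropic subspace, which immediately forbids $1$-dimensional summands and forces any $2$-dimensional summand to be a nondegenerate symplectic subspace; the non-discrete series case (iii) then proceeds by cases on whether an isotropic line or an isotropic plane is stabilized. You instead enumerate all possible ``shapes'' of the $4$-dimensional representation, compute $Z(\operatorname{Im}\phi)$ for each (fibre products of $\GO(M_\tau)$, $\GSp(M_\tau)$, $\GL(M_\tau)$ over the similitude), and only afterward read off which shapes are discrete series and what $A_\phi$ is. Both approaches are sound; the paper's is terser and lets the parabolic criterion do the heavy lifting, while yours is more systematic and makes the computation of the component groups (which the paper ``leaves to the reader'') fully explicit — in particular, your observation that a nontrivial $\pi_0$ comes exactly from the $\GO_2(\C)$ factor attached to a multiplicity-two irreducible $2$-dimensional constituent with symplectic self-pairing is a clean way to package the $A_\phi$ bookkeeping. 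Your derivation of the final assertion from (ii)/(iii) together with Lemma \ref{L:par} and Theorem \ref{T:summary}(iii) matches what the paper intends.
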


 \begin{proof}
 \noindent (i)   By Lemma \ref{L:par}, it suffices to show that the natural map
 \[  \text{$\Phi(\GSO_{2,2})$ modulo action of $\operatorname{Out}(\SO_4)$} \longrightarrow \Phi(\GL_4) \times \Phi(\GL_1) \]
 is injective. This is a simple exercise which we leave to the reader.
  \vskip 5pt

 \noindent (ii) If $\phi$ is irreducible, then  the centralizer in $\GSp_4(\C)$ of the image of $\phi$
 is $Z_{\GSp_4}(Im \phi) = Z_{\GSp_4}$, so that $A_{\phi}$ is trivial.
 If $\phi$ is reducible, then the hypothesis that $\phi$ does not factor through any proper parabolic subgroup implies that $\phi$ does not stabilize any nonzero isotropic subspaces. Thus
 $\phi$ cannot contain any
1-dimensional summand and any 2-dimensional summand must be a nondegenerate symplectic subspace. This shows that $\phi = \phi_1 \oplus \phi_2$ with $\phi_i$ irreducible 2-dimensional and $\simi \phi = \det \phi_i$. Moreover, $\phi_1 \ncong \phi_2$, for otherwise $\phi$ would stabilize a 2-dimensional isotropic subspace. Thus, $\phi$ has the form given in (i). Moreover,
 \[  Z_{\GSp_4}(Im(\phi)) \cong  \{ (a,b) \in \C^{\times} \times \C^{\times} : a^2 = b^2 \}  \subset (\GL_2(\C) \times \GL_2(\C))^0 \]
with $Z_{\GSp_4}$ sitting diagonally as a subgroup. Thus, $A_{\phi} \cong \Z/ 2\Z$,
\vskip 5pt

\noindent (iii) If $\phi$ stabilizes an isotropic line $L$ which affords the character $\chi$, then $\phi$ must stabilize another line $L'$ which has nonzero pairing with $L$ and affords the character $\chi^{-1} \cdot \simi \phi$. The subspace spanned by $L$ and $L'$ supports a 2-dimensional submodule $\phi_1$ of $\phi$ with $\simi \phi = \det \phi_1$.  It follows that $\phi$ is endoscopic.
On the other hand, if $\phi$ stabilizes an isotropic plane, but not a line, then
\[ \phi  = \rho \oplus  \rho^{\vee} \cdot \simi \phi = \rho \oplus \rho \cdot \chi \]
for an irreducible 2-dimensional $\rho$ and with $\simi \phi  = \chi \cdot \det \rho$. If $\chi = 1$, then
$\phi = 2 \cdot \rho$ is endoscopic. If $\chi \ne 1$, then $\phi$ is non-endoscopic of the type given in (ii). We leave the determination of $A_{\phi}$ in the various cases to the reader.
 \end{proof}

\vskip 10pt

\noindent{\bf Remarks:}  In a letter [Z] to Vigneras (dated Nov. 25, 1984 and mentioned at the end of the introduction of [V]), W. Zink gave an argument that there do not exist primitive symplectic representations of $W_F$. However, this is not the case, as one can construct such L-parameters
when the residue characteristic $p$ of $F$ is $2$; see [GT2, Prop. 5.3].
\vskip 10pt

\section{\bf Construction of $L$-Packets and Exhaustion}  \label{S:packets}

In this section, we shall construct the map $L$, show that it is surjective and verify (i), (ii), (iii), (iv) and (vii) of the Main Theorem.
Since we are working with reductive but non-semisimple groups $G$, let us mention that for the rest of the paper, by a discrete series (resp. tempered) representation of $G(F)$, we mean a representation which is equal to a unitary discrete series (resp. tempered) representation after twisting by a 1-dimensional character.
\vskip 15pt

\noindent{\bf \underline{Definition of the Map $L$}}
\vskip 5pt

 According to Thm. \ref{T:dichotomy}, the irreducible representations $\pi$ of $\GSp_4(F)$ fall into two disjoint families of Type (I) or (II).
 \vskip 5pt

\noindent{\bf Type I:}  If $\pi$ is of Type (I), then there is an irreducible representation $\tau^D_1 \boxtimes \tau^D_2$
of $\GSO(D)$ (where $D$ is possibly split) such that
\[  \pi = \theta(\tau_1^D \boxtimes \tau_2^D) = \theta(\tau_2^D \boxtimes \tau_1^D). \]
By the Jacquet-Langlands correspondence and the local Langlands correspondence for $\GL_2$,
each $\tau^D_i$ gives rise to an irreducible 2-dimensional representation $\phi_i$ of $WD_F$
such that $\det \phi_1 = \det \phi_2$. We define $L(\pi)$ to be the parameter
\[  \phi  = \phi_1 \oplus \phi_2: WD_F \longrightarrow (\GL_2(\C) \times \GL_2(\C))^0 \subset \GSp_4(\C). \]
 By Thm. \ref{T:summary}(i, ii) and Lemma \ref{L:endoscopic}, we see that $L(\pi)$ is a discrete series parameter iff $\pi$ is a discrete series representation.

\vskip 10pt

\noindent{\bf Type II:}   If $\pi$ is of type (II), then  the theta lift of $\pi$ to $\GSO(V)$ is nonzero.  Regarding $\GSO(V)$ as a quotient of $\GL_4(F) \times \GL_1(F)$, we may write
\[  \theta(\pi) = \Pi \boxtimes \mu. \]
Note that the central character of $\pi$ is necessarily equal to $\mu$. Then we set
 \[ L(\pi) =  \phi_{\Pi} \times \mu: WD_F \longrightarrow \GL_4(\C) \times \GL_1(\C), \]
 where $\phi_{\Pi}$ is the Langlands parameter of $\Pi$.
We need to show that $L(\pi)$  factors through the inclusion $\iota: \GSp_4(\C) \hookrightarrow \GL_4(\C) \times \GL_1(\C)$.
\vskip 5pt

If $\pi$ is a discrete series representation as in Thm. \ref{T:summary}(iii)(a), then
$\Pi$ is a discrete series representation of $\GL_4(F)$ such that  $L(s,  \wedge^2\phi_{\Pi}  \otimes \mu^{-1})$ has a pole at $s = 0$. By Lemma \ref{L:par}, we conclude that $\phi_{\Pi} \times \mu$ factors through $\GSp_4(\C)$ and is a discrete series parameter.
On the other hand, if $\pi$ is a non-discrete series representation as in Thm. \ref{T:summary}(iii)(b), then we have
\[  \phi_{\Pi} = \phi \oplus \phi \cdot \chi \quad \text{and} \quad \mu = \det \phi \cdot \chi,\quad \text{with $\chi \ne 1$} . \]
One may consider the nondegenerate symplectic form which is totally isotropic on $\phi$ and $\phi \cdot \chi$ and such that the pairing between $\phi$ and $\phi \cdot \chi$ is given by the natural map
\[  \phi \otimes \phi \cdot \chi \longrightarrow \bigwedge^2 \phi \cdot \chi = \mu. \]
It is clear that this last map is $WD_F$-equivariant, so that $L(\pi)$ is a non-discrete series $L$-parameter of $\GSp_4$.
Thus,  we see again that $L(\pi)$ is a discrete series parameter iff $\pi$ is a discrete series representation.

\vskip 10pt

Using the fact that the theta correspondence preserves central characters and the basic properties of the local Langlands correspondence for $\GL_n$, one easily checks that the central character of $\pi$ is equal to the similitude character of $L(\pi)$, and for any character $\chi$, one has
\[  L(\pi \otimes \chi)  = L(\pi) \otimes \chi. \]
We have thus completed the definition of the map $L$ and verified properties (i), (iii) and (iv) of the Main Theorem.

\vskip 10pt
Observe that, in Thm. \ref{T:dichotomy}, there is another
partition of the set of irreducible representations of $\GSp_4(F)$ into two sets , namely those of Type (A) or (B). One could have defined the map $L$ using this partition as follows.
\vskip 5pt

\noindent{\bf Type A:} If $\pi$ is of Type (A), then $\pi = \theta(\tau^D_1 \boxtimes \tau^D_2)$ where now $D$ is the quaternion division algebra. One then defines $L(\pi)$ as in the Type (I) case above.
\vskip 5pt

\noindent{\bf Type B:}  If $\pi$ is of Type (B), then the theta lift of $\pi$ to $\GSO(V)$ is nonzero and has the form $\Pi \boxtimes \mu$. One then defines $L(\pi)$ following the Type (II) case above.

\vskip 5pt

The only potential difference in these two definitions is for those representations $\pi$ which have nonzero theta lifts to $\GSO(V_2) = \GSO_{2,2}(F)  $.  However,
it follows from Thm. \ref{T:summary}(iv) that the two definitions are in fact the same.

 \vskip 10pt

\noindent{\bf \underline{Surjectivity and Fibers}}
\vskip 5pt

For a given $L$-parameter
\[  \phi: WD_F \longrightarrow \GSp_4(\C), \]
with $\simi(\phi) = \mu$, we must now determine the fiber of the map $L$ over $\phi$, and in particular show that it is non-empty. From the construction of $L$, observe that the parameter $L(\pi)$ is endoscopic if and only if $\pi$ is of Type I. Thus, we see that $\# L_{\phi} \leq  2$ if $\phi$ is endoscopic, and $\# L_{\phi} \leq 1$ otherwise. We want to show that $\# L_{\phi} = \# A_{\phi}$, and we consider the endoscopic and non-endoscopic cases separately.
 \vskip 5pt

 \noindent {\bf Endoscopic case: } If $\phi = \phi_1 \oplus \phi_2$ is endoscopic, then $\phi$ gives rise to an $L$-parameter of $\GSO(V_2)$ and thus determines a representation $\tau_1 \boxtimes \tau_2$ of $\GSO(V_2)$, where $\tau_i$ has $L$-parameter $\phi_i$. If the $\phi_i$'s are irreducible, then $\phi$ is also an $L$-parameter of $\GSO(D)$ and thus determines a representation $\tau_1^D \boxtimes \tau_2^D$ of $\GSO(D)$, with $\tau_i^D$ the Jacquet-Langlands lift of $\tau_i$. By Thm. \ref{T:summary}(i) and (ii), both $\tau_1 \boxtimes \tau_2$ and $\tau_1^D \boxtimes \tau_2^D$ have nonzero theta lifts to $\GSp_4$, and it follows from the construction of $L$ that
 \[  L_{\phi} = \{ \theta(\tau_1 \boxtimes\tau_2), \theta(\tau_1^D \boxtimes \tau_2^D) \}, \]
where the latter representation is regarded as zero if one of the $\phi_i$'s is reducible.
In view of Lemma \ref{L:endoscopic}, we see that $\# L_{\phi} = \# A_{\phi}$.
Moreover, when $\# L_{\phi} =2$, we have $A_{\phi} = \Z/2\Z$,
which has two irreducible characters ${\bf 1}$ and ${\bf sign}$. In that case,  we set
\[
\pi_{\bf 1} = \theta(\tau_1 \boxtimes \tau_2) \quad \text{and} \quad  \pi_{\bf sign} =  \theta(\tau_1^D \boxtimes \tau_2^D).  \]
Note that the representation $\pi_{\bf 1}$ is generic, whereas $\pi_{\bf sign}$ is non-generic.

\vskip 5pt

\noindent {\bf Non-Endoscopic case:}   If $\phi$ is non-endoscopic with similitude character $\mu$, then $\phi$ is as described in Lemma \ref{L:endoscopic} or Thm. \ref{T:summary}(iii). In this case, Thm. \ref{T:summary}(iii) implies immediately that $L_{\phi}$ is non-empty, so that $\# L_{\phi} = 1$. Indeed, if $\Pi$ is the representation of $\GL_4(F)$ with $L$-parameter $\phi$, then the representation $\Pi \boxtimes \mu$ of $\GSO(V)$ has nonzero theta lift to $\GSp_4$ by Thm. \ref{T:summary}(iii), so that
\[  L_{\phi} = \{ \theta(\Pi \boxtimes \mu) \}. \]

\vskip 10pt

Thus, for each $L$-parameter $\phi$, we have determined the fiber $L_{\phi}$, which consists of one or two representations parametrized by the irreducible characters of $A_{\phi}$.
Indeed, in the companion paper \cite[Prop. 13.1]{GT4}, we explicitly write down the $L$-parameters of all non-supercuspidal representations.
We should mention that  for a non-discrete series parameter $\phi$,  a construction of the $L$-packet has been carried out by B. Roberts and R. Schmidt in their monograph \cite{RS} using the results of Sally-Tadic \cite{ST}.  The reader can refer to the table in \cite[\S A.5]{RS} for the explicit description. Using \cite[Prop. 13.1]{GT4}, it is easy to check that our definition of $L_{\phi}$ agrees with theirs.
\vskip 5pt

We have thus verified property (ii) in the Main Theorem.

\vskip 10pt

\noindent{\bf  \underline{Genericity}}
\vskip 5pt

Let us conclude this section by verifying property (vii) of the Main Theorem, which relates generic representations and the adjoint $L$-factor. For those $L$-packets which contain a non-supercuspidal representation, this is a straightforward calculation using the companion paper \cite{GT4} and the structure of principal series representations of $\GSp_4(F)$ \cite{ST}.  The verification in this case has been carried out in a recent paper of Asgari-Schmidt \cite{AS}. However, we give an independent proof in \cite[Prop. 13.2]{GT4}, as our  verification is much more concise than that of \cite{AS}.

\vskip 5pt
Suppose on the other hand that $L_{\phi}$ contains only supercuspidal representations. From our construction of the discrete series $L$-packets, it is clear that such an $L$-packet always contains a generic supercuspidal representation, since it contains an element which is the theta lift of a (necessarily generic) supercuspidal representation of $\GSO_{2,2}(F)  $ or $\GSO(V)=\GSO_{3,3}(F)$. Thus, we need to show that
$L(s, Ad \circ \phi)$ is holomorphic at $s=1$.  But we know that $\phi$ is either an irreducible 4-dimensional representation of $W_F$, or else $\phi = \phi_1 \oplus \phi_2$ where $\phi_1 \ne \phi_2$ are irreducible 2-dimensional representations of  $W_F$ with $\det \phi_1 = \det \phi_2$.  Thus, up to a twist by an unramified character, $\phi$ is of Galois type and has bounded image in $\GL_4(\C)$. Thus, $Ad \circ \phi$ is a representation of $W_F$ with bounded image. It follows that $L(s, Ad \circ \phi)$ is holomorphic in $\operatorname{Re}(s) > 0$.

\vskip 15pt

\section{\bf Preservation of Local Factors}  \label{S:factors}

In this section, we shall check that the local Langlands correspondence we defined in the previous section respects the $L$-, $\gamma$- and $\epsilon$-factors of pairs for generic representations and non-generic non-supercuspidal representations. This will prove property (v) of the Main Theorem.  More precisely, we have:
\vskip 10pt

\begin{Thm}  \label{T:localfac}
Let $\pi$ be an irreducible generic or non-supercuspidal representation of $\GSp_4(F)$ and $\sigma$ an irreducible representation of
$\GL_r(F)$.  Suppose that $\phi_{\pi}$ and $\phi_{\sigma}$ are the $L$-parameters of $\pi$ and $\sigma$ respectively. Then
\[ \begin{cases}
 L(s, \pi \times \sigma) = L(s, \phi_{\pi} \otimes \phi_{\sigma}), \\
\epsilon(s, \pi \times \sigma, \psi) = \epsilon(s, \phi_{\pi} \otimes \phi_{\sigma}, \psi), \\
\gamma(s, \pi \times \sigma, \psi) = \gamma(s, \phi_{\pi} \otimes \phi_{\sigma}, \psi).
\end{cases} \]
\end{Thm}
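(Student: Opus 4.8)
The plan is to reduce the identities for $\GSp_4 \times \GL_r$ to already-established identities for $\GL_n \times \GL_r$ via the explicit description of $\phi_\pi$ coming from the theta correspondence, separating the argument according to whether $\pi$ is of Type I or Type II. For $\pi$ of Type I, recall from the construction of $L$ that $\phi_\pi = \phi_1 \oplus \phi_2$ is endoscopic, so $\phi_\pi \otimes \phi_\sigma = (\phi_1 \otimes \phi_\sigma) \oplus (\phi_2 \otimes \phi_\sigma)$, and additivity of the Artin $L$-, $\epsilon$- and $\gamma$-factors reduces the right-hand side to a product of $\GL_2 \times \GL_r$ factors for $\phi_i \otimes \phi_\sigma$, which are known by Harris--Taylor/Henniart. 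The task is then to show that $L(s, \pi \times \sigma)$ (the Shahidi factor attached to $spin \boxtimes std$, as in (a) of Table~\ref{table}) equals $\prod_i L(s, \phi_i \otimes \phi_\sigma)$, and similarly for $\epsilon$ and $\gamma$. Here one uses that $\pi = \theta(\tau_1^D \boxtimes \tau_2^D)$ or $\theta(\tau_1 \boxtimes \tau_2)$; since $\pi$ is generic in the $\GSO_{2,2}$-case, the multiplicativity of Shahidi's factors under the relevant Levi (and the fact that for the generic member $\pi_{\bf 1}$ the local factor can be read off from the Rankin--Selberg factors of $\tau_1 \times \sigma$ and $\tau_2 \times \sigma$) gives the claim. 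For the non-generic member $\pi_{\bf sign}$ one invokes that, by construction, it lies in the same $L$-packet and shares the same $L$-parameter; but since $\pi_{\bf sign}$ is supercuspidal when $\tau_1^D \neq \tau_2^D$, it is in fact excluded from the hypothesis of the theorem, and when it is not supercuspidal it is non-discrete-series, where Shahidi's factors are computed by multiplicativity along a proper Levi whose simple factors are $\GL_k$'s.

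For $\pi$ of Type II, we have $\theta(\pi) = \Pi \boxtimes \mu$ on $\GSO(V) \cong (\GL_4 \times \GL_1)/\{\ldots\}$, and by construction $\phi_\pi = \phi_\Pi$ as a $4$-dimensional representation (it factors through $\GSp_4(\C)$ by Lemma~\ref{L:par}, but as a representation of $WD_F$ it is just $\phi_\Pi$). Therefore $\phi_\pi \otimes \phi_\sigma = \phi_\Pi \otimes \phi_\sigma$, and the right-hand side is the Artin $\GL_4 \times \GL_r$ factor, equal by local Langlands for $\GL_n$ to $L(s, \Pi \times \sigma)$, i.e. the Rankin--Selberg / Shahidi factor in (e) of Table~\ref{table}. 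It then remains to identify $L(s, \pi \times \sigma)$ with $L(s, \Pi \times \sigma)$, i.e. to show that the Shahidi $spin \boxtimes std$ factor for $\GSp_4 \times \GL_r$ is preserved under the theta lift to $\GSO(V) \cong \GL_4 \times \GL_1$. This is where Proposition~\ref{P:unram} enters for the unramified components in a global argument, or, locally, where one uses the compatibility of the theta correspondence with Whittaker models (Lemma~\ref{L:Whit}) together with Rallis' computation of the doubling zeta integral: the point is that the $spin$-$L$-function of $\GSp_4$ and the Rankin--Selberg $L$-function of the $\GL_4$-theta lift are defined by matching integrals, so the identity of local factors follows from the local functional equation and the uniqueness of the $\gamma$-factor satisfying the Shahidi axioms (\cite[Thm. 3.5]{Sh}), once we know the identity globally by a standard globalization. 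The globalization step—choosing a cuspidal automorphic representation with prescribed local component at one place and controlled (e.g. unramified or Steinberg) behaviour elsewhere, for which the global theta lift and the global $L$-functions are understood—is the technical heart.

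The main obstacle I expect is precisely this last identification of $L(s,\pi\times\sigma)$ with the $\GL_4\times\GL_r$ Rankin--Selberg factor (equivalently with $\prod L(s,\phi_i\otimes\phi_\sigma)$ in the endoscopic case): this is not a formal consequence of the construction of $L$, because the Shahidi $spin\boxtimes std$ factor is defined intrinsically via intertwining operators on $\GSpin_{2r+5}$, whereas the theta lift only directly controls the underlying representation, not a priori its Langlands--Shahidi local factors. One must either (a) run a global argument comparing the two families of $L$-functions using strong multiplicity one for $\GL_4$ and the known functoriality of theta (via the doubling method and Proposition~\ref{P:unram} at almost all places, pinning down the missing local factor by the local functional equation at the bad places), or (b) invoke the uniqueness characterization of Shahidi's $\gamma$-factor together with the fact, established in the companion paper \cite{GT4}, that the theta lift realizes the expected functorial transfer; then $L$ and $\epsilon$ follow from $\gamma$ by standard induction on the Langlands classification data, reducing non-tempered cases to tempered and ultimately discrete-series cases by multiplicativity. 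For $r = 1, 2$ — which is all that is needed for the characterization statement (viii) — the relevant Rankin--Selberg theory and the doubling integrals are completely available, so the argument is cleanest there and the general $r$ case then follows by the same mechanism.
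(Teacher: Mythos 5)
Your outline has the right shape — reduce to the generic supercuspidal case by multiplicativity, then handle that case by a global argument using the theta lift and unramified functoriality (Prop.~\ref{P:unram}) — and you correctly identify that the substantive step is not formal: the Langlands--Shahidi factor for $\GSp_4\times\GL_r$ is defined via intertwining operators on $\GSpin_{2r+5}$ and must be matched to the Artin/Rankin--Selberg factor by some external comparison. But there are two genuine gaps.

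First, for Type~I with $\pi$ supercuspidal, the step ``the multiplicativity of Shahidi's factors under the relevant Levi ... gives the claim'' does not work. The group $\GSO_{2,2}$ is the elliptic endoscopic group, \emph{not} a Levi subgroup, of $\GSp_4$; when $\theta(\tau_1\boxtimes\tau_2)=\pi$ is supercuspidal (which happens precisely when $\tau_1\neq\tau_2$ are both supercuspidal), $\pi$ is not a constituent of any parabolic induction, so local multiplicativity of the Shahidi factors gives you nothing. The actual argument globalizes $\tau_1\boxtimes\tau_2$ to a generic cuspidal $\Xi$ on $\GSO_{2,2}(\A_{\mathbb F})$, forms the global theta lift $\Theta(\Xi)$ on $\GSp_4(\A_{\mathbb F})$, and compares $\gamma(s,\pi\times\sigma,\psi)$ with $\gamma(s,(\tau_1\boxtimes\tau_2)\times\sigma,\psi)$ at the distinguished place via the global functional equations — exactly the same mechanism as Type~II, not a local one.

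Second, the global ingredient you propose (the doubling method plus strong multiplicity one for $\GL_4$) is not what is used, and the doubling method in particular is a red herring: it produces the \emph{standard} degree-$5$ $L$-function, not the degree-$4$ Spin $L$-function in $L(s,\pi\times\sigma,spin\boxtimes std)$. The comparison that actually closes the argument is Shahidi's global functional equation for partial $L$-functions \cite[Thm.~3.5~(3.14)]{Sh}, applied simultaneously to $\Pi\times\Sigma$ on $\GSp_4\times\GL_r$ (inside $\GSpin_{2r+5}$) and to $\Theta(\Pi)\times\Sigma$ on $\GL_4\times\GL_1\times\GL_r$, where $\Pi,\Sigma$ are the globalizations, all components outside $v_0$ are unramified or archimedean, and Prop.~\ref{P:unram} together with Adams--Barbasch at archimedean places makes the local factors agree at those places; dividing the two functional equations then forces $\gamma(s,\Pi_{v_0}\times\Sigma_{v_0},\psi_{v_0})=\gamma(s,\Theta(\Pi_{v_0})\times\Sigma_{v_0},\psi_{v_0})$, and the $L$- and $\epsilon$-identities follow from the $\gamma$-identity because $L$-factors of discrete series are holomorphic in $\operatorname{Re}(s)>0$ (as in \cite[Cor.~5.1]{MS}). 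Your ``route (b)'' via a uniqueness characterization of Shahidi's $\gamma$-factor is defensible in spirit, but it is implicitly the same global argument packaged differently, and it too needs the globalization to be set up so that the theta lift is controlled everywhere — which is the part you leave vague.
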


\begin{proof}
The proof is entirely similar to that of \cite[Prop. 5.4]{MS}.
By using the multiplicativity of $\gamma$-factors and the explicit determination of local theta correspondence given in the companion paper \cite{GT4}, we are reduced by a standard argument to the case when $\pi$ and $\sigma$ are generic supercuspidal representations.
\vskip 10pt

We first consider the case when the theta lift of $\pi$ to $\GSO_{2,2}(F)  $ is zero, so that its theta lift to $\GSO(V)$ is nonzero supercuspidal.
 By \cite[Lemma 5.2]{MS}, one can find a totally imaginary number field $\mathbb{F}$ such that
$\mathbb{F}_{v_0} = F$ for some place $v_0$. We may consider the split group $\GSp_4$ over $\mathbb{F}$.
By \cite[Prop. 5.1]{Sh}, one can find a globally generic cuspidal representation $\Pi$ of $\GSp_4(\A_{\mathbb{F}})$ such that  $\Pi_{v_0} \cong \pi$ and for all other finite places $v \ne v_0$, $\Pi_v$ is unramified. By \cite{GRS},
the global theta lift $\theta(\Pi)$ of $\Pi$ to $\GSO(V)$ is nonzero and cuspidal (since its local component $\Pi_{v_0}$ does not participate in the local theta correspondence with $\GSO_{2,2}(F)$).
Similarly, let $\Sigma$ be a cuspidal representation of $\GL_r(\A_{\mathbb{F}})$ such that
$\Sigma_{v_0} = \sigma$ and $\Sigma_v$ is unramified for all finite places $v \ne v_0$.
\vskip 10pt

By the functoriality of the theta correspondence for unramified representations at finite places (see Prop. \ref{P:unram}), we see that for all finite $v \ne v_0$,
\[ \begin{cases}
 L(s, \Pi_v \times \Sigma_v) = L(s, \theta(\Pi_v) \times \Sigma_v) = L(s, \phi_{\Pi_v} \otimes \phi_{\Sigma_v}) \\
\epsilon(s, \Pi_v \times \Sigma_v, \psi_v) = \epsilon(s, \theta(\Pi_v) \times \Sigma_v) = \epsilon(s, \phi_{\Pi_v} \otimes \phi_{\Sigma_v}, \psi_v) \\
\gamma(s, \Pi_v \times \Sigma_v, \psi_v) = \gamma(s, \theta(\Pi_v) \times \Sigma_v) = \gamma(s, \phi_{\Pi_v} \otimes \phi_{\Sigma_v}, \psi_v).
\end{cases} \]
Here, if we regard $\theta(\Pi_v)$ as a representation $\Pi'_v \boxtimes \mu_v$ of $\GL_4(\mathbb{F}_v) \times \GL_1(\mathbb{F}_v)$, then the $L$-factor $L(s, \theta(\Pi_v) \times \Sigma_v)$  is simply the Rankin-Selberg $L$-factor
$L(s, \Pi'_v \times \Sigma_v)$ (cf. (e) of Table \ref{table}).
Moreover, the theta correspondences over $\C$ have been completely determined by Adams-Barbasch \cite{AB}. Though they work with isometry groups, there is no subtlety in passing from isometry groups to similitude groups over $\C$. Thus, from their results, one sees that the theta correspondence from $\GSp_4$ to $\GSO(V)$ is functorial with respect to the inclusion
\[  \iota: \GSp_4(\C) \longrightarrow \GL_4(\C) \times \GL_1(\C). \]
Thus the above identities of local factors also hold over the archimedean places of $\mathbb{F}$.

\vskip 10pt
Now by \cite[Thm. 3.5 (3.14)]{Sh},  we see that for some finite set $S$ of places including all the archimedean ones, we have
\[  \begin{cases}
L^S(s, \Pi \times \Sigma) =  \left( \prod_{v \in S}  \gamma(s, \Pi_v \times \Sigma_v, \psi_v) \right) \cdot L^S(1-s, \Pi^{\vee} \times \Sigma^{\vee})  \\
L^S(s, \Theta(\Pi) \times \Sigma) =  \left( \prod_{v \in S} \gamma(s, \Theta(\Pi_v) \times \Sigma_v, \psi_v) \right) \cdot L^S(1-s, \Theta(\Pi^{\vee}) \times \Sigma^{\vee}).
\end{cases} \]
Taking everything into account, one deduces that at $v = v_0$
\[  \gamma(s, \Pi_{v_0} \times \Sigma_{v_0}, \psi_{v_0})=
 \gamma(s, \Theta(\Pi_{v_0}) \times \Sigma_{v_0}, \psi_{v_0}). \]
From the definition of the parameter $\phi_{\pi}$ and the fact that  the local Langlands correspondence for $\GL_4$ respects local factors of pairs, we conclude that
\[  \gamma(s, \pi \otimes \sigma, \psi) = \gamma(s, \phi_{\pi} \otimes \phi_{\sigma}, \psi). \]
Then as in \cite[Cor. 5.1]{MS}, since one knows that $L$-functions of discrete series representations are holomorphic in $\operatorname{Re}(s) > 0$,  we deduce the desired identity of $L$-factors from that of the $\gamma$-factors. From this, the identity of $\epsilon$-factors also follows. This proves the theorem for those supercuspidal $\pi$ which do not lift to $\GSO_{2,2}(F) $.
\vskip 10pt

Consider now the case when the theta lift of $\pi$ to $\GSO_{2,2}(F) $ is non-zero. Thus, there is a supercuspidal representation $\tau_1 \boxtimes \tau_2$ of $\GSO_{2,2}(F) $ such that $\theta(\tau_1 \boxtimes \tau_2) = \pi$. Now we can globalize $\tau_1 \boxtimes \tau_2$ as above and repeat the same argument to get
\[  \gamma(s, \pi \times \sigma, \psi) = \gamma(s, (\tau_1 \boxtimes \tau_2) \times \sigma, \psi) =  \gamma(s, \phi_{\pi} \otimes \phi_{\sigma},\psi). \]
 From this, the desired identities of $L$-factors and $\epsilon$-factors follow. The theorem is proven.
 \end{proof}
\vskip 10pt

\begin{Prop}
Suppose that one has a theory of $\gamma$-factors for all irreducible representations of
$\GSp_4(F) \times \GL_r(F)$ satisfying the expected properties, for example those listed in \cite[Thm. 4]{LR}. Then the conclusion of Theorem \ref{T:localfac} holds for all irreducible representations.
\end{Prop}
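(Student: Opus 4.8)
The plan is to run the global argument from the proof of Theorem~\ref{T:localfac}, with the assumed theory of $\gamma$-factors playing the role of Shahidi's theory. By the multiplicativity of $\gamma$-factors in the $\GL_r$-variable (one of the expected properties) we may assume that $\sigma$ is supercuspidal, and by Theorem~\ref{T:localfac} itself the only remaining case is that in which $\pi$ is a non-generic supercuspidal representation of $\GSp_4(F)$. By Theorem~\ref{T:summary}(i) such a $\pi$ is the theta lift $\theta(\tau_1^D\boxtimes\tau_2^D)$ of a (necessarily supercuspidal) representation of $\GSO(D)$, with $D$ the quaternion division algebra and $\tau_1^D\neq\tau_2^D$; it lies in the two-element $L$-packet $L_\phi=\{\pi_{\bf 1},\pi_{\bf sign}\}$ attached to an endoscopic parameter $\phi=\phi_1\oplus\phi_2$, where $\pi=\pi_{\bf sign}$, the generic member $\pi_{\bf 1}=\theta(\tau_1\boxtimes\tau_2)$ is the theta lift of the corresponding representation of $\GSO_{2,2}(F)$, and $\pi$ and $\pi_{\bf 1}$ have the same central character $\simi\phi$. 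Since the assumed theory agrees with Shahidi's for generic representations, Theorem~\ref{T:localfac} already gives the desired $L$-, $\epsilon$- and $\gamma$-identities for $\pi_{\bf 1}$; it suffices to transfer the $\gamma$-identity to $\pi$.

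The one genuinely new point is the globalization. As $\pi$ is not generic we cannot invoke Shahidi's automorphic globalization \cite[Prop.~5.1]{Sh}; instead we globalize the data on the anisotropic group $\GSO_{4,0}$. Choose a totally imaginary number field $\mathbb{F}$ with $\mathbb{F}_{v_0}=F$ and a quaternion algebra $\tilde D$ over $\mathbb{F}$, division at $v_0$ and split at every other finite place, globalize each $\tau_i^D$ to a cuspidal representation $\tilde\tau_i^D$ of $\tilde D^\times(\A_{\mathbb{F}})$ with local component $\tau_i^D$ at $v_0$, unramified elsewhere at finite places and generic at the archimedean places, and with $\tilde\tau_1^D$, $\tilde\tau_2^D$ having distinct global Jacquet--Langlands lifts, and form the cuspidal representation $\tilde\tau_1^D\boxtimes\tilde\tau_2^D$ of $\GSO(\tilde D)(\A_{\mathbb{F}})$. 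Its global theta lift $\tilde\pi$ to $\GSp_4(\A_{\mathbb{F}})$ is nonzero (Rallis inner product formula \cite{GRS}, together with the non-vanishing of the relevant local theta lifts from Theorem~\ref{T:summary}) and cuspidal: its theta lift to $\GSp_2(\A_{\mathbb{F}})=\GL_2(\A_{\mathbb{F}})$ vanishes, since already the local theta lift of $\tau_1^D\boxtimes\tau_2^D$ to $\GL_2(F)$ is zero --- otherwise $\pi=\theta(\tau_1^D\boxtimes\tau_2^D)$ would be a subquotient of a representation induced from a proper parabolic subgroup of $\GSp_4(F)$, contradicting supercuspidality. By local--global compatibility of the theta correspondence, $\tilde\pi_{v_0}=\pi$; at every other finite place $\tilde\pi_v$ is unramified, and at each archimedean place it is a generic theta lift from $\GSO_{2,2}$ whose parameter is that predicted by Adams--Barbasch \cite{AB}. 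Globalizing $\sigma$ to a cuspidal representation $\tilde\sigma$ of $\GL_r(\A_{\mathbb{F}})$ that is unramified away from $v_0$, one then has, exactly as in the proof of Theorem~\ref{T:localfac}, the local identity $\gamma(s,\tilde\pi_v\times\tilde\sigma_v,\psi_v)=\gamma(s,\phi_{\tilde\pi_v}\otimes\phi_{\tilde\sigma_v},\psi_v)$ at every place $v\neq v_0$ --- by Proposition~\ref{P:unram} at the unramified places and by the archimedean Langlands correspondence (with Shahidi's theory, available since $\tilde\pi_v$ is generic) at the archimedean ones.

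To conclude, observe that the functorial spin lift of $\tilde\pi$ to $\GL_4(\A_{\mathbb{F}})$ is the isobaric automorphic representation $\tilde\tau_1^{JL}\boxplus\tilde\tau_2^{JL}$ built from the global Jacquet--Langlands lifts, whose local parameters agree with those of $\tilde\pi$ everywhere; in particular at $v_0$ its parameter is $\phi_1\oplus\phi_2=\phi_\pi$. Hence the partial spin $L$-function $L^S(s,\tilde\pi\times\tilde\sigma)$ equals the product $L^S(s,\tilde\tau_1^{JL}\times\tilde\sigma)\,L^S(s,\tilde\tau_2^{JL}\times\tilde\sigma)$ of Rankin--Selberg $L$-functions, where $S$ consists of $v_0$ and the archimedean places. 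Comparing the global functional equation of $L^S(s,\tilde\pi\times\tilde\sigma)$ furnished by the assumed theory (which, among the expected properties, expresses it through a product of local $\gamma$-factors over $S$) with the product of the two $\GL_4\times\GL_r$ Rankin--Selberg functional equations, and cancelling the factors at the places $v\neq v_0$ by the local identities above, one obtains
\[ \gamma(s,\pi\times\sigma,\psi)=\gamma(s,\phi_\pi\otimes\phi_\sigma,\psi). \]
The identities for the $L$- and $\epsilon$-factors then follow as in \cite[Cor.~5.1]{MS}, using the holomorphy of the $L$-factors of discrete series representations in $\operatorname{Re}(s)>0$ (another of the expected properties).

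I expect the main obstacle to be the globalization step: one must produce a \emph{cuspidal} automorphic representation of $\GSp_4(\A_{\mathbb{F}})$ localizing to $\pi$ at $v_0$ and to a generic or unramified representation at every other place, which is precisely why we globalize on $\GSO_{4,0}$ rather than on $\GSp_4$ directly, and the non-vanishing and cuspidality of the resulting theta lift must be extracted from \cite{GT4}. Granting this, and granting that the assumed $\gamma$-factor theory fits into a family over number fields with the usual global functional equation and the expected holomorphy of its $L$-factors, the rest of the argument is formally identical to that of Theorem~\ref{T:localfac}. Alternatively, one can combine the stability of $\gamma$-factors under highly ramified twists with property~(vi) of the Main Theorem and the relation between $\gamma$-factors and Plancherel measures: stability reduces the identity to the already-known one for $\pi_{\bf 1}\times\sigma$ after a sufficiently ramified twist of $\sigma$ (using that $\pi$ and $\pi_{\bf 1}$ share a central character), while (vi) controls the untwisted contribution.
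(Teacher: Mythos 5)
Your overall strategy is the same as the paper's: reduce to the case of a non-generic supercuspidal $\pi = \theta(\tau_1^D\boxtimes\tau_2^D)$ and supercuspidal $\sigma$, globalize on the anisotropic group $\GSO(D)$, take the global theta lift to $\GSp_4$, and run the comparison of global functional equations with all local factors controlled away from $v_0$. However, there is a concrete gap in your globalization. You posit a quaternion algebra $\tilde D$ over a \emph{totally imaginary} number field $\mathbb{F}$ that is division at $v_0$ and split at every other finite place. But a quaternion algebra over a number field must be ramified at an even number of places, and over a totally imaginary field there are no archimedean places available to carry ramification (quaternion algebras over $\C$ are split). So the algebra you want does not exist, and the globalization collapses at the first step.

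The paper's fix is exactly to admit ramification at a second finite place $v_1$, so that $\mathbb{D}$ is ramified at $\{v_0,v_1\}$, and then to carefully choose the local component of $\Pi^{\mathbb{D}}$ at $v_1$: one takes a representation $\tau\boxtimes\tau$ of $\GSO(\mathbb{D})(\mathbb{F}_{v_1})$ with $\tau$ of dimension $>1$. This choice makes the Jacquet--Langlands transfer $JL(\Pi^{\mathbb{D}})$ cuspidal (hence generic at all places outside $\{v_0,v_1\}$), and it makes the theta lift $\Theta(\Pi^{\mathbb{D}})_{v_1}$ \emph{non-supercuspidal} --- a point you need, since the required local factor identity at $v_1$ is then already supplied by Theorem~\ref{T:localfac}. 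Your argument that the global lift is cuspidal is also a bit off: the non-vanishing and cuspidality of $\Theta(\Pi^{\mathbb{D}})$ come from \cite{GT3}, not from a local tower argument at $v_0$ alone. If you replace your globalization by the paper's two-place version and use the non-supercuspidality of $\Theta(\Pi^{\mathbb{D}})_{v_1}$ to control the factor at $v_1$, the rest of your outline goes through and matches the paper's sketch. Your final remark about an alternative via stability of $\gamma$-factors under highly ramified twists and property (vi) is a different idea not used in the paper, but you have not developed it enough to assess.
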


\begin{proof}
We shall only give a sketch proof.
As before, one is reduced to the case when $\pi$ is a non-generic supercuspidal representation and
$\sigma$ is a supercuspidal representation of $\GL_r$.  In this case, $\pi = \theta(\tau^D_1 \boxtimes \tau^D_2)$ for a representation $\tau^D_1 \boxtimes \tau^D_2$ of $\GSO(D)$ (where $D$ is the quaternion division algebra over $F$).
\vskip 5pt

Now as in the proof of Theorem \ref{T:localfac}, let $\mathbb{F}$ be a totally imaginary number field with
$\mathbb{F}_{v_0} = F$.  Let $v_1$ be another finite place of $\mathbb{F}$ and let
$\mathbb{D}$ be the quaternion algebra over $\mathbb{F}$ ramified at precisely
$v_0$ and $v_1$.  Pick an irreducible representation $\tau$ of $\mathbb{D}^{\times}(\mathbb{F}_{v_1})$ of dimension $> 1$, so that $\tau \boxtimes \tau$ is a representation of $\GSO(\mathbb{D})(\mathbb{F}_{v_1})$. One can then find a cuspidal representation $\Pi^{\mathbb{D}}$ of $\GSO(\mathbb{D})(\A_{\mathbb{F}})$ whose local components at $v_0$ and $v_1$ are $\tau^D_1 \boxtimes \tau^D_2$ and $\tau \boxtimes \tau$ respectively. Moreover, by the Jacquet-Langlands correspondence, one knows that $JL(\Pi^{\mathbb{D}})$ is cuspidal and hence the local components of $\Pi^{\mathbb{D}}$ are generic at all places outside of $\{ v_0, v_1\}$.
\vskip 5pt

One can show that the global theta lift $\Theta(\Pi^{\mathbb{D}})$ of $\Pi^{\mathbb{D}}$ to $\GSp_4(\A_{\mathbb{F}})$ is nonzero, irreducible and cuspidal (cf. \cite{GT3}).  Moreover, the local component of $\Theta(\Pi^{\mathbb{D}})$ is $\pi$ at $v_0$, non-supercuspidal at $v_1$ and generic at all other places. Thus,
one knows the desired equalities of local factors at every place except $v_0$,
and the same argument as in the proof of the Theorem \ref{T:localfac} gives the desired result at $v_0$.
\end{proof}
\vskip10pt

One can also consider the standard $L$-function of $\GSp_4 \times \GL_r$.
By the same argument as above, one has the following theorem; we omit the details.

\begin{Thm}
Let $\pi$ be a generic representation or a non-supercuspidal representation of $\GSp_4(F)$ and $\sigma$ a representation of
$\GL_r(F)$.  Suppose that $\phi_{\pi}$ and $\phi_{\sigma}$ are the $L$-parameters of $\pi$ and $\sigma$ respectively. Then
\[ \begin{cases}
 L(s, \pi \times \sigma, std \boxtimes std) = L(s, std \, \phi_{\pi} \otimes \phi_{\sigma}), \\
\epsilon(s, \pi \times \sigma, std \boxtimes std , \psi) = \epsilon(s, std \, \phi_{\pi} \otimes \phi_{\sigma}, \psi), \\
\gamma(s, \pi \times \sigma, std \boxtimes std , \psi) = \gamma(s, std \, \phi_{\pi} \otimes \phi_{\sigma}, \psi).
\end{cases} \]
If one has a theory of these standard $\gamma$-factors for all irreducible representations of $\GSp_4(F) \times \GL_4(F)$, then the above identities hold for all irreducible representations.
\end{Thm}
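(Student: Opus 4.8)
The plan is to follow the proof of Theorem \ref{T:localfac} closely, replacing the spin $L$-function by the standard one and the Rankin--Selberg identity for $\GL_4 \times \GL_r$ by a twisted-exterior-square identity. By the multiplicativity of the standard $\gamma$-factors and the explicit description of the local theta correspondence in \cite{GT4}, one reduces at once to the case in which $\pi$ is a generic supercuspidal representation of $\GSp_4(F)$ and $\sigma$ is a supercuspidal representation of $\GL_r(F)$. Exactly as in Theorem \ref{T:localfac}, one separates the case in which the theta lift of $\pi$ to $\GSO_{2,2}(F)$ is zero (so that its lift to $\GSO(V)=\GSO_{3,3}(F)$ is supercuspidal) from the case in which it is nonzero; in the latter case $\pi = \theta(\tau_1 \boxtimes \tau_2)$ for supercuspidal $\tau_i$ of $\GL_2(F)$, and one globalizes from $\GSO_{2,2}$ rather than from $\GSp_4$, as in the last paragraph of the proof of Theorem \ref{T:localfac}.

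In the first case, pick a totally imaginary field $\mathbb{F}$ with $\mathbb{F}_{v_0}=F$, a globally generic cuspidal $\Pi$ of $\GSp_4(\A_{\mathbb{F}})$ with $\Pi_{v_0}\cong\pi$ and unramified outside $v_0$, and a cuspidal $\Sigma$ of $\GL_r(\A_{\mathbb{F}})$ with $\Sigma_{v_0}\cong\sigma$ and unramified outside $v_0$; then $\Theta(\Pi)$ is a nonzero cuspidal representation of $\GSO(V)(\A_{\mathbb{F}})$, which we write as $\Pi'\boxtimes\mu$ on $\GL_4\times\GL_1$. The input specific to the standard $L$-function is the identity $std\circ\iota = \iota_0\circ std$ coming from the commutative diagram of \S\ref{S:certain}: as a representation of $\GSp_4(\C)$ one has $std\circ\iota \cong std\oplus\mathbf{1}$, i.e. $\wedge^2\phi_{\pi}\otimes(\simi\phi_{\pi})^{-1}\cong (std\,\phi_{\pi})\oplus\mathbf{1}$. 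Combined with Lemma \ref{L:4.1} in its evident $\GL_r$-twisted form and with Henniart's Theorem \ref{T:henniart}, this yields, at every finite $v\ne v_0$ (by Proposition \ref{P:unram}) and at the archimedean places (by Adams--Barbasch \cite{AB} over $\C$, as in Theorem \ref{T:localfac}),
\[ \gamma(s, \Theta(\Pi_v)\times\Sigma_v, std\boxtimes std,\psi_v) = \gamma(s, std\,\phi_{\Pi_v}\otimes\phi_{\Sigma_v},\psi_v)\cdot\gamma(s,\phi_{\Sigma_v},\psi_v), \]
together with the analogous identities for $L$- and $\epsilon$-factors, where the first factor on the right is the Artin factor of the $5r$-dimensional representation $std\,\phi_{\Pi_v}\otimes\phi_{\Sigma_v}$ and the second is the standard $\GL_r$-factor accounting for the trivial summand. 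One now applies Shahidi's global functional equation \cite[Thm. 3.5]{Sh} to $L^S(s,\Pi\times\Sigma,std\boxtimes std)$, to $L^S(s,\Theta(\Pi)\times\Sigma,std\boxtimes std)$ and to $L^S(s,\Sigma)$ (the last identified with its Artin counterpart by local Langlands for $\GL_r$), cancels all local factors at places $v\ne v_0$, and extracts the equality of $\gamma$-factors at $v_0$. The equality of $L$-factors then follows, as in \cite[Cor. 5.1]{MS}, from that of the $\gamma$-factors together with the holomorphy in $\operatorname{Re}(s)>0$ of the standard $L$-functions of discrete series representations, and the equality of $\epsilon$-factors is formal.

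The case in which $\pi$ lifts to $\GSO_{2,2}$ is handled in the same way after globalizing $\tau_1\boxtimes\tau_2$, using that the restriction of the standard representation of $\GSp_4(\C)$ to the endoscopic dual group $(\GL_2(\C)\times\GL_2(\C))^0$ is $\mathbf{1}\oplus(std\boxtimes std)\otimes\simi^{-1}$ (tensor product of the two standard $2$-dimensional representations), so that the required $\gamma$-factor identity reduces to Rankin--Selberg theory for $\GL_2\times\GL_2\times\GL_r$ and for $\GL_1\times\GL_r$. For the conditional statement, given a theory of standard $\gamma$-factors for all irreducible representations one reduces as before to a non-generic supercuspidal $\pi=\theta(\tau^D_1\boxtimes\tau^D_2)$ with $D$ the division quaternion algebra and globalizes from $\GSO(\mathbb{D})$ exactly as in the proposition following Theorem \ref{T:localfac}. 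The main obstacle is not conceptual but one of bookkeeping: one must correctly track the spurious degree-$r$ factor $L(s,\sigma)$ introduced by the trivial summand in $std\circ\iota$ (and its endoscopic analogue), and one must check that Shahidi's standard $L$-function of $\GSO(V)\times\GL_r$ coincides, under the isomorphism $\GSO(V)\cong(\GL_4\times\GL_1)/\!\sim$, with the twisted-exterior-square-times-Rankin--Selberg $L$-function on the $\GL_4\times\GL_r$ side --- a $\GL_r$-twisted refinement of Lemma \ref{L:4.1}, again verified via the characterization of Shahidi's local factors in \cite[Thm. 3.5]{Sh}.
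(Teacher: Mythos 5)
Your proposal is correct and reconstructs precisely the argument the paper suppresses under ``by the same argument as above.'' The two computations specific to the standard $L$-function are both identified: the decomposition
$std_{\GSO(V)}\circ\iota \cong std_{\GSp_4}\oplus\mathbf{1}$ as representations of $\GSp_4(\C)$,
which is exactly the content of the commutative diagram in \S\ref{S:certain} and introduces the spurious degree-$r$ factor $\gamma(s,\phi_{\Sigma_v},\psi_v)$; and, for the endoscopic case, the restriction $std\bigl|_{(\GL_2(\C)\times\GL_2(\C))^0}\cong\mathbf{1}\oplus(std\boxtimes std)\otimes\simi^{-1}$, which is what lets one reduce to $\GL_2\times\GL_2\times\GL_r$ Rankin--Selberg theory. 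The reduction by multiplicativity to generic supercuspidal $\pi\boxtimes\sigma$, the globalization over a totally imaginary field, the use of Proposition~\ref{P:unram}, Adams--Barbasch at complex places, and Shahidi's functional equation \cite[Thm.~3.5]{Sh} all match the proof of Theorem~\ref{T:localfac}; the conditional statement is handled exactly as in the Proposition following that theorem.

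One remark worth making explicit: at the distinguished place $v_0$ the global functional equations give the equality of Shahidi $\gamma$-factors
$\gamma(s,\Pi_{v_0}\times\Sigma_{v_0}, std\boxtimes std,\psi_{v_0})\cdot\gamma(s,\Sigma_{v_0},\psi_{v_0}) = \gamma(s,\Theta(\Pi_{v_0})\times\Sigma_{v_0}, std\boxtimes std,\psi_{v_0})$,
and to convert the right-hand side into the Artin factor $\gamma(s,(\wedge^2\phi_{\Pi'}\otimes\mu^{-1})\otimes\phi_\sigma,\psi_{v_0})$ one needs exactly the $\GL_r$-twisted refinement of Lemma~\ref{L:4.1} / Henniart that you flag in your last paragraph. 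This is not merely bookkeeping: for $r=1$ it is Henniart's theorem (Theorem~\ref{T:henniart}, i.e. \cite{He2}), while for $r>1$ it requires a separate argument (for instance a global comparison using Kim's exterior square functoriality together with the $\GL_6\times\GL_r$ Rankin--Selberg theory, or a direct application of the characterization in \cite[Thm.~3.5]{Sh}). The paper elides this point entirely, so your explicit acknowledgment of it is the more careful presentation; you should simply upgrade ``one of bookkeeping'' to a genuine lemma and supply a one-line justification of the form just indicated.
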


\vskip 15pt

\section{\bf Conservation of Plancherel Measure} \label{S:Plan}

In this section, we prove (vi) of the Main Theorem, which expresses the Plancherel measure $\mu(s, \pi \times \sigma, \psi)$ in terms of the product of various gamma factors of the corresponding $L$-parameters.
Let us briefly recall the definition of the relevant Plancherel measure.
\vskip 5pt

Let $\pi$ be an irreducible representation of $\GSp_4(F)$ and $\sigma$ a representation of $\GL_r(F)$, so that $\pi \boxtimes \sigma$ is a representation of
\[  M_r(F): =\GSp_4(F) \times \GL_r(F) \cong \GSpin_5(F) \times \GL_r(F). \]
Now $M_r$ is the Levi factor of a maximal parabolic subgroup $P_r = M_r \cdot N_r$ of
$G_r = \GSpin_{2r+5}$.  One can thus form the generalized principal series representation
\[  I_{P_r}(s, \pi \boxtimes \sigma) = \Ind_{P_r}^{G_r} \delta_{P_r}^{1/2} \cdot  \pi \boxtimes
\sigma |\det|^s, \]
where $\det$ is the determinant character of $\GL_r(F)$.
If $\bar{P}_r = M_r \cdot \bar{N}_r$ is the opposite parabolic, then we similarly have the induced representation $I_{\bar{P}_r}(s, \pi \boxtimes \sigma)$. The additive character $\psi$ determines a Haar measure on $N_r$, which induces a dual measure on $\bar{N}_r$. Then there is a standard intertwining operator
\[  A_{\psi}(s, \pi \boxtimes \sigma, N_r, \bar{N}_r) : I_{P_r}(s, \pi \boxtimes \sigma) \longrightarrow I_{\bar{P}_r}(s, \pi \boxtimes \sigma). \]
Then the composite
$A_{\psi}(s, \pi \boxtimes \sigma, \bar{N}_r, N_r) \circ A_{\psi}(s, \pi \boxtimes \sigma, N_r , \bar{N}_r)$
 is a scalar operator on $I_{P_r}(s, \pi \boxtimes \sigma)$ and the Plancherel measure is the scalar-valued meromorphic function defined by
\[  \mu(s, \pi \boxtimes \sigma,\psi)^{-1} = A_{\psi}(s, \pi \boxtimes \sigma, \bar{N}_r, N_r) \circ A_{\psi}(s, \pi \boxtimes \sigma, N_r , \bar{N}_r). \]
When $\pi$ and $\sigma$ are both supercuspidal, the analytic properties of $\mu(s, \pi \times \sigma,\psi)$ determine the points of reducibility of the principal series $I_{P_r}(s, \pi \boxtimes \sigma)$. More precisely, we have [Si, \S 5.3-5.4]:
\vskip 5pt

\begin{Prop}
Suppose that $\pi \boxtimes \sigma$ is a unitary supercuspidal representation of
$\GSp_4(F) \times \GL_r(F)$.
\vskip 5pt
(i) On the imaginary axis $i \mathbb{R}$, $\mu(s, \pi \boxtimes \sigma,\psi)$ is holomorphic and $\geq 0$.
\vskip 5pt

(ii) If $\sigma^{\vee} \ne \sigma \otimes \omega_{\pi}$, then $\mu(0, \pi \boxtimes \sigma,\psi)  \ne 0$ and $I_{P_r}(s,\pi \boxtimes \sigma)$ is irreducible for all $s \in \mathbb{R}$.
\vskip 5pt

(iii) If $\sigma^{\vee} = \sigma \otimes \omega_{\pi}$, then there is a unique real $s_0\geq 0$ such that $I_{P_r}(s_0, \pi \boxtimes \sigma)$ is reducible. Moreover, $s_0 > 0$ if and only if $\mu(0, \pi \boxtimes \sigma,\psi) =  0$, in which case $s_0$ is the unique pole of $\mu(s, \pi \boxtimes \sigma,\psi)$ on the positive real axis.
\end{Prop}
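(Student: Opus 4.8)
The plan is to deduce all three assertions from Harish-Chandra's theory of the $\mu$-function together with Silberger's analysis of the relationship between the $\mu$-function and reducibility of representations parabolically induced from supercuspidal data, as in [Si, \S 5.3--5.4]. In our situation $G_r = \GSpin_{2r+5}$ carries a maximal parabolic $P_r = M_r \cdot N_r$ whose Levi factor $M_r \cong \GSp_4(F) \times \GL_r(F)$ supports the supercuspidal representation $\pi \boxtimes \sigma$, so the hypotheses of Silberger's theory are in force and what remains is essentially the specialization of his general results to this case.

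First I would record the analytic input on the standard intertwining operators $A_\psi(s, \pi \boxtimes \sigma, N_r, \bar{N}_r)$: because $\pi \boxtimes \sigma$ is supercuspidal they are rational in $q^{-s}$, they have no poles on the unitary axis $i\mathbb{R}$, and there, with the self-dual normalization of measures attached to $\psi$, the operator $A_\psi(s, \cdot, \bar{N}_r, N_r)$ is the adjoint of $A_\psi(s, \cdot, N_r, \bar{N}_r)$. Consequently $\mu(s, \pi \boxtimes \sigma, \psi)^{-1}$ is a nonnegative scalar on $i\mathbb{R}$, and the absence of poles of the intertwining operators forces $\mu(s, \pi \boxtimes \sigma, \psi)$ itself to be holomorphic there. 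This gives (i).

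For (ii) and (iii) the key point is the nontrivial element $w \in W(G_r, M_r)$, which has order two and acts on cuspidal supports by $\pi \boxtimes \sigma \mapsto \pi \boxtimes (\sigma^\vee \otimes \omega_\pi^{-1})$. Reducibility of $I_{P_r}(s_0, \pi \boxtimes \sigma)$ at a point $s_0$ can occur only if $w$ fixes the cuspidal support of $\pi \boxtimes (\sigma |\det|^{s_0})$, and this forces both $\sigma^\vee \cong \sigma \otimes \omega_\pi$ and $s_0 \in \mathbb{R}$. Hence, if $\sigma^\vee \ne \sigma \otimes \omega_\pi$, then $I_{P_r}(s, \pi \boxtimes \sigma)$ is irreducible for every $s \in \mathbb{R}$; in particular $\mu(0, \pi \boxtimes \sigma, \psi) \ne 0$, since a zero of $\mu$ at $s = 0$ would force a pole of a normalized intertwining operator and hence reducibility at $s = 0$. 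This is (ii). If instead $\sigma^\vee \cong \sigma \otimes \omega_\pi$, then Silberger's dichotomy applies: either $\mu(0, \pi \boxtimes \sigma, \psi) \ne 0$, in which case the normalized intertwining operator at $s = 0$ is, up to a nonzero scalar, an involution with a genuine $\pm$-eigenspace decomposition, so the reducibility point is $s_0 = 0$; or $\mu(0, \pi \boxtimes \sigma, \psi) = 0$, in which case the reducibility point $s_0$ is strictly positive and, by Harish-Chandra's expression of $\mu$ in terms of the poles and zeros of the two intertwining operators, coincides with the unique pole of $\mu(s, \pi \boxtimes \sigma, \psi)$ on the positive real axis. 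Uniqueness of $s_0$ in both cases follows from the fact that $\mu$ is a ratio of factors of the shape $1 - q^{\pm(s - a)}$, which under the self-duality constraint on $\pi \boxtimes \sigma$ admits at most one reducibility point on $\mathbb{R}_{\geq 0}$.

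The main obstacle, or rather the only genuine work beyond invoking [Si], is the verification that the triple $(G_r, M_r, \pi \boxtimes \sigma)$ satisfies the running hypotheses of Silberger's theory: the rank-one reduction, the precise description of the $w$-action on $\GSpin_5(F) \times \GL_r(F) \cong \GSp_4(F) \times \GL_r(F)$ (in particular the appearance of the twist by $\omega_\pi$), and the compatibility of the $\psi$-normalized intertwining operators with the self-dual measure convention. Once this bookkeeping is in place, parts (i)--(iii) are immediate consequences of the general theory.
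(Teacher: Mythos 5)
Your plan is correct and matches the paper's approach exactly: the paper does not give a proof at all, but simply states the proposition as a recall of standard facts from Silberger, with the citation [Si, \S 5.3--5.4] placed directly before the statement. Your outline (unitarity and adjoint relations on the imaginary axis for (i), the Weyl group action $\sigma \mapsto \sigma^{\vee} \otimes \omega_{\pi}^{-1}$ and Harish-Chandra's commuting algebra theorem for (ii), and Silberger's dichotomy at $s=0$ for (iii)) is precisely what one would extract from those sections.
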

\vskip 10pt

When $\pi \boxtimes \sigma$ is a generic (not necessarily supercuspidal) representation,  Shahidi showed that the meromorphic function $\mu(s, \pi \boxtimes\sigma,\psi)$ can be expressed as a product of gamma factors.  As a result, he was able to determine with great precision the unique point of reducibility in (iii) of the above proposition when $\mu(0, \pi \boxtimes \sigma,\psi)=0$.
Let us recall his results for the case at hand.
\vskip 5pt

The parabolic subgroup $P_r \subset G_r$  gives rise to a dual parabolic subgroup $P_r^{\vee} = M_r^{\vee} \cdot N_r^{\vee}$ in the dual group $G_r^{\vee} = \GSp_{2r+4}(\C)$ and we have:
\[  M_r^{\vee} = \GSp_4(\C) \times \GL_{r}(\C). \]
Under the adjoint action of $M_r^{\vee}$, the Lie algebra $\mathfrak{n}_r^{\vee}$ of the unipotent radical $N_r^{\vee}$ decomposes as $\mathfrak{n}_r^{\vee} = r_1 \oplus r_2$ with
\[
r_1 = std^{\vee} \boxtimes std \quad \text{and} \quad
r_2 = \simi^{-1} \otimes  Sym^2 \]
where $std$ denotes the relevant standard representation and $\simi$ is the similitude character of
$\GSp_4(\C)$. On the opposite nilpotent radical $\bar{\mathfrak{n}}_r^{\vee}$, the adjoint action of $M_r^{\vee}$ is the dual representation $r_1^{\vee} \oplus r_2^{\vee}$.
Now  we have:
\vskip 5pt

\begin{Prop}
Suppose that $\pi \boxtimes \sigma$ is a generic representation of $M_r(F) = \GSp_4(F) \times \GL_r(F)$.
Then
\[  \mu(s, \pi \boxtimes \sigma,\psi) = \gamma(s, \pi \boxtimes \sigma, r_1,\psi) \cdot \gamma(s, \pi \boxtimes \sigma, r_1^{\vee}, \overline{\psi}) \cdot \gamma(2s, \pi \boxtimes \sigma, r_2 ,\psi) \cdot
 \gamma(2s, \pi \boxtimes \sigma, r_2^{\vee} ,\overline{\psi}), \]
which is in turn equal to
\[  \gamma(s, \phi_{\pi}^{\vee} \otimes \phi_{\sigma}, \psi) \cdot \gamma(-s, \phi_{\pi} \otimes \phi_{\sigma}^{\vee},\overline{\psi}) \cdot \gamma(2s, Sym^2 \phi_{\sigma} \otimes \simi \phi_{\pi}^{-1} ,\psi) \cdot
 \gamma(-2s, Sym^2 \phi_{\sigma}^{\vee} \otimes \simi \phi_{\pi} ,\overline{\psi})
 \]
\end{Prop}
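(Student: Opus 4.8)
The plan is to establish the two displayed equalities in turn. The first is an instance of the Langlands--Shahidi formula for Plancherel measures. I would begin by pinning down the ambient situation: $M_r = \GSpin_5 \times \GL_r \cong \GSp_4 \times \GL_r$ is the Levi factor of the maximal parabolic $P_r = M_r N_r$ of $G_r = \GSpin_{2r+5}$, whose dual group is $\GSp_{2r+4}(\C)$ with corresponding Levi $M_r^{\vee} = \GSp_4(\C) \times \GL_r(\C)$; a direct computation with the symplectic space $W = W_r \oplus W_4 \oplus W_r^{*}$ underlying $\GSp_{2r+4}(\C)$, in which $P_r^{\vee}$ stabilizes the isotropic subspace $W_r$, shows that the adjoint action of $M_r^{\vee}$ on $\mathfrak{n}_r^{\vee}$ decomposes as $r_1 \oplus r_2$ with $r_1 = std^{\vee} \boxtimes std$ and $r_2 = \simi^{-1} \otimes Sym^2 std$, as recorded before the statement. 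Shahidi's theorem \cite{Sh} then expresses $\mu(s, \pi \boxtimes \sigma, \psi)$ as the product of the four $\gamma$-factors $\gamma(s, \pi \boxtimes \sigma, r_1, \psi)$, $\gamma(s, \pi \boxtimes \sigma, r_1^{\vee}, \overline{\psi})$, $\gamma(2s, \pi \boxtimes \sigma, r_2, \psi)$ and $\gamma(2s, \pi \boxtimes \sigma, r_2^{\vee}, \overline{\psi})$; this is legitimate precisely because $\pi \boxtimes \sigma$ is generic, which is the running hypothesis.

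For the second equality I would match the four Shahidi factors with the corresponding Artin-type factors one at a time. Since the four-dimensional standard representation of $\GSp_4(\C)$ is the Spin representation in the terminology of Section~\ref{S:certain}, the factor $\gamma(s, \pi \boxtimes \sigma, r_1, \psi)$ is, by the way Shahidi's factors were set up in row (a) of Table~\ref{table} (applied with $\pi$ in place of $\pi^{\vee}$), exactly $\gamma(s, \pi^{\vee} \times \sigma, \psi)$, the $spin \boxtimes std$ $\gamma$-factor with $\pi$ replaced by its contragredient. As $\pi^{\vee}$ is again generic, Theorem~\ref{T:localfac} applies and gives $\gamma(s, \pi^{\vee} \times \sigma, \psi) = \gamma(s, \phi_{\pi}^{\vee} \otimes \phi_{\sigma}, \psi)$; the factor attached to $r_1^{\vee}$ is treated the same way and yields $\gamma(-s, \phi_{\pi} \otimes \phi_{\sigma}^{\vee}, \overline{\psi})$, the sign of the argument and the passage to $\overline{\psi}$ being forced by the contragredient and the shift conventions built into Shahidi's Plancherel formula. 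For the factor attached to $r_2$, I would use part (iii) of the Main Theorem to identify $\omega_{\pi}$ with $\simi \phi_{\pi}$ and thereby recognize $\gamma(2s, \pi \boxtimes \sigma, r_2, \psi)$ as the twisted symmetric square $\gamma$-factor $\gamma(2s, \sigma, Sym^2 \otimes \omega_{\pi}^{-1}, \psi)$ of $\sigma$ from row (b) of Table~\ref{table}; to transfer this to the Galois side I would invoke that the local Langlands correspondence for $\GL_r$ respects the twisted symmetric square $\gamma$-factor, which follows from Henniart's theorem on the twisted exterior square (Theorem~\ref{T:henniart} and \cite{He2}) together with the decomposition $\phi_{\sigma} \otimes \phi_{\sigma} = Sym^2 \phi_{\sigma} \oplus \bigwedge^2 \phi_{\sigma}$, its representation-theoretic counterpart for $\sigma \times (\sigma \otimes \chi)$, and the compatibility of the Rankin--Selberg $\gamma$-factor with LLC. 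This gives $\gamma(2s, Sym^2 \phi_{\sigma} \otimes \simi \phi_{\pi}^{-1}, \psi)$, and the factor attached to $r_2^{\vee}$ similarly produces $\gamma(-2s, Sym^2 \phi_{\sigma}^{\vee} \otimes \simi \phi_{\pi}, \overline{\psi})$; assembling the four identifications yields the second equality.

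I expect the main obstacle to be entirely one of bookkeeping rather than of substance: matching Shahidi's abstractly defined $r_i$-factors --- products of the Langlands--Shahidi machinery carried out on the $\GSpin$ tower --- with the concretely defined $spin \boxtimes std$ and twisted symmetric square factors, while keeping every similitude twist, every instance of the exceptional isomorphisms $\GSpin_5 \cong \GSp_4$ and $\GSpin_{2r+5}^{\vee} \cong \GSp_{2r+4}(\C)$, and every $s \leftrightarrow -s$, $\psi \leftrightarrow \overline{\psi}$ and contragredient convention consistent, and verifying that the twisted symmetric square transfer for $\GL_r$ is indeed available at the level of $\gamma$-, $L$- and $\epsilon$-factors. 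None of this is deep, but only a careful normalization audit will confirm that all four terms line up with the stated formula.
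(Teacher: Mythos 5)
Your account of the first equality and of the $r_1$ factors is on the same track as the paper: Shahidi's theorem gives the Plancherel formula, and Theorem~\ref{T:localfac} handles $\gamma(s, \pi \boxtimes \sigma, r_1, \psi)$ and its dual. But your treatment of the $r_2$ factors contains a genuine gap, and it is precisely the delicate point that the paper's proof goes out of its way to address.

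You assert that ``the local Langlands correspondence for $\GL_r$ respects the twisted symmetric square $\gamma$-factor,'' and propose to deduce this from Theorem~\ref{T:henniart} and the decomposition $\phi_\sigma \otimes \phi_\sigma = \mathrm{Sym}^2 \phi_\sigma \oplus \bigwedge^2 \phi_\sigma$. Neither step is available. Theorem~\ref{T:henniart} as stated is a statement about $L$-functions only, not about $\gamma$- or $\epsilon$-factors, so it does not give you exact $\gamma$-compatibility for $\bigwedge^2$ to feed into the product decomposition. More fundamentally, Henniart's result in \cite{He2} on the twisted symmetric (and exterior) square $\epsilon$-factors is known only \emph{up to multiplication by a root of unity} $\alpha$. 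The product $\gamma(\sigma \times (\sigma \otimes \chi)) = \gamma(\sigma, \mathrm{Sym}^2 \otimes \chi)\cdot\gamma(\sigma, \bigwedge^2 \otimes \chi)$ therefore lets you conclude only that the two factors of $\alpha$ are inverse to each other, not that either is $1$, so exact term-by-term compatibility does not follow. The paper's proof resolves this not by claiming exact compatibility, but by observing that in the Plancherel measure the two $r_2$-type factors appear in the combination
\[
\gamma(2s, \pi \boxtimes \sigma, r_2, \psi) \cdot \gamma(-2s, \pi \boxtimes \sigma, r_2^{\vee}, \overline{\psi})
= \frac{\gamma(2s, \pi \boxtimes \sigma, r_2, \psi)}{\gamma(1+2s, \pi \boxtimes \sigma, r_2, \psi)},
\]
and that the ambiguous root of unity $\alpha$ cancels in this ratio. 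Your argument is missing this cancellation step, and the route you propose to avoid it does not close the gap.
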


\begin{proof}
The first equality is a result of Shahidi \cite[Thm. 3.5]{Sh}. The second equality follows
from Thm. \ref{T:localfac} and  a result of Henniart \cite[Thm. 1.4]{He2}.
Indeed, Henniart showed that the local Langlands correspondence for $GL_r$ respect the twisted symmetric square epsilon-factors up to multiplication by a root of unity $\alpha$. Hence
\[
 \gamma(2s, \pi \boxtimes \sigma, r_2 ,\psi) \cdot
 \gamma(-2s, \pi \boxtimes \sigma, r_2^{\vee} ,\overline{\psi})
 = \frac{  \gamma(2s, \pi \boxtimes \sigma, r_2 ,\psi)}{ \gamma(1+2s, \pi \boxtimes \sigma, r_2 ,\psi)}
 =\frac{\alpha \cdot \gamma(2s, Sym^2 \phi_{\sigma} \otimes \simi \phi_{\pi}^{-1} ,\psi)}{\alpha \cdot \gamma(1+ 2s, Sym^2 \phi_{\sigma} \otimes \simi \phi_{\pi}^{-1} ,\psi)}   \]
 which is in turn equal to
\[    \gamma(2s, Sym^2 \phi_{\sigma} \otimes \simi \phi_{\pi}^{-1} ,\psi) \cdot
 \gamma(-2s, Sym^2 \phi_{\sigma}^{\vee} \otimes \simi \phi_{\pi} ,\overline{\psi}). \]
 This explains why the root of unity $\alpha$ disappears. We thank Henniart for explaining this point to us.
 \end{proof}
\vskip 10pt

After these preliminaries, the main result of this section is:
\vskip 5pt

\begin{Thm} \label{T:Plan}
Let $\{ \pi, \pi'\}$ be an $L$-packet of $\GSp_4(F)$ such that $\pi'$ is non-generic supercuspidal
and $\pi$ is a generic discrete series representation. Then for any supercuspidal
representation $\sigma$ of $\GL_r(F)$,
\[  \mu(s, \pi \boxtimes \sigma,\psi) = \mu(s, \pi' \boxtimes \sigma,\psi). \]
In particular, if $\pi$ is also supercuspidal, then $I_{P_r}(s, \pi \boxtimes \sigma)$ is reducible if and only if $I_{P_r}(s, \pi' \boxtimes\sigma)$ is reducible.
\end{Thm}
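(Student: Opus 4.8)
The plan is to prove the identity by a global argument that compares $\pi'$ with the generic discrete series member $\pi$ of the same $L$-packet, in the spirit of the proof of Theorem~\ref{T:localfac} and of the Proposition following it. First I would record the structure of the packet: by Theorem~\ref{T:summary}(i) one has $\pi' = \theta(\tau_1^D \boxtimes \tau_2^D)$ for a supercuspidal $\tau_1^D \boxtimes \tau_2^D$ of $\GSO(D) = \GSO_{4,0}(F)$ with $\tau_1^D \ne \tau_2^D$; by Lemma~\ref{L:endoscopic} the common parameter is the endoscopic $\phi = \phi_{\tau_1^D} \oplus \phi_{\tau_2^D}$; and $\pi$ is the theta lift to $\GSp_4$ of the representation $\tau_1 \boxtimes \tau_2$ of $\GSO_{2,2}(F)$ with $\phi_{\tau_i} = \phi_{\tau_i^D}$. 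Then, exactly as in the proof of the Proposition following Theorem~\ref{T:localfac}, I would globalize: fix a totally imaginary field $\mathbb{F}$ with $\mathbb{F}_{v_0} = F$, a second finite place $v_1$, and a quaternion algebra $\mathbb{D}/\mathbb{F}$ ramified precisely at $\{v_0,v_1\}$; choose a cuspidal $\Pi^{\mathbb{D}}$ of $\GSO(\mathbb{D})(\A_{\mathbb{F}})$ with $\Pi^{\mathbb{D}}_{v_0} = \tau_1^D \boxtimes \tau_2^D$, with $\Pi^{\mathbb{D}}_{v_1} = \tau \boxtimes \tau$ for an irreducible $\tau$ of $\mathbb{D}^{\times}(\mathbb{F}_{v_1})$ of dimension $>1$, and unramified elsewhere; and choose a cuspidal $\Sigma$ of $\GL_r(\A_{\mathbb{F}})$ with $\Sigma_{v_0} = \sigma$ and unramified elsewhere. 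Writing $\Pi = JL(\Pi^{\mathbb{D}})$, a globally generic cuspidal representation of $\GSO_{2,2}(\A_{\mathbb{F}})$, one has $\Pi^{\mathbb{D}}_v = \Pi_v$ for all $v \notin \{v_0,v_1\}$, and $\Pi^{\mathbb{D}}$ is generic at all such $v$.

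Next I would form the two global theta lifts $\Theta(\Pi^{\mathbb{D}})$ and $\Theta(\Pi)$ to $\GSp_4(\A_{\mathbb{F}})$. By \cite{GT3}, \cite{GT4} and the nonvanishing of global theta lifts of globally generic cuspidal representations, both are nonzero and cuspidal, and $\Theta(\Pi)$ is globally generic. Local--global compatibility of the theta correspondence together with our definition of $L$ identifies their local components: away from $\{v_0,v_1\}$ they agree; at $v_0$ they are $\pi'$ and $\pi$ respectively; and at $v_1$, by Theorem~\ref{T:summary} and the explicit local theta correspondence of \cite{GT4}, they are both non-discrete-series (hence non-supercuspidal) representations of $\GSp_4(\mathbb{F}_{v_1})$ lying in the same local $L$-packet. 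Thus $\Theta(\Pi^{\mathbb{D}}) \boxtimes \Sigma$ and $\Theta(\Pi) \boxtimes \Sigma$ are two cuspidal automorphic representations of the Levi $\GSp_4(\A_{\mathbb{F}}) \times \GL_r(\A_{\mathbb{F}})$ of the maximal parabolic $P_r \subset \GSpin_{2r+5}$ which coincide at every place except $v_0$ and $v_1$.

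I would then invoke the global product formula for Plancherel measures: the global intertwining operator factors as the Euler product of the local unnormalized ones and satisfies $M(-s)\circ M(s) = \operatorname{id}$, so for any cuspidal automorphic representation of the Levi one has $\prod_v \mu_v(s,\,\cdot\,,\psi_v) = 1$, understood via the meromorphic continuation of the Euler product over the unramified places. Applying this to $\Theta(\Pi^{\mathbb{D}}) \boxtimes \Sigma$ and to $\Theta(\Pi) \boxtimes \Sigma$ and dividing, all local factors away from $\{v_0,v_1\}$ cancel, and the $v_1$-factors cancel because there the two representations are non-supercuspidal with the same $L$-parameter and $\Sigma_{v_1}$ is unramified, so their local Plancherel measures agree by the previous Proposition (Shahidi's formula), extended to non-supercuspidal representations by multiplicativity along the Langlands classification, using Theorem~\ref{T:localfac} for the factor attached to $r_1$ and Henniart \cite{He2} for the twisted symmetric square factor attached to $r_2$. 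What is left is precisely the $v_0$-identity $\mu(s,\pi'\boxtimes\sigma,\psi) = \mu(s,\pi\boxtimes\sigma,\psi)$. The last clause then follows at once from the Proposition relating the analytic behaviour of $\mu(s,\pi\boxtimes\sigma,\psi)$ to the reducibility of $I_{P_r}(s,\pi\boxtimes\sigma)$ when $\pi\boxtimes\sigma$ is a twist of a unitary supercuspidal representation.

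I expect the main obstacle to be the global input of the first two paragraphs rather than the product formula itself: one must produce the cuspidal $\Pi^{\mathbb{D}}$ on the form of $\GSO_4$ that is anisotropic at $\{v_0,v_1\}$ with the prescribed supercuspidal component at $v_0$, verify that both global theta lifts to $\GSp_4(\A_{\mathbb{F}})$ are nonzero and cuspidal with the stated local components, and --- the one point genuinely beyond the proof of Theorem~\ref{T:localfac} --- check that at $v_1$ the two lifts lie in the same local $L$-packet, for which one appeals to the explicit local theta correspondences of \cite{GT4} summarised in Theorem~\ref{T:summary}. Granting these, the Langlands--Shahidi functional equation for intertwining operators, together with Theorem~\ref{T:localfac} and \cite{He2}, supplies the rest.
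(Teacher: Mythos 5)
Your argument is valid but takes a genuinely different route from the paper's. Your globalization is modeled on the proof of the Proposition following Theorem~\ref{T:localfac}: you ramify $\mathbb{D}$ at $v_0$ (where the data of interest sits) and at an auxiliary place $v_1$ where you insert disposable data $\tau \boxtimes \tau$. After dividing the two global product formulas, you must show the $v_1$-factors cancel, i.e.\ that $\pi_{gen}(JL(\tau))$ and $\pi_{ng}(JL(\tau))$ have equal Plancherel measures against $\Sigma_{v_1}$. The paper instead chooses \emph{two} places $v_1, v_2$ with $\mathbb{F}_{v_1} = \mathbb{F}_{v_2} = F$, ramifies $\mathbb{D}$ at exactly $\{v_1,v_2\}$, and globalizes $\tau_1\boxtimes\tau_2$ on $\GSO_{2,2}(\A_{\mathbb{F}})$ with the \emph{same} local component at both places; after theta lifting, the two cuspidal representations of $\GSp_4(\A_{\mathbb{F}})$ then have components $\pi, \pi'$ at both $v_1$ and $v_2$ and agree everywhere else, so the product-formula comparison yields $\mu(s,\pi\boxtimes\sigma,\psi)^2 = \mu(s,\pi'\boxtimes\sigma,\psi)^2$, and positivity of Plancherel measures on $i\mathbb{R}$ extracts the square root.

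The two routes trade complementary difficulties. Your approach avoids the square root but needs to know that at $v_1$ the generic and non-generic members of the (non-discrete-series) local packet have the same Plancherel measure. This is true, but your justification via ``Shahidi's formula, extended to non-supercuspidal representations by multiplicativity along the Langlands classification, using Theorem~\ref{T:localfac} and Henniart'' is slightly miscalibrated: at $v_1$ both $\pi_{gen}(\tau)$ and $\pi_{ng}(\tau)$ are tempered (so each is its own standard module), Shahidi's gamma-factor identity only applies to the generic one, and Theorem~\ref{T:localfac} and \cite{He2} concern $L$- and $\gamma$-factors rather than the Plancherel measure of the non-generic constituent directly. The correct and simpler justification is that the Plancherel measure depends only on cuspidal support, since it is multiplicative under induction in stages via the compatibility of standard intertwining operators; $\pi_{gen}(\tau)$ and $\pi_{ng}(\tau)$ are constituents of the same induced representation $I_{Q(Z)}(1,\tau)$. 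With that fix your argument closes. The paper's two-place trick buys immunity from any discussion of Plancherel measures at auxiliary places, at the modest cost of the final positivity step.
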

\vskip 10pt

\begin{Cor}
Property (vi) of the Main Theorem holds.
\end{Cor}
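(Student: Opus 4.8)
The Corollary is formal given Theorem~\ref{T:Plan}: a non-generic supercuspidal $\pi$ lies, by the construction of \S\ref{S:packets}, in an $L$-packet $L_{\phi_\pi}=\{\pi_{\bf 1},\pi_{\bf sign}\}$ with $\phi_\pi$ an endoscopic discrete series parameter, $\pi_{\bf 1}$ a generic discrete series representation and $\pi=\pi_{\bf sign}$; Theorem~\ref{T:Plan} gives $\mu(s,\pi\boxtimes\sigma,\psi)=\mu(s,\pi_{\bf 1}\boxtimes\sigma,\psi)$ for supercuspidal $\sigma$ on $\GL_r(F)$, and since $\pi_{\bf 1}$ is generic the Proposition of Shahidi recalled above identifies the latter with the $\gamma$-factor product in (vi) (using $\phi_{\pi_{\bf 1}}=\phi_\pi$). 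So all the content is in Theorem~\ref{T:Plan}. Write $\pi'=\theta(\tau_1^D\boxtimes\tau_2^D)$ with $D$ the division quaternion algebra and $\tau_1^D\ne\tau_2^D$ irreducible representations of $D^\times$, so $\phi_{\pi'}=\phi_1\oplus\phi_2$ with $\phi_i$ the parameter of $\tau_i:=JL(\tau_i^D)$; then $\pi=\pi_{\bf 1}=\theta(\tau_1\boxtimes\tau_2)$ is the theta lift from $\GSO_{2,2}$, so $\phi_\pi=\phi_{\pi'}$.

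The plan is to globalize, in the spirit of the proofs of Theorem~\ref{T:localfac} and the Proposition following it. Pick a totally imaginary $\mathbb{F}$ with $\mathbb{F}_{v_0}=F$ and an auxiliary finite place $v_1$; let $\mathbb{D}/\mathbb{F}$ be ramified exactly at $\{v_0,v_1\}$, globalize $\tau_1^D\boxtimes\tau_2^D$ to a cuspidal $\Pi_1^{\mathbb{D}}\boxtimes\Pi_2^{\mathbb{D}}$ on $\GSO(\mathbb{D})(\A_{\mathbb{F}})$ which equals $\tau_1^D\boxtimes\tau_2^D$ at $v_0$, has \emph{equal} components at $v_1$ (so $\Pi_1^{\mathbb{D}}\ncong\Pi_2^{\mathbb{D}}$, as they differ at $v_0$) and is unramified elsewhere, and globalize $\sigma$ to a cuspidal $\Sigma$ on $\GL_r(\A_{\mathbb{F}})$ with $\Sigma_{v_0}=\sigma$. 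Exactly as in the proof of the Proposition after Theorem~\ref{T:localfac} (cf.\ \cite{GT3}), the global theta lift $\Pi:=\Theta(\Pi_1^{\mathbb{D}}\boxtimes\Pi_2^{\mathbb{D}})$ to $\GSp_4(\A_{\mathbb{F}})$ is nonzero cuspidal with $\Pi_{v_0}=\pi'$ and $\Pi_v$ non-supercuspidal for all $v\ne v_0$ (a non-discrete series representation at $v_1$ by Thm~\ref{T:summary}(i), generic unramified elsewhere); likewise $\Pi^{\mathrm{gen}}:=\Theta\bigl(JL(\Pi_1^{\mathbb{D}})\boxtimes JL(\Pi_2^{\mathbb{D}})\bigr)$ from $\GSO_{2,2}$ is nonzero cuspidal (the two factors being inequivalent cuspidal representations of $\GL_2$, the relevant Rankin--Selberg value nonzero at the edge by Jacquet--Shalika) with $\Pi^{\mathrm{gen}}_{v_0}=\pi$ and $\Pi^{\mathrm{gen}}_v$ non-supercuspidal for $v\ne v_0$. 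Finally, by the construction of $L$ together with the compatibility of the local and global Jacquet--Langlands correspondences, $\phi_{\Pi_v}=\phi_{\Pi^{\mathrm{gen}}_v}$ for every $v\ne v_0$ (both equal $\phi_{(\Pi_1^{\mathbb{D}})_v}\oplus\phi_{(\Pi_2^{\mathbb{D}})_v}$ at split places and $\phi_{JL((\Pi_1^{\mathbb{D}})_v)}\oplus\phi_{JL((\Pi_2^{\mathbb{D}})_v)}$ at $v_1$), while at $v_0$ we already know $\phi_{\Pi_{v_0}}=\phi_{\pi'}=\phi_\pi=\phi_{\Pi^{\mathrm{gen}}_{v_0}}$.

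Two standard inputs then finish the argument. First, by the theory of Eisenstein series, for a maximal parabolic with cuspidal Levi datum the composite of the two global standard intertwining operators between $I_P$ and $I_{\bar P}$ is the identity, so that $\prod_v\mu(s,\rho_v\boxtimes\Sigma_v,\psi_v)=1$ for any cuspidal $\rho$ on $\GSp_4(\A_{\mathbb{F}})$. Second, for irreducible non-supercuspidal $\rho_v$ on $\GSp_4(\mathbb{F}_v)$, $\mu(s,\rho_v\boxtimes\Sigma_v,\psi_v)$ is still Shahidi's product $\gamma(s,\rho_v\boxtimes\Sigma_v,r_1,\psi_v)\,\gamma(s,\rho_v\boxtimes\Sigma_v,r_1^\vee,\overline\psi_v)\,\gamma(2s,\rho_v\boxtimes\Sigma_v,r_2,\psi_v)\,\gamma(2s,\rho_v\boxtimes\Sigma_v,r_2^\vee,\overline\psi_v)$ — extending the generic case (the Proposition above) by the multiplicativity of Plancherel measures and $\gamma$-factors along the Langlands classification of $\rho_v$ — and by Theorem~\ref{T:localfac} together with Henniart's twisted symmetric square identity (handled as in that Proposition) this quantity depends only on $\phi_{\rho_v}$ and $\phi_{\Sigma_v}$. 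Isolating the place $v_0$ in the product formula for $\Pi\boxtimes\Sigma$ and for $\Pi^{\mathrm{gen}}\boxtimes\Sigma$ gives
\[
\mu(s,\pi'\boxtimes\sigma,\psi)=\prod_{v\ne v_0}\mu(s,\Pi_v\boxtimes\Sigma_v,\psi_v)^{-1},\qquad
\mu(s,\pi\boxtimes\sigma,\psi)=\prod_{v\ne v_0}\mu(s,\Pi^{\mathrm{gen}}_v\boxtimes\Sigma_v,\psi_v)^{-1};
\]
since $\Pi_v$ and $\Pi^{\mathrm{gen}}_v$ are non-supercuspidal with $\phi_{\Pi_v}=\phi_{\Pi^{\mathrm{gen}}_v}$ for $v\ne v_0$, the second input makes the two products agree term by term, so $\mu(s,\pi'\boxtimes\sigma,\psi)=\mu(s,\pi\boxtimes\sigma,\psi)$. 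When $\pi$ is moreover supercuspidal, the reducibility assertion is then immediate from the Proposition above on the analytic behavior of $\mu(s,\cdot\boxtimes\sigma,\psi)$ for supercuspidal data.

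I expect the genuine difficulty to be organizational rather than deep. The delicate point is to arrange the global lift $\Pi=\Theta(\Pi_1^{\mathbb{D}}\boxtimes\Pi_2^{\mathbb{D}})$ to be simultaneously nonzero, cuspidal, equal to $\pi'$ at $v_0$, and non-supercuspidal at \emph{every} other place — it is precisely this last requirement that forces $(\Pi_1^{\mathbb{D}})_{v_1}=(\Pi_2^{\mathbb{D}})_{v_1}$, so that the auxiliary place contributes a non-discrete series lift rather than another non-generic supercuspidal — and likewise for $\Pi^{\mathrm{gen}}$; all of this rests on the explicit determination of the three theta correspondences in the companion paper \cite{GT4} and on \cite{GT3} for the nonvanishing and cuspidality of the global lifts. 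The only other point that should be stated carefully is the extension of Shahidi's Plancherel-measure formula to non-generic non-supercuspidal representations, which is routine by multiplicativity along the Langlands classification.
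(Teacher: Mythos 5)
Your reduction of the Corollary to Theorem~\ref{T:Plan} is the same as the paper's, but your proof of Theorem~\ref{T:Plan} takes a genuinely different route, and it is worth comparing the two.

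The paper picks a number field $\mathbb{F}$ with \emph{two} places $v_1, v_2$ satisfying $\mathbb{F}_{v_1}=\mathbb{F}_{v_2}=F$, ramifies the global quaternion algebra $\mathbb{D}$ at exactly those two places, and globalizes $\tau_1\boxtimes\tau_2$ on $\GSO_{2,2}(\A_\mathbb{F})$ to a cuspidal $\Xi$ with $\Xi_{v_1}=\Xi_{v_2}=\tau_1\boxtimes\tau_2$. Letting $\Pi=\Theta(\Xi)$ and $\Pi'=\Theta(\Xi^{\mathbb{D}})$, the two global lifts are locally \emph{isomorphic} at every $v\neq v_1,v_2$, so comparing the two global functional equations cancels all those places on the nose and yields $\mu(s,\pi\boxtimes\sigma,\psi)^2=\mu(s,\pi'\boxtimes\sigma,\psi)^2$. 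One then extracts the square root using the non-negativity of the Plancherel measure on $i\mathbb{R}$.

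You instead pick one auxiliary finite place $v_1$ (as in the proof of the Proposition following Theorem~\ref{T:localfac}), ramify $\mathbb{D}$ at $\{v_0,v_1\}$, and arrange the two $\mathbb{D}^\times$-factors to agree at $v_1$. Now the two global lifts $\Pi$ and $\Pi^{\mathrm{gen}}$ are isomorphic at every $v\neq v_0,v_1$, but at $v_1$ they are \emph{different} constituents $\pi_{\mathrm{ng}}(\tau)$ and $\pi_{\mathrm{gen}}(\tau)$ of the same tempered induced representation $I_{Q(Z)}(1,\tau)$. Your cancellation therefore needs one extra local input: that these two constituents have the same Plancherel measure against $\Sigma_{v_1}$. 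That is true, but the cleanest justification is not quite the one you give (Shahidi's formula for non-generic non-supercuspidal representations plus Theorem~\ref{T:localfac} plus Henniart); it is simply that the Plancherel measure is multiplicative along parabolic induction and so depends only on the supercuspidal support of $\pi_v\boxtimes\Sigma_v$, which is the same for $\pi_{\mathrm{ng}}(\tau)$ and $\pi_{\mathrm{gen}}(\tau)$. With that replacement your argument is sound, and it has the minor advantage of producing $\mu=\mu$ directly without squaring and taking a positive root. The tradeoff is symmetric: the paper's two-place trick makes the local cancellation trivial at the cost of a $\mu^2$, while your one-place trick avoids the square at the cost of one extra (elementary) observation at $v_1$. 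Either route should also carefully note, as the paper does, that the decomposition of the global intertwining operator into local ones depends on the choice of $\Psi$ (hence the requirement $\Psi_{v_0}=\psi$), and that the nonvanishing and cuspidality of the global theta lift from $\GSO(\mathbb{D})$ is what is supplied by \cite{GT3}.
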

\vskip 10pt

The rest of the section is devoted to the proof of Thm. \ref{T:Plan}, which is similar to the proof of \cite[Prop. 2.1]{MS2}. We know by the results of \S\ref{S:packets} that the representations $\pi$ and $\pi'$ can be obtained as theta lifts from $\GSO_{2,2}(F) $ and $\GSO(D)$ respectively, where $D$ is the quaternion division algebra over $F$. Thus, we have
\[
\pi = \theta(\tau_1 \boxtimes \tau_2) \quad \text{and} \quad \pi' = \theta(\tau^D_1 \boxtimes \tau^D_2), \]
where $\tau_i^D$ is a representation of $D^{\times}$ with the Jacquet-Langlands lift $\tau_i$ on $\GL_2(F)$.
Moreover, we know that $\tau_1 \ne \tau_2$.
\vskip 5pt

Now choose a number field $\mathbb{F}$ such that for two places $v_1$ and $v_2$, one has
$\mathbb{F}_{v_1} =\mathbb{F}_{v_2}=  F$. Let $\mathbb{D}$ be the quaternion division algebra over $\mathbb{F}$ which is ramified precisely at $v_1$ and $v_2$.   One can find a cuspidal representation $\Xi$ of $\GSO_{2,2}(\A_{\mathbb{F}}) $ such that $\Xi_{v_i} = \tau_1 \boxtimes \tau_2$ for $i =1$ or $2$.
If $\Xi^{\mathbb{D}}$ denotes the Jacquet-Langlands lift of $\Xi$ to
$\GSO(\mathbb{D})(\A_{\mathbb{F}})$, then  we have $\Xi_{v_i}^{\mathbb{D}} = \tau_1^D \boxtimes \tau_2^D$ for $i = 1$ or $2$. We may now consider the global theta lifts $\Pi  = \Theta(\Xi)$ and $\Pi' = \Theta(\Xi^{\mathbb{D}})$ of $\Xi$ and $\Xi^{\mathbb{D}}$ to $\GSp_4(\A_{\mathbb{F}})$. We have:

\vskip 10pt

\begin{Lem}
The global theta lifts $\Pi= \Theta(\Xi)$ and $\Pi' = \Theta(\Xi^{\mathbb{D}})$ are nonzero irreducible cuspidal representations of $\GSp_4(\A_{\mathbb{F}})$. Moreover, for $i =1$ or $2$,
\[  \Pi_{v_i} = \pi \quad \text{and} \quad \Pi'_{v_i} = \pi', \]
and for all $v \ne v_1$ or $v_2$, $\Pi_v = \Pi'_v$.
\end{Lem}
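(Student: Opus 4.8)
\emph{Proof plan.} The first step is to pass to the language of $\GL_2$. Using the isomorphism $\GSO_{2,2}\cong(\GL_2\times\GL_2)/\Delta\GL_1$, write $\Xi=\Sigma_1\boxtimes\Sigma_2$ with $\Sigma_1,\Sigma_2$ cuspidal automorphic representations of $\GL_2(\A_{\mathbb F})$ having a common central character; by the choice of $\Xi$ one has $\Sigma_{i,v_j}\cong\tau_i$ for $i,j\in\{1,2\}$, so that $\Sigma_1\not\cong\Sigma_2$ because $\tau_1\not\cong\tau_2$. Similarly $\Xi^{\mathbb D}$ corresponds to the pair of cuspidal automorphic representations of $\mathbb D^{\times}(\A_{\mathbb F})$ obtained from $\Sigma_1,\Sigma_2$ by the global Jacquet--Langlands correspondence; these are again non-isomorphic and, away from $\{v_1,v_2\}$, agree with $\Sigma_1,\Sigma_2$ and carry the same standard and Rankin--Selberg $L$-functions.

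The second step is non-vanishing and cuspidality. Since $\Sigma_1\not\cong\Sigma_2$, the theta lift of $\Xi$ to the smaller member $\GSp_2\cong\GL_2$ of the symplectic tower vanishes, so by the tower property any nonzero theta lift of $\Xi$ to $\GSp_4$ is cuspidal. For non-vanishing itself I would invoke the Rallis inner product formula (equivalently the regularized Siegel--Weil formula in the relevant boundary range $\dim V=\dim W=4$): the Petersson norm of a theta lift is, up to nonzero constants, the product of a partial Rankin--Selberg $L$-value attached to $\Sigma_1\times\Sigma_2$ with a collection of local zeta integrals, and the latter are nonzero precisely when the local big theta lifts $\Theta_{\psi_v}(\Xi_v)$ are nonzero. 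At $v_1$ and $v_2$ this local lift is $\pi\neq0$ by hypothesis; at all other finite places $\Xi_v$ is unramified and its lift is the corresponding unramified representation of $\GSp_4(\mathbb F_v)$; at the complex places non-vanishing follows from the explicit archimedean theta correspondence of Adams--Barbasch \cite{AB}, after choosing the archimedean components of $\Xi$ suitably. The $L$-value is nonzero by the standard non-vanishing of Rankin--Selberg $L$-functions on the edge of the critical strip. The identical argument applies to $\Xi^{\mathbb D}$: at $v_1,v_2$ its local theta lift is $\pi'\neq0$ by hypothesis; at the remaining places $\Xi^{\mathbb D}_v\cong\Xi_v$ since $\mathbb D$ splits there, so those local lifts coincide with those of $\Xi$ and are nonzero; and $\Xi^{\mathbb D}$ carries the same relevant $L$-value. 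Thus $\Pi=\Theta(\Xi)$ and $\Pi'=\Theta(\Xi^{\mathbb D})$ are nonzero cuspidal representations of $\GSp_4(\A_{\mathbb F})$; the technical content of this step is exactly that of \cite{GT3} (cf.\ also \cite{GRS}).

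The third step is irreducibility and the identification of local components. The big local theta lift $\Theta_{\psi_v}(\Xi_v)$ is irreducible at every place: at $v_1,v_2$ by Proposition~\ref{P:super} when $\tau_1\boxtimes\tau_2$ is supercuspidal, and by the explicit determination of the $\GSO_{2,2}\times\GSp_4$ correspondence in \cite{GT4} in general; at the finite split places because unramified representations have irreducible unramified theta lifts; and at the complex places by \cite{AB}. Consequently the abstract big global theta lift $\bigotimes_v\Theta_{\psi_v}(\Xi_v)$ is an irreducible $\GSp_4(\A_{\mathbb F})$-module, and since $\Pi$ is a nonzero quotient of it, $\Pi$ is irreducible with $\Pi_v=\Theta_{\psi_v}(\Xi_v)$ for every $v$; likewise for $\Pi'$. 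In particular $\Pi_{v_i}=\Theta_{\psi_{v_i}}(\tau_1\boxtimes\tau_2)=\pi$ and $\Pi'_{v_i}=\Theta_{\psi_{v_i}}(\tau_1^D\boxtimes\tau_2^D)=\pi'$ for $i=1,2$, while for $v\notin\{v_1,v_2\}$ one has $\Pi'_v=\Theta_{\psi_v}(\Xi^{\mathbb D}_v)=\Theta_{\psi_v}(\Xi_v)=\Pi_v$ because $\Xi^{\mathbb D}_v\cong\Xi_v$.

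The main obstacle is the second step, namely the non-vanishing of the two global theta lifts: it rests on the Rallis inner product formula in the boundary case together with the non-vanishing of the associated Rankin--Selberg $L$-value, and this is the part genuinely imported from \cite{GT3}. Everything else is a formal consequence of the tower property, Proposition~\ref{P:super}, and the compatibility of global theta lifts with their local constituents.
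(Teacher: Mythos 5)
Your argument reaches the same conclusion, but organizes the non-vanishing step differently from the paper. The paper treats the two lifts by separate methods: $\Theta(\Xi)$ is handled by the Whittaker--Fourier coefficient identity of \cite{GRS}, applicable because $\Xi$ is cuspidal (hence globally generic) on the split group $\GSO_{2,2}$, while $\Theta(\Xi^{\mathbb{D}})$ requires the regularized Siegel--Weil and Rallis inner product machinery of \cite{GT3}, since $\Xi^{\mathbb{D}}$ has no Whittaker model at $v_1,v_2$. You instead apply the Rallis inner product uniformly to both lifts. This is a valid alternative and is conceptually uniform, but it is heavier than necessary in the split case: genericity plus \cite{GRS} gives non-vanishing of $\Theta(\Xi)$ directly, whereas the inner-product route forces you to verify non-vanishing of an $L$-value and of local zeta integrals at every place. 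In particular, your assertion that the relevant $L$-value sits on the edge of the critical strip should be checked against the precise evaluation point in the Siegel--Weil identity of \cite{GT3} for $\dim V = \dim W = 4$; a near-central value would not be automatically nonzero, so this is the one spot where your route imports a genuine assumption that the paper's route avoids. On the other hand you helpfully spell out what the paper's two-sentence proof leaves implicit: cuspidality via the tower property and $\Sigma_1\ncong\Sigma_2$, irreducibility of $\Pi$ and $\Pi'$ from irreducibility of the local big theta lifts (Proposition~\ref{P:super} and \cite{GT4}), and the local-component identifications, including $\Pi_v = \Pi'_v$ away from $v_1,v_2$ because $\mathbb{D}$ splits there.
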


\begin{proof}
Since $\Xi$ is globally generic and $\tau_1 \ne \tau_2$, the non-vanishing and cuspidality of $\Pi = \Theta(\Xi)$ follows from \cite{GRS}.  On the other hand, the non-vanishing and cuspidality of $\Pi' = \Theta(\Xi^{\mathbb{D}})$  follows from [GT3].
\end{proof}

\vskip 10pt

Hence, we have two irreducible cuspidal representations $\Pi$ and $\Pi'$ on $\GSp_4(\A_{\mathbb{F}})$ which are locally isomorphic at all $v \ne v_1$ or $v_2$, and whose local components at $v_i$ ($i=1$ or $2$) are $\pi$ and $\pi'$ respectively.
\vskip 10pt

Similarly, let $\Sigma$ be a cuspidal representation of $\GL_r(\A_{\mathbb{F}})$ such that $\Sigma_{v_i} = \sigma$ for $i = 1$ and $2$. Now consider the global induced representations on $G_r(\mathbb{A}_{\mathbb{F}})$:
\[  I_{P_r}(s, \Pi \boxtimes \Sigma) \quad \text{and} \quad I_{P_r}(s, \Pi' \boxtimes \Sigma). \]
Fix an additive character $\Psi$ of $\mathbb{F} \backslash \A_{\mathbb{F}}$ such that $\Psi_{v_1} =\Psi_{v_2} =  \psi$. Then 
there are global standard intertwining operators
\[ A_{\Psi}(s, \Pi \boxtimes \Sigma, N_r, \bar{N}_r) : I_{P_r}(s, \Pi  \boxtimes \Sigma) \longrightarrow I_{\bar{P}_r}(s, \Pi \boxtimes \Sigma) \]
and
\[  A_{\Psi}(s, \Pi' \boxtimes \Sigma, N_r, \bar{N}_r) : I_{P_r}(s,\Pi' \boxtimes \Sigma) \longrightarrow I_{\bar{P}_r}(s, \Pi'  \boxtimes \Sigma), \]
which satisfy the functional equations
\[ A_{\Psi}(s, \Pi \boxtimes \Sigma, \bar{N}_r, N_r)  \circ A_{\Psi}(s, \Pi \boxtimes \Sigma, N_r, \bar{N}_r)
 = \operatorname{Id}  \]
and
\[ A_{\Psi}(s, \Pi' \boxtimes \Sigma, \bar{N}_r, N_r)  \circ A_{\Psi}(s, \Pi' \boxtimes \Sigma, N_r, \bar{N}_r)  = \operatorname{Id}.  \]
These global intertwining operators are actually independent of the choice of $\Psi$, but their decompositions into local intertwining operators (for $\text{Re}(s)$ large) depend on $\Psi$.
By comparing the two global functional equations and using the fact that
$\Pi_v = \Pi'_v$ for all $v \ne v_1$ or $v_2$, we deduce that
\[  \mu(s, \pi \boxtimes \sigma,\psi)^2 = \mu(s,\pi' \boxtimes \sigma,\psi)^2 \quad \text{for all $s \in \C$.} \]
However, the two Plancherel measures are $\geq 0$ when $s$ is purely imaginary. So we have the desired equality:
\[  \mu(s, \pi \boxtimes \sigma,\psi) = \mu(s,\pi' \boxtimes \sigma,\psi). \]
This completes the proof of Thm. \ref{T:Plan}.

\vskip 10pt

In a similar fashion, the group $M_r = \GSp_4 \times \GL_r$ may be considered as a Levi subgroup of a maximal parabolic subgroup $P'_r $ of $G'_r = \GSp_{2r+4}$. Thus,
given a representation $\pi \boxtimes \sigma$ of $M_r(F)$, one may consider the generalized principal series representation $I_{P'_r}(s,\pi \boxtimes \sigma)$ and the associated Plancherel measure $\mu'(s, \pi \boxtimes \sigma,\psi)$. The same argument as above gives:
\vskip 5pt

\begin{Thm} \label{T:Plan2}
Let $\{ \pi, \pi'\}$ be an $L$-packet of $\GSp_4(F)$ such that $\pi'$ is non-generic supercuspidal
and $\pi$ is a generic discrete series representation. Then for any supercuspidal
representation $\sigma$ of $\GL_r(F)$,
\[  \mu'(s, \pi \boxtimes \sigma,\psi) = \mu'(s, \pi' \boxtimes \sigma,\psi) \]
and these are in turn given by:
 \[  \gamma(s, std \phi_{\pi}^{\vee} \otimes \phi_{\sigma}, \psi) \cdot \gamma(-s, std \phi_{\pi} \otimes \phi_{\sigma}^{\vee},\overline{\psi}) \cdot \gamma(2s, \bigwedge^2 \phi_{\sigma}  ,\psi) \cdot
 \gamma(-2s, \bigwedge^2 \phi_{\sigma}^{\vee} ,\overline{\psi}). \]
 \end{Thm}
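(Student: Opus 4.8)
The plan is to run the proof of Theorem~\ref{T:Plan} essentially verbatim, with the group $G_r=\GSpin_{2r+5}$ (and its parabolic $P_r$) replaced by $G'_r=\GSp_{2r+4}$ (and its parabolic $P'_r$ with Levi $M'_r=\GSp_4\times\GL_r$). Concretely, I would reuse the global setup already constructed there: a totally imaginary $\mathbb F$ with $\mathbb F_{v_1}=\mathbb F_{v_2}=F$, the quaternion algebra $\mathbb D$ over $\mathbb F$ ramified exactly at $v_1,v_2$, a cuspidal $\Xi$ on $\GSO_{2,2}(\A_{\mathbb F})$ with $\Xi_{v_i}=\tau_1\boxtimes\tau_2$ and its Jacquet--Langlands transfer $\Xi^{\mathbb D}$, together with the global theta lifts $\Pi=\Theta(\Xi)$ and $\Pi'=\Theta(\Xi^{\mathbb D})$ to $\GSp_4(\A_{\mathbb F})$, which are nonzero irreducible cuspidal (by \cite{GRS} and \cite{GT3}), locally isomorphic away from $\{v_1,v_2\}$, and satisfy $\Pi_{v_i}=\pi$, $\Pi'_{v_i}=\pi'$. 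Choosing a cuspidal $\Sigma$ on $\GL_r(\A_{\mathbb F})$ with $\Sigma_{v_i}=\sigma$ and a global additive character $\Psi$ with $\Psi_{v_i}=\psi$, I would form the global induced representations $I_{P'_r}(s,\Pi\boxtimes\Sigma)$ and $I_{P'_r}(s,\Pi'\boxtimes\Sigma)$ on $G'_r(\A_{\mathbb F})$ and compare the two global functional equations $A_{\Psi}(s,\cdot,\bar N'_r,N'_r)\circ A_{\Psi}(s,\cdot,N'_r,\bar N'_r)=\operatorname{Id}$. Since the local components agree off $\{v_1,v_2\}$ and each of $v_1,v_2$ contributes the local Plancherel measure squared, this yields $\mu'(s,\pi\boxtimes\sigma,\psi)^2=\mu'(s,\pi'\boxtimes\sigma,\psi)^2$; positivity of Plancherel measures on the imaginary axis together with meromorphic continuation then forces the equality $\mu'(s,\pi\boxtimes\sigma,\psi)=\mu'(s,\pi'\boxtimes\sigma,\psi)$.

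For the explicit formula I would first record the Shahidi factorization of $\mu'(s,\pi\boxtimes\sigma,\psi)$, which is legitimate because $\pi$, hence $\pi\boxtimes\sigma$, is generic. This needs the decomposition of the Lie algebra $\mathfrak n_r'^{\vee}$ of the unipotent radical of the dual parabolic $P_r'^{\vee}\subset (G'_r)^{\vee}=\GSpin_{2r+5}(\C)$ under the adjoint action of $(M'_r)^{\vee}=\GSp_4(\C)\times\GL_r(\C)$. A direct computation of this action should give a two-step decomposition $\mathfrak n_r'^{\vee}=r'_1\oplus r'_2$ with $r'_1=std^{\vee}\boxtimes std$ — the datum listed in row~(c) of Table~\ref{table} — and $r'_2$ the exterior square $\bigwedge^2 std$ of the $\GL_r(\C)$-factor, up to a similitude character of the $\GSp_4(\C)$-factor. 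Then \cite[Thm.~3.5]{Sh} expresses $\mu'(s,\pi\boxtimes\sigma,\psi)$ as the product of $\gamma(s,\pi\boxtimes\sigma,r'_1,\psi)$, $\gamma(-s,\pi\boxtimes\sigma,r_1'^{\vee},\overline\psi)$, $\gamma(2s,\pi\boxtimes\sigma,r'_2,\psi)$ and $\gamma(-2s,\pi\boxtimes\sigma,r_2'^{\vee},\overline\psi)$. To pass to the Galois side I would apply to the first pair the standard $L$-function analogue of Theorem~\ref{T:localfac} proved at the end of \S\ref{S:factors} (again valid since $\pi$ is generic), identifying $\gamma(s,\pi\boxtimes\sigma,std\boxtimes std,\psi)$ with $\gamma(s,std\,\phi_{\pi}\otimes\phi_{\sigma},\psi)$ and using self-duality of $std\,\phi_{\pi}$ (as $std$ is orthogonal) to match the first two factors of the asserted formula; and to the second pair Henniart's theorem that the local Langlands correspondence for $\GL_r$ respects the twisted exterior square $L$-factor (Theorem~\ref{T:henniart}) together with its companion statement for the $\epsilon$-factor up to a root of unity \cite[Thm.~1.4]{He2}. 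As in the proof of the Proposition immediately preceding Theorem~\ref{T:Plan}, the root of unity cancels between the $\gamma(2s,-,\psi)$ and $\gamma(-2s,-,\overline\psi)$ factors, leaving $\gamma(2s,\bigwedge^2\phi_{\sigma},\psi)\cdot\gamma(-2s,\bigwedge^2\phi_{\sigma}^{\vee},\overline\psi)$. Collecting the four contributions gives the stated identity.

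The proof introduces no genuinely new analytic or geometric input beyond Theorem~\ref{T:Plan} and the results of \S\ref{S:factors}, so the main points requiring care are organizational. The one place where some actual computation seems unavoidable — and which I expect to be the real obstacle — is the adjoint-action bookkeeping identifying $r'_1$ and $r'_2$: in particular one must check that the similitude-character twist attached to $r'_2$ either composes trivially with $\phi_{\pi}$ or else cancels between the two $r'_2$-factors, so that the answer is \emph{exactly} $\bigwedge^2\phi_{\sigma}$ as asserted and not a twist of it. The other point to note is that the extension of Shahidi's local factors to non-generic non-supercuspidal representations discussed in \S\ref{S:certain} is not needed here: the hypothesis that $\pi$ is generic means all the relevant $\gamma$-factors are available directly, and the globalization in the first paragraph is the only ingredient that uses the specific fact that $\pi$ and $\pi'$ are theta lifts from $\GSO_{2,2}(F)$ and $\GSO(D)$ with $\tau_1\ne\tau_2$.
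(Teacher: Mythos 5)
Your proposal matches the paper exactly: the published proof of Theorem~\ref{T:Plan2} consists of the single remark ``The same argument as above gives,'' meaning the globalization argument of Theorem~\ref{T:Plan} run in $G'_r = \GSp_{2r+4}$ rather than $G_r = \GSpin_{2r+5}$, together with the Shahidi factorization and the Henniart input used in the Proposition immediately preceding Theorem~\ref{T:Plan}. The twist you flag on $r'_2$ is in fact a non-issue: in the dual Levi $\GSp_4(\C) \times \GL_r(\C) \subset \GSpin_{2r+5}(\C)$, the $\bigwedge^2$-piece of $\mathfrak{n}_r'^{\vee}$ is spanned by the root spaces for $e_i + e_j$ with $1 \le i < j \le r$, whose weights involve none of the torus coordinates of the $\GSp_4(\C)$-factor; equivalently, the adjoint action of $\GSp_4(\C)$ on this piece factors through $\PGSp_4(\C) \cong \SO_5(\C) \subset \SO_{2r+5}(\C)$, which acts trivially on $\bigwedge^2\C^r$, so $r'_2$ is exactly $\bigwedge^2 std$ of the $\GL_r(\C)$-factor with no similitude twist.
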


\vskip 10pt

\section{\bf Characterization of the Map $L$}  \label{S:char}

In this section, we shall show that the map $L$ is
characterized by the properties (i), (iii), (v) and (vi) in the Main Theorem. In particular, this will complete the proof of the Main Theorem. More precisely, we show:
\vskip 5pt

\begin{Thm} \label{T:unique}
There is at most one map
\[ L : \Pi(\GSp_4) \longrightarrow \Phi(\GSp_4) \]
satisfying:
\vskip 5pt

\noindent (a) the central character $\omega_{\pi}$ of $\pi$ corresponds to the similitude character $\simi(\phi_{\pi})$ of $\phi_{\pi} := L(\pi)$ under local class field theory;
\vskip 5pt

\noindent (b) $\pi$ is an essentially discrete series representation if and only if $\phi_{\pi}$ does not factor through any proper Levi subgroup of $\GSp_4(\C)$;

\vskip 5pt

\noindent (c) if $\pi$ is generic or non-supercuspidal, then for any irreducible representation
$\sigma$ of $\GL_r(F)$ with $r \leq 2$,
\[  \begin{cases}
L(s, \pi \times \sigma) = L(s, \phi_{\pi} \otimes \phi_{\sigma}), \\
\epsilon(s, \pi \times \sigma, \psi) = \epsilon(s, \phi_{\pi} \otimes \phi_{\sigma}, \psi). \end{cases} \]
\vskip 5pt

\noindent (d) if $\pi$ is  non-generic supercuspidal, then for any supercuspidal representation $\sigma$ of $\GL_r(F)$ with $r \leq 2$, the Plancherel measure $\mu(s, \pi \boxtimes \sigma)$ is equal to
\[  \gamma(s, \phi_{\pi}^{\vee} \otimes \phi_{\sigma}, \psi) \cdot \gamma(-s, \phi_{\pi} \otimes \phi_{\sigma}^{\vee},\overline{\psi}) \cdot \gamma(2s, Sym^2 \phi_{\sigma} \otimes \simi\phi_{\pi}^{-1}, \psi)
\cdot \gamma(-2s, Sym^2 \phi_{\sigma}^{\vee} \otimes \simi \phi_{\pi} ,\overline{\psi}). \]
 \end{Thm}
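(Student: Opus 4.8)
The plan is to take two maps $L$ and $L'$ both satisfying (a)--(d) and to prove $L(\pi)=L'(\pi)$ for every irreducible $\pi$. By (a) the parameters $L(\pi)$ and $L'(\pi)$ have the same similitude character, and by Lemma \ref{L:par} the map $\phi\mapsto\iota\circ\phi$, with $\iota:\GSp_4(\C)\hookrightarrow\GL_4(\C)\times\GL_1(\C)$, is injective on $\Phi(\GSp_4)$; so it suffices to show that the $4$-dimensional representation of $WD_F$ given by the $\GL_4(\C)$-component of $\iota\circ L(\pi)$ is determined by (a)--(d). Since the earlier sections already produce at least one map with all these properties, what is needed is a rigidity statement. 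Throughout I would use that $L$ and $L'$ give the same $L$-, $\epsilon$- and, via $\gamma(s,\phi,\psi)=\epsilon(s,\phi,\psi)L(1-s,\phi^\vee)/L(s,\phi)$, the same $\gamma$-factors, so that only these need be compared. The case division is whether property (c) applies to $\pi$, that is, whether $\pi$ is generic or non-supercuspidal.

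Suppose first that (c) applies, and distinguish two sub-cases according to whether every twisted $L$-factor $L(s,\pi\times\sigma)$ with $\sigma$ an irreducible representation of $\GL_1(F)$ or $\GL_2(F)$ is entire. If not all of them are entire, then $\phi_\pi$ is reducible as a $4$-dimensional representation of $WD_F$; using that $\phi_\pi$ is symplectic up to a similitude, so has no irreducible $3$-dimensional constituent, it follows that $\phi_\pi$ is built entirely from segments supported on $\GL_1(F)$ and $\GL_2(F)$, and the poles of the $L$-factors in (c), as $\sigma$ ranges over irreducible representations of $\GL_1(F)$ and $\GL_2(F)$, recover these segments and hence $\phi_\pi$ as a representation of $WD_F$; Lemma \ref{L:par} and (a) then fix the $\GSp_4(\C)$-structure. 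If on the other hand all these $L$-factors are entire, then $\phi_\pi$ is irreducible as a $4$-dimensional representation of $WD_F$ and trivial on the Deligne $\SL_2$ (compare Lemma \ref{L:endoscopic}), and the representation $\Pi$ of $\GL_4(F)$ with $L$-parameter $\phi_\pi$ (viewed in $\GL_4(\C)$) is then supercuspidal, hence generic. By the local Langlands correspondence for $\GL_4$, which preserves $\gamma$-factors of pairs, and by (c), the representations $\Pi,\Pi'$ attached to $L(\pi)$ and $L'(\pi)$ satisfy $\gamma(s,\Pi\times\sigma,\psi)=\gamma(s,\Pi'\times\sigma,\psi)$ for every $\sigma$ on $\GL_r$ with $r\le 2$, whence $\Pi\cong\Pi'$ by the local converse theorem for $\GL_4$, and so $L(\pi)=L'(\pi)$. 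Securing exactly the form of the $\GL_4$ converse theorem that is available --- twists by $\GL_1$ and $\GL_2$ only --- is the main external input here.

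There remains the case $\pi$ non-generic supercuspidal, where only (d) is at hand, and this is in my view the crux. By Theorem \ref{T:summary}(i) and the construction one has $\phi_\pi=\phi_1\oplus\phi_2$ with $\phi_1\ne\phi_2$ irreducible of dimension $2$ and $\det\phi_1=\det\phi_2=\omega_\pi$. One first checks that $L'(\pi)$ has the same endoscopic shape: it is a discrete series parameter by (b), and were it irreducible of dimension $4$ the formula in (d) would lack the zeros at $s=0$ contributed by its first two $\gamma$-factors, contradicting the behaviour of $\mu(s,\pi\boxtimes\sigma)$ forced, via Theorem \ref{T:Plan}, by comparison with the generic member of the $L$-packet, whose parameter is already known from the previous step. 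It then remains to recover the unordered pair $\{\phi_1,\phi_2\}$ from the Plancherel measures $\mu(s,\pi\boxtimes\sigma)$ as $\sigma$ runs over the supercuspidal representations of $\GL_1(F)$ and $\GL_2(F)$. The idea is that, for $\sigma$ supercuspidal on $\GL_2$ with parameter $\phi_\sigma$, the first two $\gamma$-factors of (d) contribute a zero at $s=0$ exactly when $\phi_\sigma$ is isomorphic to $\phi_1$ or $\phi_2$, while the last two contribute one exactly when $Sym^2\phi_\sigma$ contains $\simi\phi_\pi$; the second condition depends only on $\phi_\sigma$ --- it picks out the $2$-dimensional parameters preserving a symmetric form with similitude $\simi\phi_\pi$, already recognizable from the $\GL_1$-Plancherel measures --- so it can be separated off, and then the $\sigma$ witnessing the first phenomenon recover $\{\phi_1,\phi_2\}$, with Steinberg-type summands $\phi_i$ (unreachable by supercuspidal $\sigma$ on $\GL_2$) recovered instead from the $\GL_1$-twists. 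The main obstacle is precisely this disentangling: one must rule out cancellations among the four $\gamma$-factors in (d) that would conceal the ``diagonal'' contribution $\phi_\sigma\cong\phi_i$, and for this I would invoke the rigidity of the reducibility points of the generalized principal series $I_{P_r}(s,\pi\boxtimes\sigma)$ attached to supercuspidal $\pi\boxtimes\sigma$ (the result of Silberger recalled in Section \ref{S:Plan}), which converts the analytic behaviour of $\mu(s,\pi\boxtimes\sigma)$ into the rigid datum of its unique non-negative reducibility point. Combining all the cases and applying Lemma \ref{L:par} completes the proof.
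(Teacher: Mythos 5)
Your overall strategy matches the paper's, but two of the steps have genuine gaps. First, in the case where (c) applies, your dichotomy between ``all twisted $L$-factors entire'' and ``not all entire'' is not the same as ``$\phi_\pi$ irreducible over $WD_F$'' versus ``$\phi_\pi$ reducible.'' A parameter such as $\phi_\pi=\chi|-|^{-1}\boxtimes S_4$ is $WD_F$-irreducible yet has non-entire $\GL_1$-twists, so ``not all entire'' does not force $\phi_\pi$ to be built only from $\GL_1$- and $\GL_2$-segments, and your recipe of reading the segments off the poles breaks down there. The paper avoids this pitfall: it invokes Chen's $4\times 2$ local converse theorem when both candidate parameters are $WD_F$-irreducible, and otherwise carries out an explicit multiplicity count on $m_{\phi}(\rho,r)$ and $\nu_{\phi}(\rho,r)$, whose hardest sub-case is precisely distinguishing $(\chi\boxtimes S_2)\oplus(\mu\boxtimes S_2)$ from $\chi|-|^{-1}\boxtimes S_4$ --- the very configuration your dichotomy misclassifies.

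Second, in the non-generic supercuspidal case, the cancellation you flag as ``the main obstacle'' is not resolved by appealing to Silberger's rigidity of reducibility points, which records only the unique positive pole of $\mu$ and does not by itself disentangle the two pairs of $\gamma$-factors in (d). The observation your sketch misses is that requirement (a) already pins down $\simi\phi_\pi=\omega_\pi$; hence the $r_2$-factors $\gamma(\pm 2s, Sym^2\phi_\sigma^{\mp 1}\otimes\simi\phi_\pi^{\mp 1})$ on both sides of the Plancherel identity are \emph{identical} and cancel outright, reducing at once to the $r_1$-comparison of equation (\ref{E:gamma}). The paper then takes $\sigma=\tau_1$ when $\tau_1$ is supercuspidal, or $\sigma=\chi$ on $\GL_1$ when $\tau_1=St_\chi$, reads off constraints on the constituents of $\phi_\pi$ from the resulting poles together with requirement (b), and finally rules out constituents of the form $\phi_{\tau_i}\cdot|-|$ --- a terminal step your proposal does not address at all.
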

\vskip 10pt

The rest of the section is devoted to the proof of this theorem.
Recall from Lemma \ref{L:par} that the natural inclusion
\[  \iota: \GSp_4(\C) \longrightarrow \GL_4(\C) \times \GL_1(\C)  \]
gives rise to an injection
\[  \iota_*: \Phi(\GSp_4) \hookrightarrow \Phi(\GL_4) \times\Phi(\GL_1). \]
where the projection to $\Phi(\GL_1)$ is simply given by taking similitude character.
Hence, if $L_1$ and $L_2$ are two maps satisfying the requirements of the theorem,
it suffices to show that $\iota_*  \circ L_1 = \iota_* \circ L_2$. In view of the requirement (a) in the theorem, it suffices to show that for each $\pi \in \Pi(\GSp_4)$ with $\phi_i := L_i(\pi)$ ($i=1$ or $2$), $\phi_1$ and $\phi_2$ are equivalent as 4-dimensional representations of the Weil-Deligne group $WD_F$. We treat two different cases separately.
 \vskip 15pt

\noindent{\bf \underline{Case 1: $\pi$ is generic or non-supercuspidal}}
\vskip 5pt

In this case,  the requirement (c) implies that
\begin{equation} \label{E:Lfactor}
  \begin{cases}
L(s, \phi_1 \otimes \phi_{\sigma})= L(s, \phi_2 \otimes \phi_{\sigma}), \\
\epsilon(s, \phi_1 \otimes \phi_{\sigma}, \psi)= \epsilon(s, \phi_2 \otimes \phi_{\sigma}, \psi) \end{cases} \end{equation}
for any representation $\sigma$ of $\GL_r(F)$ with $r \leq 2$.
At this point, we remark that if the above equalities are assumed to hold for $r \leq 3$, then the results of Henniart in \cite[Cor. 1.4 and Thm. 1.7]{He3} would immediately imply that $\phi_1 \cong \phi_2$, as desired. However, since we are only requiring the above equalities to hold for $r \leq 2$, we need a refinement of these results of Henniart.

\vskip 10pt

The first refinement (which refines \cite[Cor. 1.4 and Cor. 1.9]{He3}) is the so-called $n \times (n-2)$ local converse theorem for $\GL_n$ which was shown in the thesis of J.-P. Chen and has recently appeared in print \cite{C}. We are of course interested in the case $n=4$.
Applying \cite[Thm. 1.1]{C}, we see that if $\phi_1$ and $\phi_2$ are both irreducible representations of the Weil-Deligne group $WD_F$, then equation (\ref{E:Lfactor}) implies that $\phi_1\cong \phi_2$.
\vskip 5pt

We thus need to consider the case when, without loss of generality, $\phi_1$ is reducible. Our argument below is nothing more than an elaboration of the proof of \cite[Thm. 1.7]{He3}, taking into account the key observation that since $\phi_1$ and $\phi_2$ factor through $\GSp_4(\C)$, they do not contain any irreducible 3-dimensional constituent.
\vskip 5pt

To be more precise, any irreducible representation of $WD_F$ is of the form $\rho \boxtimes S_r$ where $\rho$ is an irreducible representation of the Weil group $W_F$ and $S_r$ is the irreducible $r$-dimensional representation of $\SL_2(\C)$. If $\phi$ is a semisimple representation of $WD_F$, then for each $\rho \boxtimes S_r$, let $m_{\phi}(\rho,r)\geq 0$ be the multiplicity of $\rho \boxtimes S_r$ in $\phi$. Moreover, let $\nu_{\phi}(\rho,r) \geq 0$ denote the order of poles at $s = 0$ of the local $L$-factor $L(s, \phi \otimes (\rho \boxtimes S_r)^{\vee})$. We make a few simple observations:
\vskip 5pt

\begin{itemize}
\item[(i)] From the definitions, one has:
\[ L(s, \rho \boxtimes S_r) = L(s+ \frac{r-1}{2}, \rho). \]
\vskip 5pt

\item[(ii)] With $t = \min(r,k)$,
\[  L(s, (\rho \boxtimes S_r) \otimes (\varphi \boxtimes S_k)^{\vee})
= \prod_{j=0}^{t-1} L(s + \frac{r+k-2-2j}{2}, \rho \otimes \varphi^{\vee}). \]
\vskip 5pt

\item[(iii)] For any semisimple representation $\phi$ of $WD_F$,
\[  \nu_{\phi}(\rho, r) = \sum_{k\geq 1} \sum_{j = 1}^{\min(r,k)}  m_{\phi}(\rho |-|^{-(\frac{r+k}{2} - j)} , k). \]
\vskip 5pt

\item[(iv)] If the $\phi_1$ and $\phi_2$ above have a common constituent, then $\phi_1 \cong \phi_2$.
Indeed, after cancelling this common constituent, we have two representations $\phi_1'$ and $\phi_2'$ of dimension $<4$. Moreover,  $L$- and $\epsilon$-factors behave multiplicatively with respect to direct sums of representations. Hence, the analog of equation (\ref{E:Lfactor}) holds for $\phi'_i$ with $r \leq 2$. Thus, one can apply \cite[Cor. 1.4 and Thm. 1.7]{He3} to conclude that $\phi'_1 \cong \phi'_2$.
\end{itemize}
 \vskip 10pt

For $i = 1$ or $2$,  let us write $m_i(\rho, r)$ and  $\nu_i(\rho,r)$ for  $m_{\phi_i}(\rho, r)$ and $\nu_{\phi_i}(\rho,r)$ respectively.
 Now using the above observations (see also [He3, \S 4.2]), one sees that for any 1-dimensional character $\chi$ of $W_F$,  one has the following system of equations:
\[  \begin{cases}
\nu_i(\chi,2) = m_i(\chi|-|^{-1/2}, 1) + m_i(\chi ,2) + m_i(\chi |-|^{-1},2) +
m_i(\chi|-|^{-1},4) + m_i(\chi|-|^{-2},4), \\
\nu_i(\chi|-|^{-1/2}, 1) = m_i(\chi|-|^{-1/2},1) + m_i(\chi|-|^{-1},2) + m_i(\chi|-|^{-2}, 4), \\
\nu_i(\chi|-|^{1/2},1) = m_i(\chi|-|^{1/2},1) + m_i(\chi,2) + m_i(\chi|-|^{-1},4). \end{cases} \]
In particular, this implies that
\[  m_i(\chi,1) = \nu_i(\chi,1) + \nu_i(\chi|-|^{-1},1) - \nu_i(\chi|-|^{-1/2},2). \]
Thus, using the requirement (c), we conclude that any 1-dimensional irreducible constituent of $\phi_1$ occurs with the same multiplicity in $\phi_2$. In particular, if $\phi_1$ (and thus $\phi_2$) contains a 1-dimensional irreducible constituent,  we are done by observation (iv) above.
\vskip 10pt

We are thus reduced to the case where $\phi_1$ is the sum of two (possibly equivalent) irreducible 2-dimensional representations. If $\rho$ is an irreducible 2-dimensional representation of $W_F$, then
\[  \nu_i(\rho, 1) = m_i(\rho, 1) + m_i(\rho|-|^{-1/2}, 2). \]
 Thus, if $\phi_1$ contains $\rho$ and $\phi_2$ does not, we must have $\phi_2 = \rho|-|^{-1/2}
 \boxtimes S_2$ and $\nu_i(\rho,1) = m_1(\rho,1) = 1$. This would imply that $\phi_1 = \rho  \oplus \rho_0$ with $\rho_0 \ne \rho$, and thus $L(s, \phi_1 \otimes \rho_0^{\vee})$ has a pole at $s = 0$ whereas $L(s, \phi_2 \otimes \rho_0^{\vee})$ is holomorphic at $s=0$. With this contradiction, we see that if $\phi_1$ contains $\rho$, so must $\phi_2$.  Then by observation (iv) above, we have $\phi_1\cong \phi_2$.
 \vskip 10pt

 Finally, we are reduced to the case where
 \[  \phi_1 = (\chi \boxtimes S_2) \oplus (\mu \boxtimes S_2), \]
and $\phi_2$ is either of the same form as $\phi_1$ or is irreducible.
In this case, one has
\[
\nu_i(\chi|-|^{1/2},1) = m_i(\chi,2) + m_i(\chi|-|^{-1},4).  \]
Thus, if $\phi_2$ does not contain $\chi \boxtimes S_2$, then we must have $\phi_2 = \chi|-|^{-1} \boxtimes S_4$ and $m_1(\chi,2) =1$. This implies that $\mu \ne \chi$ and one has
\[   \nu_1(\mu,2) = 1 \quad \text{and} \quad \nu_2(\mu,2) = 0 \]
 which is a contradiction. Hence, $\phi_2$ must also contain $\chi \boxtimes S_2$ and by observation (iv) again, we deduce that $\phi_1 \cong \phi_2$.
  \vskip 15pt

\noindent{\bf \underline{Case 2: $\pi$ is non-generic supercuspidal}}
\vskip 5pt

In this case, we know that
$\pi = \theta(\tau_1^D \boxtimes \tau_2^D)$ for a representation $\tau_1^D \boxtimes \tau_2^D$ of $\GSO(D)$. If $\tau_i$ is the Jacquet-Langlands lift of $\tau_i^D$, then
set
\[  \Phi = \phi_{\tau_1} \oplus \phi_{\tau_2}. \]
We have shown in the previous section that $\mu(s, \pi \boxtimes \sigma)$ is equal to
\[  \gamma(s, \Phi \otimes \phi_{\sigma}, r_1,\psi) \cdot \gamma(s,\Phi \otimes \phi_{\sigma}, r_1^{\vee},\psi) \cdot \gamma(2s, \pi \boxtimes \sigma, r_2,\psi) \cdot \gamma(2s, \pi \boxtimes \sigma, r_2^{\vee},\psi). \]
Hence, if $L$ is a map satisfying the conditions of the theorem with $\phi_{\pi} := L(\pi)$, then
the requirement (d) implies that
\begin{equation} \label{E:gamma}
  \gamma(s, \phi_{\pi} \otimes \phi_{\sigma}, r_1,\psi) \cdot \gamma(s, \phi_{\pi} \otimes \phi_{\sigma}, r_1^{\vee},\psi) = \gamma(s, \Phi \otimes \phi_{\sigma}, r_1, \psi) \cdot \gamma(s,\Phi \otimes \phi_{\sigma}, r_1^{\vee}, \psi). \end{equation}
We need to show that this forces $\phi_{\pi}$ to be equal to $\Phi$.
\vskip 10pt

Suppose first that $\tau_1$ is supercuspidal.
Taking $\sigma = \tau_1$, the RHS of equation (\ref{E:gamma}) has a zero at $s = 0$, which implies that $L(-s, \phi_{\pi} \otimes \phi_{\tau_1}^{\vee}) \cdot L(s, \phi_{\pi}^{\vee} \otimes \phi_{\tau_1})$ has a pole at $s = 0$. Thus, for some irreducible constituent $\phi = \rho \boxtimes S_r$ of $\phi_{\pi}$,  the function
\[ L(-s, \phi \otimes \phi_{\tau_1}^{\vee}) \cdot L(s, \phi^{\vee}\otimes \phi_{\tau_1}) \]
has a pole at $s=0$. Since $\tau_1$ is supercuspidal, this occurs if and only if
\[  \rho = \phi_{\tau_1} \otimes |-|^{\pm (r-1)/2}. \]
Hence, we conclude that $\phi_{\pi}$ must contain  $\phi_{\tau_1}^{\vee}\cdot |-|^{\pm (r-1)/2} \boxtimes S_r$ as a constituent and further $r =1$ or $2$.  In other words, either $\phi_{\pi}$ contains $\phi_{\tau_1}$ or else $\phi_{\pi}$ is equal to the irreducible representation $\phi_{\tau_1} |-|^{\pm 1/2} \boxtimes S_2$.
\vskip 5pt

On the other hand, suppose that $\tau_1$ is the twisted Steinberg representation  $St_{\chi}$ so that
$\phi_{\tau_1} = \chi \boxtimes S_2$. Taking $\sigma = \chi$, we see that the RHS  of equation
(\ref{E:gamma}) has a zero at $s = 1/2$. Thus, for some constituent $\phi = \rho \boxtimes S_r$ of
$\phi_{\pi}$, the function
\[
\gamma(s, \rho^{\vee} \cdot \chi \boxtimes S_r) \cdot \gamma(-s, \rho \cdot \chi^{-1} \boxtimes S_r)
= (\text{$\epsilon$-factors}) \cdot \frac{L(\frac{r-1}{2} + 1-s, \rho \cdot \chi^{-1}) \cdot L(\frac{r-1}{2} + 1+ s, \rho^{\vee} \cdot \chi)}{L(\frac{r-1}{2}+s, \rho^{\vee} \cdot \chi) \cdot L(\frac{r-1}{2} -s, \rho \cdot \chi^{-1})} \]
has a zero at $s = 1/2$. Thus, the denominator must have a pole at $s = 1/2$.
This occurs if and only if
\[  \rho = \chi\cdot |-|^{r/2}  \quad \text{or} \quad \rho  = \chi \cdot |-|^{-(r-2)/2}. \]
Now by requirement (b) of the theorem, we know that $\phi_{\pi}$ is either irreducible or is the sum of two inequivalent 2-dimensional irreducible representations. Thus, $r = 2$ or $4$ above. In other words,
$\phi_{\pi}$ is either the irreducible 4-dimensional representation $S_4$ up to twisting, or else
$\phi_{\pi}$ contains $\chi|-| \boxtimes S_2$ or $\chi \boxtimes S_2$.

\vskip 5pt

Taking note of the fact  that $\tau_1 \ne \tau_2$ but $\omega_{\tau_1} = \omega_{\tau_2}$,
the above discussion shows that $\phi_{\pi}$ must be the sum of two irreducible 2-dimensional representations of $WD_F$, with each $\tau_i$ contributing a constituent
equal to $\phi_{\tau_i}$ or $\phi_{\tau_i} \cdot |-|$. Moreover, the latter can only occur if $\tau_i$ is non-supercuspidal. However, it is not difficult to check that equation (\ref{E:gamma}) cannot hold for all
$\sigma$ if $\phi_{\pi}$ contains $\phi_{\tau_i} \cdot |-|$ as a constituent.
The proof of Theorem \ref{T:unique} is complete.
\vskip 10pt

\vskip 5pt

\vskip 10pt

\noindent{\bf Remarks:} Observe that the requirement (b) is only used in the analysis of Case 2 above.
If the requirement (c) in the theorem is known to hold for all
representations $\pi$, then the proof given above in Case 1 shows that the map $L$ is characterized by (a) and (c) alone.

 \vskip 15pt
\section{\bf Comparison with Construction of Vigneras}  \label{S:vigneras}

When $p \ne 2$, Vigneras \cite{V} and Roberts \cite{Ro2} have given an alternative definition of $L$-packets. In this section, we verify that their definition agrees with ours.
For that, we shall begin by recalling their construction.
\vskip 10pt

We shall focus on an $L$-parameter $\phi$ for $\GSp_4$ which is trivial on $\SL_2(\C)$  and such that $\phi$ is irreducible of the form
\[   \phi = ind_{W_K}^{W_F} \phi_{\rho}. \]
Here $K/F$ is a quadratic field extension and $\phi_{\rho}$ is an irreducible 2-dimensional representation of $W_K$ which does not extend to $W_F$ but such that $\det \phi_{\rho}$ does extend.
In this case, there are two extensions of $\det \phi_{\rho}$ to $W_F$, differing from each other by twisting by $\omega_{K/F}$. One of these extensions of $\det \phi_{\rho}$ is precisely the similitude character $\mu$ of $\phi$. This is the crucial case treated in \cite{V}.
 \vskip 5pt

To construct the $L$-packet associated to $\phi$ following \cite{V}, consider the rank 4 quadratic spaces
\[
V^+_K = (K, \mathbb{N}_K) \oplus \mathbb{H} \quad \text{and} \quad
V_K^- = (K, \delta \cdot \mathbb{N}_K) \oplus \mathbb{H}, \]
where $\delta \in F^{\times} \smallsetminus \mathbb{N}_K(K^{\times})$.
The associated orthogonal similitude groups of these two quadratic spaces are isomorphic:
\[  \GSO(V_K^{\epsilon}) \cong
 (\GL_2(K) \times F^{\times})/ \Delta K^{\times},  \]
 where $\Delta(a) = (a, \mathbb{N}_K(a)^{-1})$ for $a \in K^{\times}$.
 \vskip 5pt

If $\rho$ is the supercuspidal representation of $\GL_2(K)$ associated to $\phi_{\rho}$, then
$\rho$ is not obtained via base change from $\GL_2(F)$.
One may consider the representation  $\rho  \boxtimes \mu$ of $\GSO(V_K^{\epsilon})$.
Considering the theta correspondence for $\GSO(V^{\epsilon}_K) \times \GSp_4(F)^+$, one gets two irreducible representations $\theta^{\epsilon}(\rho \boxtimes \mu)$ of $\GSp_4(F)^+$, which are exchanged under the conjugation action of an element of $\GSp_4(F) \smallsetminus \GSp_4(F)^+$.  Thus we have an irreducible representation
\[  V(\rho \boxtimes \mu) = ind_{\GSp_4^+}^{\GSp_4} \theta^{\epsilon}(\rho \boxtimes \mu). \]
The packet associated to $\phi$ by Vigneras consists of the single supercuspidal representation $V(\rho \boxtimes \mu)$.

\vskip 10pt

On the other hand, if $\Pi_{\rho}$ is the supercuspidal representation of $\GL_4(F)$ attached to
$\phi$, then $\Pi_{\rho}$ is the automorphic induction of $\rho$ from $\GL_2(K)$ to $\GL_4(F)$.
According to our definition of $L$-packets,
the packet associated to $\phi$ is the theta lift of $\Pi_{\rho} \boxtimes \mu$ to $\GSp_4(F)$. Thus, to show that our definition is consistent with that of Vigneras and Roberts, it suffices to show:
\vskip 5pt

\begin{Prop}
Under the theta lifting from $\GSp_4(F)$ to $\GSO(V)$, we have
\[  \theta(V(\rho \boxtimes \mu)) = \Pi_{\rho} \boxtimes \mu. \]
\end{Prop}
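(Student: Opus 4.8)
The strategy is to reduce the identity to a statement about automorphic induction (which reflects the fact that $V(\rho\boxtimes\mu)$ is itself a theta lift), and then to prove that statement by a global argument in the style of \S\ref{S:factors} and \S\ref{S:Plan}. By Theorem~\ref{T:Howe}, the big theta lift of $V(\rho\boxtimes\mu)$ from $\GSp(W)=\GSp_4$ to $\GSO(V)=\GSO_{3,3}(F)$ is irreducible whenever it is nonzero, so it suffices to show that this lift is nonzero and, writing it as $\Pi'\boxtimes\mu'$ under $\GSO(V)\cong(\GL_4(F)\times\GL_1(F))/\{(t,t^{-2})\}$, that $\mu'=\mu$ and $\Pi'\cong\Pi_\rho$. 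The equality $\mu'=\mu$ is immediate from the central character formula of \S\ref{S:theta}: $V(\rho\boxtimes\mu)$ has central character $\mu$ (its construction as a theta lift from $\GSO(V_K^\epsilon)$ contributes the factor $\chi_{V_K^\epsilon}^{\dim W/2}=\omega_{K/F}^2=1$), and the further lift to $\GSO(V)$ contributes $\chi_V^{\dim W/2}=1$ since $V$ is split with trivial discriminant. For the non-vanishing one uses that $V(\rho\boxtimes\mu)$ is a generic supercuspidal representation (being the theta lift of the generic representation $\rho\boxtimes\mu$ of the quasi-split group $\GSO(V_K^\epsilon)$), so by Lemma~\ref{L:Whit} its theta lift to $\GSO(V)$ is nonzero. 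Everything thus reduces to proving $\Pi'\cong\Pi_\rho$.

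Choose a totally imaginary number field $\mathbb F$, a finite place $v_0$ with $\mathbb F_{v_0}=F$, and a quadratic extension $\mathbb K/\mathbb F$ with $\mathbb K_{v_0}=K$. Globalize $\rho$ to a cuspidal representation $\widetilde\rho$ of $\GL_2(\A_{\mathbb K})$ with $\widetilde\rho_{v_0}=\rho$, unramified outside $v_0$ except possibly at one auxiliary place $v_1$ (to be used to force cuspidality below), and globalize $\mu$ to a Hecke character $\widetilde\mu$ of $\mathbb F$ with $\widetilde\mu_{v_0}=\mu$ extending the central character of $\widetilde\rho$ appropriately. Since $\phi_\rho\not\cong\phi_\rho^{c}$ (equivalently, $\phi_\rho$ does not extend to $W_F$), the representation $\widetilde\rho$ is not $\Gal(\mathbb K/\mathbb F)$-invariant, so its automorphic induction $\operatorname{AI}_{\mathbb K/\mathbb F}(\widetilde\rho)$ is a cuspidal representation $\Pi_{\widetilde\rho}$ of $\GL_4(\A_{\mathbb F})$ (Arthur--Clozel). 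Choosing $\mathbb V_K^\epsilon$ over $\mathbb F$ with $(\mathbb V_K^\epsilon)_{v_0}=V_K^\epsilon$, form the cuspidal representation $\widetilde\rho\boxtimes\widetilde\mu$ of $\GSO(\mathbb V_K^\epsilon)(\A_{\mathbb F})$, its global theta lift $\widetilde\pi$ to $\GSp_4(\A_{\mathbb F})$, and the further global theta lift $\Theta(\widetilde\pi)$ to $\GSO(\mathbb V)=\GSO_{3,3}(\A_{\mathbb F})$. Arguing as in the proof of Theorem~\ref{T:Plan} (using the auxiliary place $v_1$, chosen so that $\widetilde\pi_{v_1}$ is a supercuspidal representation not participating in the theta correspondence with the rank $4$ orthogonal groups), together with the Rallis inner product formula and \cite{GRS}, one checks that $\widetilde\pi$ is nonzero, globally generic and cuspidal with $\widetilde\pi_{v_0}=V(\rho\boxtimes\mu)$, and that $\Theta(\widetilde\pi)$ is nonzero and cuspidal, say $\Theta(\widetilde\pi)=\widetilde\Pi\boxtimes\widetilde\mu$ with $\widetilde\Pi$ cuspidal on $\GL_4(\A_{\mathbb F})$; by Theorem~\ref{T:Howe} its component at $v_0$ is the local theta lift $\theta(V(\rho\boxtimes\mu))=\Pi'\boxtimes\mu$.

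To identify $\widetilde\Pi$, fix a finite place $v\notin\{v_0,v_1\}$. There $\widetilde\pi_v$ is the unramified theta lift of $(\widetilde\rho\boxtimes\widetilde\mu)_v$ from $\GSO(\mathbb V_K^\epsilon)(\mathbb F_v)$ to $\GSp_4(\mathbb F_v)$; a standard computation of Satake parameters (as in \cite{K}) identifies its $L$-parameter with the induction of $\phi_{\widetilde\rho_v}$ from the Weil group of $\mathbb K_v$ to that of $\mathbb F_v$, and Proposition~\ref{P:unram} then shows that $\widetilde\Pi_v$ has the same parameter, i.e.\ $\widetilde\Pi_v\cong(\Pi_{\widetilde\rho})_v$. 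Hence $\widetilde\Pi$ and $\Pi_{\widetilde\rho}$ are cuspidal representations of $\GL_4(\A_{\mathbb F})$ with isomorphic local components at almost all places, so $\widetilde\Pi\cong\Pi_{\widetilde\rho}$ by strong multiplicity one. Localizing at $v_0$ gives $\Pi'\cong\widetilde\Pi_{v_0}\cong(\Pi_{\widetilde\rho})_{v_0}=\operatorname{AI}_{K/F}(\rho)=\Pi_\rho$, and therefore $\theta(V(\rho\boxtimes\mu))=\Pi_\rho\boxtimes\mu$, as claimed.

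The main obstacle is the package of global non-vanishing and cuspidality statements for the two theta lifts, which must be set up with a careful choice of globalization (including, as in \S\ref{S:Plan}, the auxiliary place) and must be reconciled with the bookkeeping of similitude groups: the similitude factor of $\GO(V_K^\epsilon)$ has image $\N_{K/F}(K^\times)$, so the theta lift from $\GSO(V_K^\epsilon)$ naturally lands on the index‑two subgroup $\GSp_4^+$, and the identity $V(\rho\boxtimes\mu)=\ind_{\GSp_4^+}^{\GSp_4}\theta^\epsilon(\rho\boxtimes\mu)$ must be carried through the argument via \cite{Ro1}. One also needs the unramified functoriality of the $\GSO(V_K^\epsilon)\times\GSp_4$ theta correspondence used above. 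Alternatively, a purely local proof can be given using the see‑saw dual pair diagram attached to an orthogonal decomposition $V=V_K^\epsilon\perp U$, where $U$ is the anisotropic binary quadratic space of discriminant $\operatorname{disc} K$ (so that $\GSO(U)\cong K^\times$), with $\GO(V_K^\epsilon)\times\GO(U)\subset\GO(V)$ above and the diagonal $\GSp(W)\subset\GSp(W)\times\GSp(W)$ below; this reduces the identity to computing the auxiliary theta lift of a character of $\GO(U)$ to $\GSp_4$ and a single $\Hom$‑space, evaluated through Kudla's filtration of the Weil representation, but the similitude bookkeeping remains the principal difficulty there as well.
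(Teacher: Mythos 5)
Your proof follows essentially the same route as the paper's: globalize $\rho\boxtimes\mu$ to a cuspidal representation of $\GSO(V_{\mathbb K})(\A_{\mathbb F})$, take the global theta lift to $\GSp_4(\A_{\mathbb F})$ and then to $\GSO(V)(\A_{\mathbb F})$, identify the resulting cuspidal representation on $\GL_4$ with the automorphic induction of the globalization via unramified functoriality (Proposition~\ref{P:unram}), and conclude by strong multiplicity one and localizing at $v_0$. The extra details you supply (the auxiliary place $v_1$, the explicit central-character bookkeeping, and the sketch of an alternative see-saw argument) are fine but do not change the underlying strategy, which matches the paper's.
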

\vskip 5pt

\begin{proof}
We shall show the proposition using global means. Pick a quadratic extension $\mathbb{K}/\mathbb{F}$ of number fields such that for some place $v$ of $\mathbb{F}$, we have $\mathbb{K}_v/ \mathbb{F}_v = K/F$. Consider the quadratic space $V_{\mathbb{K}}  = (\mathbb{K}, \mathbb{N}_{\mathbb{K}}) \oplus \mathbb{H}$ over $\mathbb{F}$ so that
\[  \GSO(V_{\mathbb{K}}) \cong (\GL_2(\mathbb{K}) \times \mathbb{F}^{\times})/\Delta \mathbb{K}^{\times}. \]
Let $\Xi \boxtimes \Upsilon$ be a cuspidal representation of $\GSO(V_{\mathbb{K}})(\A_{\mathbb{K}})$ whose local component at the place $v$ is isomorphic to $\rho \boxtimes \mu$. Consider the global theta lift
$V(\Xi \boxtimes \Upsilon)$ of $\Xi \boxtimes \Upsilon$ to $\GSp_4(\A_{\mathbb{K}})$. Then $V(\Xi \boxtimes \Upsilon)$  is a nonzero globally generic cuspidal representation with central character $\Upsilon$ and the local component  at $v$ of an irreducible constituent of $V(\Xi \boxtimes \Upsilon)$  is the representation $V(\rho \boxtimes \mu)$.
\vskip 5pt

Now consider the theta lift of $V(\Xi \boxtimes \Upsilon)$ from $\GSp_4(\A_{\mathbb{F}})$ to $\GSO(V)(\A_{\mathbb{F}})$. It is not difficult to see that this theta lift is nonzero and cuspidal. Thus we obtain a cuspidal representation $\Sigma \boxtimes \Upsilon$ on $\GL_4(\A_{\mathbb{F}}) \times \GL_1(\A_{\mathbb{F}})$, whose local component at $v$ is the representation
$\theta(V(\rho \boxtimes \mu))$.
\vskip 5pt

From the functoriality of theta correspondences for unramified representations (Prop. \ref{P:unram}), we see that $\Sigma$ is nearly equivalent to the automorphic induction of $\Xi$ from $\GL_2(\A_{\mathbb{K}})$ to $\GL_4(\A_{\mathbb{F}})$.
By the strong multiplicity one theorem for $\GL_4$, these two representations are thus isomorphic. In particular, the proposition follows by extracting the local component at $v$.
\end{proof}
\vskip 5pt

\begin{Cor}
When $p \ne 2$, Vigneras' construction of $L$-packets in \cite{V} exhausts all irreducible representations of $\GSp_4(F)$.
\end{Cor}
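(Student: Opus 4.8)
The plan is to deduce this from the surjectivity of $L$ in the Main Theorem together with the Proposition above and its routine variants. Since $L\colon\Pi(\GSp_4)\to\Phi(\GSp_4)$ is surjective, $\Pi(\GSp_4)$ is the disjoint union of the fibres $L_\phi$, so it is enough to show that for every $\phi\in\Phi(\GSp_4)$ the packet $V_\phi$ that Vigneras (completed by Roberts \cite{Ro2}, and by Roberts-Schmidt \cite{RS} in the non-discrete-series range) attaches to $\phi$ is exactly $L_\phi$; the representations in $\bigsqcup_\phi V_\phi$ are then precisely those in $\bigsqcup_\phi L_\phi=\Pi(\GSp_4)$.

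The essential case is that of $\phi$ irreducible as a $4$-dimensional representation of $WD_F$, and this is where the hypothesis $p\neq 2$ is used: since there are no primitive symplectic representations of $W_F$ when $p\neq 2$ (the argument of Zink being correct in that case; cf. the Remark in \S\ref{S:parameters}), such a $\phi$ is dihedral, i.e. of the form $ind_{W_K}^{W_F}\phi_\rho$ for a quadratic extension $K/F$, with $\phi_\rho$ a $2$-dimensional representation of $W_K$ --- a twisted Steinberg parameter of $\GL_2(K)$ in the non-supercuspidal case. Irreducibility of $\phi$ forces $\phi_\rho$ not to extend to $W_F$ while $\det\phi_\rho$ does extend, to $\simi\phi$, so $\phi$ is precisely of the type treated in the Proposition. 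That result gives $L(V(\rho\boxtimes\mu))=\phi$, hence $V_\phi=\{V(\rho\boxtimes\mu)\}=L_\phi$; the global theta argument proving the Proposition goes through verbatim when $\rho$ is a twisted Steinberg representation.

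For the reducible $\phi$ the comparison is more elementary: the discrete-series (endoscopic) ones are built by Vigneras by theta lifting from $\GO_{2,2}$ and $\GO_{4,0}$, which, after the passage between $\GO_4$ and $\GSO_4$, are the same lifts used in our Type~I construction, so $V_\phi=L_\phi$ follows by comparison of central characters and Jacquet-Langlands data; the non-discrete-series ones are matched with the Roberts-Schmidt packets of \cite[\S A.5]{RS} using the explicit parameters of non-supercuspidal representations in \cite[Prop.~13.1]{GT4}. I expect the main obstacle to be precisely the input used at the start of the previous paragraph --- namely that for $p\neq 2$ every irreducible symplectic $4$-dimensional $WD_F$-representation, equivalently every supercuspidal $L$-parameter of $\GSp_4$ not of endoscopic type, is dihedral of the shape to which the Proposition applies, so that no representation of $\GSp_4(F)$ escapes Vigneras' theta construction; the remaining verifications are bookkeeping with the tables of \cite{GT4} and \cite{RS}.
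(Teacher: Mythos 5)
Your proposal is essentially correct and follows the paper's (implicit) route: the Corollary is to be read as an immediate consequence of the surjectivity of $L$, established in \S\ref{S:packets}, combined with the agreement of Vigneras' packets with $L_\phi$, of which the Proposition just proved treats the decisive case. You have correctly identified that the role of the hypothesis $p\neq 2$ is exactly to guarantee (via Zink's argument, the $p=2$ failure of which is noted in the Remark of \S\ref{S:parameters}) that every irreducible $4$-dimensional symplectic representation of $W_F$ --- i.e.\ every supercuspidal, non-endoscopic $L$-parameter --- is dihedral, so that no supercuspidal representation of $\GSp_4(F)$ falls outside the range of Vigneras' theta-lifting construction; the remaining cases are handled by the explicit Langlands-quotient parameters and the Roberts--Schmidt tables, and this is the bookkeeping you defer.

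One small imprecision worth fixing: you write \emph{irreducible $4$-dimensional $WD_F$-representation} where you mean irreducible $4$-dimensional representation of $W_F$. These are not equivalent notions --- $\chi\boxtimes S_4$ and $\rho\boxtimes S_2$ are irreducible over $WD_F$ but not over $W_F$, and they are not the supercuspidal non-endoscopic parameters; they correspond to the twisted Steinberg $St_{\PGSp_4}\otimes\chi$ and the generalized Steinberg $St(\chi,\tau)$ respectively, whose parameters are not dihedral in your sense but are also not where the $p\neq 2$ hypothesis is needed. The primitivity question, and hence the content of the Corollary beyond what is already in the Main Theorem, lives entirely in the supercuspidal case $\phi|_{\SL_2(\C)}=1$.
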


\vskip 10pt

\section{\bf Global Generic Lifting from $\GSp_4$ to $\GL_4$}  \label{S:global}

We conclude this paper with a global consequence of our local results. Hence, in this section, $F$ will denote a number field and $\A$ its ring of adeles.  Jacquet, Piatetski-Shapiro and Shalika announced some thirty years ago that they  have used the global theta correspondence for $\GSp_4 \times \GSO(V)$ to show the weak lifting of globally generic cuspidal representations of
$\GSp_4(\A)$ to $\GL_4(\A)$, but  unfortunately their proof was never published. Recently, an alternative proof of this weak lifting, with some further refinements, was given by Asgari-Shahidi \cite{ASh} using the converse theorem.
\vskip 5pt

Though the results of Jacquet-Piatetski-Shapiro-Shalika were not published, essentially all the details of their proof can be found in the papers \cite{So} and \cite{GRS}.  Namely, given a globally generic cuspidal representation $\pi$ of $\GSp_4(\A)$, one considers its global theta lift $\Theta(\pi)$ to $\GSO(V)$ and shows that the Whittaker-Fourier
coefficients of $\Theta(\pi)$  can be expressed
in terms of the Whittaker coefficient of $\pi$ \cite[Prop. 2.2 and Cor. 2.5]{GRS}. Thus one concludes that $\Theta(\pi)$ is globally generic (and in particular is nonzero). Moreover, by the functoriality of the local theta correspondence for unramified representations (Prop. \ref{P:unram}), one sees that  $\Theta(\pi)$ is a weak lift of $\pi$.
\vskip 5pt

The purpose of this last section is to strengthen this weak lifting to a strong one:
\vskip 5pt

\begin{Thm}
The global theta lifting $\pi \mapsto \Theta(\pi)  = \Pi \boxtimes \mu$ defines an injection from the set of globally generic cuspidal representations of $\GSp_4(\A)$ to the set of globally generic automorphic representations $\Pi \boxtimes \mu$ of $\GL_4(\A) \times \GL_1(\A)$. Moreover, one has:
\vskip 5pt

\noindent (i) $\mu = \omega_{\pi}$ and the central character of $\Pi$ is $\omega_{\pi}^2$;
\vskip 5pt

\noindent (ii) $\Pi \cong \Pi^{\vee} \otimes  \omega_{\pi}$;

\vskip 5pt
\noindent (iii) The image of the lifting consists precisely of those automorphic representations $\Pi \boxtimes \mu$ satisfying one of the following:
\vskip 5pt

\begin{itemize}
\item[(a)] $\Pi$ is cuspidal and the (partial) twisted exterior square $L$-function $L^S(s, \Pi, \bigwedge^2 \otimes \mu^{-1})$ has a pole at $s = 1$;
\vskip 5pt

\item[(b)] $\Pi$ is  an isobaric sum $\tau_1 \boxplus \tau_2$, where $\tau_1 \ne \tau_2$  are cuspidal representations of $\GL_2(\A)$ such that $\tau_i = \tau_i^{\vee} \otimes \mu$. In this case, $\pi$ is the global theta lift of the cuspidal representation $\tau_1 \boxtimes \tau_2$ of $\GSO_{2,2}(\A) $.
\end{itemize}
\vskip 5pt

\noindent (iv) for each place $v$ of $F$, one has an equality of $L$-parameters
\[  \phi_{\pi,v} = \phi_{\Pi,v} : WD_{F_v} \longrightarrow \GL_4(\C). \]
\vskip 5pt
\noindent In other words, $\pi \mapsto \Theta(\pi)$ is a {\bf strong} lift and one has the following equalities of global $L$-functions and $\epsilon$-factors:
\[
 L(s, \pi \times \Sigma)   = L(s, \Pi \times \Sigma) \quad \text{and} \quad
\epsilon(s, \pi \times \Sigma) = \epsilon(s, \Pi \times \Sigma) \]
for any cuspidal representation $\Sigma$ of $\GL_r(\A)$.

\end{Thm}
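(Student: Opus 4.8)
The plan is to deduce the theorem from the \emph{weak} lifting of Jacquet--Piatetski-Shapiro--Shalika (the relevant Whittaker-coefficient computations being in \cite{So} and \cite{GRS}) together with the local results proved in \S\ref{S:theta}--\S\ref{S:packets}. First I would recall that for globally generic cuspidal $\pi$ the global theta lift $\Theta(\pi)$ to $\GSO(V) = \GSO_{3,3}$ is a nonzero globally generic automorphic representation; writing $\GSO(V)$ as a quotient of $\GL_4 \times \GL_1$ we obtain $\Theta(\pi) = \Pi \boxtimes \mu$, and since theta lifting preserves central characters (here $\chi_V = 1$) we get $\mu = \omega_\pi$, while the dictionary $\GSO(V) \cong (\GL_4 \times \GL_1)/\{(z,z^{-2})\}$ forces the central character of $\Pi$ to be $\mu^2 = \omega_\pi^2$; this is (i). The heart of the matter is the passage from weak to strong, that is part (iv). At each place $v$ the local component $\Theta(\pi)_v$ equals the local theta lift $\theta(\pi_v)$, which is irreducible by Thm.~\ref{T:Howe}; and since $\pi_v$ is generic, the construction of $L$ in \S\ref{S:packets} (together with Thm.~\ref{T:summary}(iv) when $\pi_v$ has nonzero lift to $\GSO_{2,2}$) shows that the $\GL_4$-part of $\theta(\pi_v)$ has Langlands parameter $\phi_{\pi_v}$. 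Hence $\phi_{\Pi_v} = \phi_{\pi_v}$ for all $v$, which is (iv); combining this with Prop.~\ref{P:unram} at the unramified places and Thm.~\ref{T:localfac} at the ramified ones, and taking Euler products, yields the asserted global identities of $L$- and $\epsilon$-factors against any cuspidal $\Sigma$ of $\GL_r$. Statement (ii) follows because each $\phi_{\Pi_v} = \phi_{\pi_v}$ factors through $\GSp_4(\C)$, so $\phi_{\Pi_v} \cong \phi_{\Pi_v}^\vee \otimes \mu_v$, whence $\Pi_v \cong (\Pi^\vee \otimes \omega_\pi)_v$ for all $v$ and strong multiplicity one for $\GL_4$ gives $\Pi \cong \Pi^\vee \otimes \omega_\pi$.

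For the description (iii) of the image I would argue according to whether $\pi$ is a global theta lift from $\GSO_{2,2}(\A)$. If it is not, the Rallis tower property forces $\Theta(\pi)$ to be cuspidal, so $\Pi$ is a cuspidal representation of $\GL_4(\A)$, which by (ii) is $\mu$-self-dual; by Jacquet--Shalika $L^S(s, \Pi \times \Pi^\vee)$ has a simple pole at $s = 1$, and since it factors as $L^S(s, \Pi, \bigwedge^2 \otimes \mu^{-1}) \cdot L^S(s, \Pi, Sym^2 \otimes \mu^{-1})$ with each factor holomorphic and non-vanishing at $s = 1$ except for a possible simple pole, exactly one of the two carries the pole. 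Using (iv) and the fact that $\phi_{\pi_v}$ is symplectic up to the similitude $\mu_v$, the representation $\Pi$ is of symplectic (not orthogonal) type, so the pole lies in the twisted exterior square factor; this is case (a). If instead $\pi = \Theta(\tau_1 \boxtimes \tau_2)$ for a cuspidal $\tau_1 \boxtimes \tau_2$ of $\GSO_{2,2}(\A)$, then necessarily $\tau_1 \ne \tau_2$ (otherwise the lift to $\GSp_4$ would not be cuspidal) and $\omega_{\tau_i} = \omega_\pi = \mu$, so $\tau_i \cong \tau_i^\vee \otimes \mu$ automatically on $\GL_2$; comparing local theta lifts place by place via Thm.~\ref{T:summary}(iv) and invoking strong multiplicity one for $\GL_4$ identifies $\Pi$ with the isobaric sum $\tau_1 \boxplus \tau_2$, which is case (b).

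Conversely, to see that every $\Pi \boxtimes \mu$ of type (a) or (b) occurs, I would run the correspondence downwards. For type (a), a pole of $L^S(s, \Pi, \bigwedge^2 \otimes \mu^{-1})$ at $s = 1$ means the cuspidal $\Pi$ is symplectic-type, and the global theta descent of $\Pi \boxtimes \mu$ from $\GSO_{3,3}$ to $\GSp_4$ is nonzero, cuspidal and globally generic (the global analogue of Cor.~\ref{C:MS}, via the Rallis tower and inner-product formula, cf.~\cite{GRS}); calling this $\pi$, the local identifications $\Theta(\pi)_v = \theta(\pi_v) = \Pi_v \boxtimes \mu_v$ with strong multiplicity one give $\Theta(\pi) \cong \Pi \boxtimes \mu$. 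For type (b), $\tau_1 \boxtimes \tau_2$ is a cuspidal representation of $\GSO_{2,2}(\A)$ with $\tau_1 \ne \tau_2$, so by \cite{GRS} its global theta lift $\pi$ to $\GSp_4(\A)$ is nonzero, cuspidal and globally generic, and Thm.~\ref{T:summary}(iv) applied place by place identifies $\Theta(\pi)$ with $(\tau_1 \boxplus \tau_2) \boxtimes \mu$. Finally, injectivity: if $\Theta(\pi) \cong \Theta(\pi')$ then $\phi_{\pi_v} = \phi_{\pi'_v}$ for all $v$ by (iv), and since $\pi_v$ and $\pi'_v$ are both the unique generic member of the local $L$-packet attached to this parameter (by the Main Theorem), $\pi_v \cong \pi'_v$ for all $v$, whence $\pi \cong \pi'$ by strong multiplicity one for globally generic cuspidal representations of $\GSp_4$.

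The step I expect to be the main obstacle is the converse part of (iii), specifically the assertion that for a cuspidal symplectic-type $\Pi$ the theta descent from $\GSO_{3,3}$ to $\GSp_4$ is nonzero, cuspidal and generic: this is a genuinely global theta-correspondence input (the Rallis tower property, the Rallis inner-product formula, and the Whittaker-coefficient computations of \cite{GRS}), not something supplied by the local theory developed here. A secondary technical point requiring care is the compatibility of the global theta lift with the \emph{local} theta lifts for \emph{similitude} groups, i.e.\ the identification $\Theta(\pi)_v = \theta(\pi_v)$; this is exactly why the extension of Roberts' results carried out in \S\ref{S:theta}--\S\ref{S:dual} is needed.
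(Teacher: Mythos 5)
Your proposal follows the paper's overall strategy for (i), (ii), (iv) and injectivity, and for the converse half of (iii); but it takes a \emph{different route} for the forward half of (iii)(a), and that route has a gap. The paper shows the pole of $L^S(s,\Pi,\bigwedge^2\otimes\mu^{-1})$ at $s=1$ by exploiting the theta correspondence directly: by unramified functoriality (Prop.~\ref{P:unram}) the degree~$6$ standard $L$-function satisfies $L^S(s,\Theta(\pi),std) = \zeta^S(s)\cdot L^S(s,\pi,std)$, and since $L^S(s,\Theta(\pi),std) = L^S(s,\Pi,\bigwedge^2\otimes\mu^{-1})$ and $L^S(s,\pi,std)$ is nonvanishing at $s=1$, the pole is forced. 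You instead invoke Jacquet--Shalika's Rankin--Selberg factorization $L^S(s,\Pi\times\Pi^\vee) = L^S(s,\Pi,\bigwedge^2\otimes\mu^{-1})\cdot L^S(s,\Pi,Sym^2\otimes\mu^{-1})$ and then assert that the pole must fall in the exterior-square factor ``since $\Pi$ is of symplectic type,'' citing as justification the local fact that each $\phi_{\Pi,v}$ preserves a symplectic form up to $\mu_v$.

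That last step is where the gap lies. Knowing that every local parameter $\phi_{\Pi,v}$ factors through $\GSp_4(\mathbb{C})$ does \emph{not}, by itself, determine which of the two twisted $L$-functions carries the global pole: a $4$-dimensional local parameter (in particular an unramified one) can perfectly well preserve both a symplectic and an orthogonal form up to $\mu_v$, so there is no local contradiction with a pole appearing in $L^S(s,\Pi,Sym^2\otimes\mu^{-1})$. Deducing the global dichotomy (``pole in the exterior square'' versus ``pole in the symmetric square'') from local symplecticity is precisely the content of the symplectic/orthogonal dichotomy theorem for self-dual cuspidal representations of $\GL_n$ --- a nontrivial result that is neither proved in this paper nor an easy consequence of what is available here. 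The paper's degree-$6$ argument sidesteps this entirely and is, for that reason, the right move. A secondary remark: in the converse direction of (iii)(a) you gesture toward \cite{GRS} for nonvanishing, cuspidality and genericity of the descent, but the paper's mechanism is more specific --- it uses Jacquet--Shalika to convert the pole into a nonzero \emph{Shalika period} of $\Pi$ with respect to $\mu$, and then expresses the Whittaker coefficient of $\Theta(\Pi\boxtimes\mu)$ in terms of that Shalika period (cf.\ \cite{So} and \cite[Prop.~3.1]{GT1}); you correctly flag this as the main obstacle, and this is indeed the input that must be supplied.
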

 \vskip 5pt

We remark that (i), (ii) and parts of (iii) constitute the main theorem of \cite{ASh} and are proved by entirely different methods there. More precisely, [ASh] showed that the image of the global generic lifting is contained in the set of automorphic representations satisfying the conditions (a) and (b) of (iii). Thus the new results in our theorem are (iv) and the other half of (iii), namely that any automorphic representation satisfying (a) and (b) of (iii) is in the image.

\vskip 5pt

The rest of the section is devoted to the proof of the theorem. By the local results of this paper, the local theta lift for $\GSp_4 \times \GSO(V)$ satisfies Howe's conjecture. The analogous result at archimedean places has been proved by Adams-Barbasch \cite{AB} over $\C$ and by A. Paul [Pa] over $\R$ (suitably extended to the similitude case).
Thus the map $\pi \mapsto \Theta(\pi) = \Pi \boxtimes\mu$ is injective.
Since the local theta correspondence preserves central characters, we immediately deduce (i).
Moreover, for a finite place, (iv)
 follows directly from our definition of $L$-parameters for $\GSp_4$ and Thm. \ref{T:summary}(iii)(b). For an infinite place, (iv) follows by \cite{AB} and \cite{Pa}. Thus, the weak lifting is strong.
\vskip 5pt

By (iv) and the property of the local Langlands correspondence for $\GSp_4$ proved in this paper, one knows that for each place $v$, $\phi_{\Pi,v}$ factors through $\GSp_4(\C)$ with similitude character $\mu_v$. So we have $\phi_{\Pi,v} \cong \phi_{\Pi,v}^{\vee} \otimes \mu_v$.
By the properties of the local Langlands correspondence for $\GL_4$, one deduces that
\[ \Pi_v \cong \Pi_v^{\vee}  \otimes \mu_v. \]
 This proves (ii).
\vskip 5pt

Finally we come to (iii). By the functoriality of the theta correspondence for unramified representations, we see that the degree 6 (partial) standard $L$-function of $\Theta(\pi) = \Pi \boxtimes \mu$ admits a factorization:
\[  L^S(s, \Theta(\pi), std) = \zeta^S(s) \cdot L^S(s, \pi, std). \]
Since $L^S(s, \Theta(\pi),std) = L^S(s, \Pi, \bigwedge^2 \otimes \mu^{-1})$ and $L^S(s,\pi,std)$
is nonzero at $s=1$, we see that the twisted exterior square $L$-function $L^S(s, \Pi, \bigwedge^2 \otimes \mu^{-1})$ has a pole at $s=1$ for any $\Pi \boxtimes\mu$ in the image of the global theta lift.
\vskip 5pt

To complete the proof of (iii),  suppose first that $\Pi$ is a cuspidal representation of $\GL_4(\A)$ such that
$L^S(s, \Pi, \bigwedge^2 \otimes \mu^{-1})$ has a pole at $s=1$. By a result of Jacquet-Shalika \cite{JS},
this is equivalent to $\Pi$ having nonzero Shalika period with respect to $\mu$.
Now let us consider the global theta lift of
$\Pi \boxtimes \mu$ to $\GSp_4(\A)$, which one can easily check to be cuspidal. If one computes the Whittaker-Fourier coefficient of $\Theta(\Pi \boxtimes\mu)$, one obtains an expression involving the Shalika period of $\Pi$ with respect to $\mu$. A proof of this can be found in \cite{So} and \cite[Prop. 3.1]{GT1}. From this, one concludes that $\pi = \Theta(\Pi \boxtimes \mu)$ is a globally generic cuspidal representation of $\GSp_4(\A)$. Thus, $\Pi \boxtimes \mu = \Theta(\pi)$ is in the image of the global theta lift.

\vskip 5pt

On the other hand, if $\Theta(\pi)= \Pi \boxtimes \mu$ is non-cuspidal, then the global theta lift of $\pi$ to $\GSO_{2,2}(\A) $ is nonzero (by the tower property of theta correspondence). It is easy to check that the theta lift of $\pi$
to $\GSO_{1,1}(\A)$ vanishes; indeed, the local theta lift of a representation of $\GSO_{1,1}$ to $\GSp_4$ is never generic. Thus, $\pi = \Theta(\tau_1 \boxtimes \tau_2)$ for a cuspidal representation $\tau_1 \boxtimes \tau_2$ of $\GSO_{2,2}(\A) $ with $\omega_{\pi} = \omega_{\tau_i}$ so that
$\tau_i = \tau_i^{\vee} \otimes \omega_{\pi}$. Moreover, $\tau_1 \ne \tau_2$, for otherwise
the theta lift of $\tau_1 \boxtimes \tau_2$ to $\GSp_4(\A)$ would not be cuspidal.
Conversely, if $\Pi \boxtimes \mu$ is of the type in (iii)(b), so that $\Pi = \tau_1 \boxplus \tau_2$, then
we may consider the global theta lift of the cuspidal representation $\tau_1 \boxtimes \tau_2$ of
$\GSO_{2,2}(\A) $ to $\GSp_4(\A)$. This gives us a globally generic cuspidal representation $\pi = \Theta(\tau_1 \boxtimes \tau_2)$ of $\GSp_4(\A)$. The theta lift of $\pi$ to $\GSO(V)(\A)$ is then equal to $\Pi \boxtimes \mu$.

 \vskip 5pt

This completes the proof of the Theorem.
\qed
\vskip 10pt

\end{document}